\newtheorem{theorem}{Theorem}
\newtheorem{lemma}{Lemma}
\newtheorem{remark}{Remark}
\newcommand{\Binom}{\mathrm{Bin}}
\tikzset{cross/.style={cross out, draw=black, minimum size=2*(#1-\pgflinewidth), inner sep=0pt, outer sep=0pt},
cross/.default={1pt}}
\tikzstyle{int}=[draw, fill=blue!20, minimum size=2em]
\tikzstyle{dot}=[circle, draw, fill=blue!20, minimum size=2em]
\tikzstyle{dotred}=[circle, draw, fill=red!20, minimum size=2em]
\tikzstyle{init} = [pin edge={to-,thin,black}]
\tikzstyle{initred} = [pin edge={to-,thin,red}]
\tikzstyle{plan}=[draw, fill=blue!20, minimum size=2em, text width=5em, rounded corners,align=center]
\tikzstyle{planwide}=[draw, fill=blue!20, minimum size=2em, text width=8em, rounded corners,align=center]
\tikzstyle{vertexdot}=[circle, draw, fill=white, minimum size=3,inner sep=0pt]
\tikzstyle{vertexdotsolid}=[circle, draw, fill=black, minimum size=3,inner sep=0pt]
\newcommand{\Vertex}{\tikz[scale=0.75]{\draw (0,0) node (zero) [vertexdot] {};}}
\newcommand{\Edge}{\tikz[scale=0.75,baseline={([yshift=-3pt]zero.base)}]{\draw (0,0) node (zero) [vertexdot] {} -- (0.5,0) node (zero) [vertexdot] {};}}
\newcommand{\Halfedge}{\tikz[scale=0.75,baseline={([yshift=-3pt]zero.base)}]{\draw (0,0) node (zero) [vertexdotsolid] {} -- (0.5,0) node (zero) [vertexdot] {};}}
\newcommand{\Fulledge}{\tikz[scale=0.75,baseline={([yshift=-3pt]zero.base)}]{\draw (0,0) node (zero) [vertexdotsolid] {} -- (0.5,0) node (zero) [vertexdotsolid] {};}}
\newcommand{\PathThree}{\tikz[scale=0.75]{\draw (0,0) node (zero) [vertexdot] {} -- (0.5,0) node (zero) [vertexdot] {} -- (1.0,0) node (zero) [vertexdot] {};}}
\newcommand{\PathFour}{\tikz[scale=0.75]{\draw (0,0) node (zero) [vertexdot] {} -- (0.5,0) node (zero) [vertexdot] {} -- (1.0,0) node (zero) [vertexdot] {}-- (1.5,0) node (zero) [vertexdot] {};}}
\newcommand{\Square}{\tikz[scale=0.75,baseline=(zero.base)]{\draw (0,0) node (zero) [vertexdot] {} -- (0.5,0) node[vertexdot] {} -- (0.5,0.5) node[vertexdot] {} -- (0,0.5) node[vertexdot] {} -- cycle;}}
\newcommand{\Triangle}{\tikz[scale=0.75,baseline=(zero.base)]{\draw (0,0) node (zero) [vertexdot] {} -- (0.5,0) node[vertexdot] {} -- (0.25,{0.5*sin(60)}) node[vertexdot] {} -- cycle;}}
\newcommand{\Paw}{\tikz[scale=0.75,baseline=(zero.base)]{\draw (0,0) node (zero) [vertexdot] {} -- (0.5,0) node[vertexdot] {} -- (0.25,{0.5*sin(60)}) node[vertexdot] {} -- cycle; \draw (0.5,0) node[vertexdot] {} -- (1,0) node[vertexdot] {};}}
\newcommand{\Bowtie}{\tikz[scale=0.75,baseline=(zero.base)]{\draw (0,0) node (zero) [vertexdot] {} -- (30:0.5) node[vertexdot] {} -- (-30:0.5) node[vertexdot] {}
-- (150:0.5) node[vertexdot] {} -- (-150:0.5) node[vertexdot] {} -- cycle;}}
\newcommand{\Claw}{\tikz[scale=0.75,baseline=(zero.base)]{\draw (0,0) node (zero) [vertexdot] {} -- (0.5,0) node[vertexdot] {} -- (0.5,0.5) node[vertexdot] {}; \draw (0.5,0) node[vertexdot] {} -- (1,0) node[vertexdot] {};}}
\newcommand{\Wedge}{\tikz[scale=0.75,baseline=(zero.base)]{\draw (0,0) node (zero) [vertexdot] {} -- (0.25,{0.5*sin(60)}) node[vertexdot] {} -- (0.5,0) node[vertexdot] {};}}
\newcommand{\Kfour}{\tikz[scale=0.75,baseline=(zero.base)]{\draw (0,0)--(0.5,0.5); \draw (0.5,0)--(0,0.5); \draw (0,0) node (zero) [vertexdot] {} -- (0.5,0) node[vertexdot] {} -- (0.5,0.5) node[vertexdot] {} -- (0,0.5) node[vertexdot] {} -- cycle; }}
\renewcommand{\Diamond}{\tikz[scale=0.75,baseline=(zero.base)]{\draw (0.5,0)--(0,0.5); \draw (0,0) node (zero) [vertexdot] {} -- (0.5,0) node[vertexdot] {} -- (0.5,0.5) node[vertexdot] {} -- (0,0.5) node[vertexdot] {} -- cycle; }}
\newcommand{\inj}{\mathsf{inj}}
\newcommand{\aut}{\mathsf{aut}}
\newcommand{\Kempty}{K^{\circ}}
\newcommand{\Kfull}{K^{\bullet}}
\renewcommand{\hat}{\widehat}
\renewcommand{\tilde}{\widetilde}
\newcommand{\ehat}{\hat{\sfe}}
\newcommand{\HT}{\mathsf{HT}}
\newcommand{\floor}[1]{{\left\lfloor {#1} \right \rfloor}}
\newcommand{\ceil}[1]{{\left\lceil {#1} \right \rceil}}
\newcommand{\naturals}{\mathbb{N}}
\newcommand{\integers}{\mathbb{Z}}
\newcommand{\Expect}{\mathbb{E}}
\newcommand{\Prob}{\mathbb{P}}
\newcommand{\prob}[1]{\Prob\left[#1\right]}
\newcommand{\pprob}[1]{\Prob[#1]}
\newcommand{\TV}{{\rm TV}}
\newcommand{\iid}{i.i.d.\xspace}
\newcommand{\pth}[1]{\left( #1 \right)}
\newcommand{\qth}[1]{\left[ #1 \right]}
\newcommand{\sth}[1]{\left\{ #1 \right\}}
\newcommand{\ppth}[1]{( #1 )}
\newcommand{\iiddistr}{{\stackrel{\text{\iid}}{\sim}}}
\newcommand{\Var}{\mathsf{Var}}
\newcommand{\Cov}{\mathsf{Cov}}
\newcommand{\Bern}{\mathrm{Bern}}
\newcommand{\Indc}{\mathbbm{1}}
\newcommand{\indc}[1]{\Indc\left\{{#1}\right\}}
\newcommand{\tG}{{\widetilde{G}}}
\newcommand{\tH}{{\widetilde{H}}}
\newcommand{\sfd}{{\mathsf{d}}}
\newcommand{\sfe}{{\mathsf{e}}}
\newcommand{\sfn}{{\mathsf{n}}}
\newcommand{\sft}{{\mathsf{t}}}
\newcommand{\sfv}{{\mathsf{v}}}
\newcommand{\sfw}{{\mathsf{w}}}
\newcommand{\sfN}{{\mathsf{N}}}
\newcommand{\calF}{{\mathcal{F}}}
\newcommand{\calG}{{\mathcal{G}}}
\newcommand{\calK}{{\mathcal{K}}}
\newcommand{\calP}{{\mathcal{P}}}
\newcommand{\s}{\mathsf{s}}
\newcommand{\n}{\mathsf{n}}
\newcommand{\cc}{\mathsf{cc}}
\newcommand{\lcm}{{\mathsf{lcm}}}
\newcommand{\N}{{\mathsf{N}}}
\newcommand{\id}{\mathrm{id}}
\title{Counting Motifs with Graph Sampling}
\date{\today}
\author{Jason M. Klusowski \and Yihong Wu\thanks{The authors are with the Department of Statistics and Data Science, Yale University, New Haven, CT, 06511, emails: \url{jason.klusowski@yale.edu} and \url{yihong.wu@yale.edu}. This research was supported in part by the NSF Grant IIS-1447879, CCF-1527105, and an NSF CAREER award CCF-1651588.}}
\date{\today}
\begin{document}

\maketitle

\begin{abstract}

Applied researchers often construct a network from data that has been collected from a random sample of nodes, with the goal to infer properties of the parent network from the sampled version. Two of the most widely used sampling schemes are \emph{subgraph sampling}, where we sample each vertex independently with probability $p$ and observe the subgraph induced by the sampled vertices, and \emph{neighborhood sampling}, where we additionally observe the edges between the sampled vertices and their neighbors.

In this paper, we study the problem of estimating the number of motifs as induced subgraphs under both models from a statistical perspective. We show that: for parent graph $G$ with maximal degree $d$, for any connected motif $h$ on $k$ vertices, to estimate the number of copies of $h$ in $G$, denoted by $s=\s(h,G)$,  with a multiplicative error of $\epsilon$, 
\begin{itemize}
	\item For subgraph sampling, the optimal sampling ratio $p$ is $\Theta_{k}(\max\{ \ppth{s\epsilon^2}^{-\frac{1}{k}}, \;  \frac{d^{k-1}}{s\epsilon^{2}} \})$, which only depends on the size of the motif but \emph{not} its actual topology. Furthermore, we show that Horvitz-Thompson type estimators are universally optimal for any connected motifs.
	
	\item For neighborhood sampling, we propose a family of estimators, encompassing and outperforming the Horvitz-Thompson estimator and achieving the sampling ratio $O_{k}(\min\{ (\frac{d}{s\epsilon^2})^{\frac{1}{k-1}}, \; \sqrt{\frac{d^{k-2}}{s\epsilon^2}}\})$, which again only depends on the size of $h$. This is shown to be optimal for all motifs with at most $4$  vertices and cliques of all sizes. 
\end{itemize}
The matching minimax lower bounds are established using certain algebraic properties of subgraph counts. These results allow us to quantify how much more informative neighborhood sampling is than subgraph sampling, as empirically verified by experiments on synthetic and real-world data. We also address the issue of adaptation to the unknown maximum degree, and study specific problems for parent graphs with additional structures, e.g., trees or planar graphs.

\end{abstract}

\tableofcontents

\section{Introduction}

Counting the number of features in a graph is an important statistical and computational problem. These features are typically basic local structures like motifs \cite{Alon2002} or graphlets \cite{Prvzulj2004} (e.g., patterns of small subgraphs). Seeking to capture the interactions and relationships between groups and individuals, applied researchers typically construct a network from data that has been collected from a random sample of nodes. This scenario is sometimes due to resource constraints (e.g., massive social network, surveying a hidden population) or an inability to gain access the full population (e.g., historical data, corrupted data). Most of the problems encountered in practice are motivated by the need to infer global properties of the parent network (population) from the sampled version. For specific motivations and applications of statistical inference on sampled graphs, we refer the reader to the 
\cite{Duffield2014,Leskovec2006, Kolaczyk2017} for comprehensive reviews as well as applications in computer networks and social networks. 

From a computational and statistical perspective, it is desirable to design sublinear time (in the size of the graph) algorithms which typically involves random sampling as a primitive to reduce both time and sample complexities. Various sublinear-time algorithms based on edge and degree queries have been proposed to estimate graph properties such as the average degree \cite{Goldreich2008, Feige2006}, triangles \cite{Eden2015}, stars \cite{aliakbarpour2017sublinear}, and more general subgraph counts \cite{Gonen2011}. In all of these works, however, some form of adaptive queries, e.g.~breadth or depth first search, is performed, which can be impractical or unrealistic in the context of certain applications such as social network analysis \cite{Apicella2012} or econometrics \cite{Chandrasekhar2011}, where a  sampled graph is obtained and statistical analysis is to be conducted on the basis of this dataset alone.
In this work, we focus on data arising from specific sampling models, in particular, subgraph sampling and neighborhood sampling \cite{Lovasz12}, 
two of the most popular and commonly used sampling models in part due to their simplicity and ease of implementation. In subgraph sampling, we sample each vertex independently with equal probability and observe the subgraph induced by these sampled vertices. In neighborhood sampling, we additionally observe the edges between the sampled vertices and their neighbors. 
Despite their ubiquity, theoretical understanding of these sampling models in the context of statistical inference and estimation has been lacking. 

In this paper, we study the problem of estimating the counts of various classes of motifs, such as edges, triangles, cliques, and wedges, from a statistical perspective. Network motifs are important local properties of a graph. Detecting and counting motifs have diverse applications in a suite of scientific applications including gene regulation networks \cite{Alon2002},  protein-protein interaction networks \cite{uetz2000comprehensive}, and social networks \cite{wasserman1994social}.
Throughout this paper, motifs will be viewed as \emph{induced subgraphs} of the parent graph. 
For a subgraph $ H $, the number of copies of $H$ contained in $G$ as induced subgraphs 
is denoted by $ \s(H, G) $. 
Many useful graph statistics can often be expressed in terms of induced subgraph counts, e.g., the global clustering coefficients, which is the density of induced open triangles. It is worth pointing out that in some literature motifs are also understood as (not necessarily induced) subgraphs \cite{Alon2002}. 
In fact, it is well-known that the number of a given subgraph can be expressed as a linear combination of induced subgraph counts. 
For instance, if we denote the number of copies of $H$ contained in $G$ as subgraphs by $\n(H,G)$, then for wedges, we have $\n(\Wedge,G) = \s(\Wedge,G) + 3\s(\Triangle, G) $.\footnote{More generally, we have (cf.~\cite[Eq.~(5.15)]{Lovasz12}):
\begin{equation}
\n(H,G) = \sum_{H'} \n(H,H') \s(H',G),
\label{eq:injind}
\end{equation}
where the summation ranges over all simple graphs $H'$ (up to isomorphisms) obtained from $H$ by adding edges.}
For this reason, we focus on counting motifs as induced subgraphs.
Furthermore, while we make no assumption about the connectivity of the parent graph, we focus on motifs being \emph{connected} subgraphs which is the most relevant case for applications. It is a classical result that subgraph count of disconnected subgraphs can be expressed as a fixed polynomials in terms of the connected ones; cf.~\cite{Whitney1932,Lovasz12}.
Additionally, motifs in directed graphs have also been considered \cite{Alon2002}; in this paper we focus on undirected simple graphs.

The purpose of this paper is to develop a statistical theory for estimating motif counts in sampled graph. We will be concerned with both methodologies as well as their statistical optimality, with focus on large graphs and the sublinear sample regime, where only a vanishing fraction of vertices are sampled.
In particular, a few questions we want to address quantitatively are as follows:
\begin{itemize}
	\item How does the sample complexity depend on the motif itself? For example, 
	is estimating the count of open triangles as easy as estimating the closed triangles?
	How does the sample complexity of counting 4-cycles compare with that of counting 4-cliques?
	\item How much of the graph must be observed to ensure accurate estimation? For example, severe under-coverage issues have been observed in the study of protein-protein interaction networks \cite{Han2005}.
\item How much more informative is neighborhood sampling than subgraph sampling from the perspective of reducing the sample complexity?
\item To what extent does additional structures of the parent graph, e.g., tree or planarity, impact the sample complexity?
\end{itemize}

Finally, let us also mention that motif counts e.g.,~triangles \cite{GaoLafferty2017}, wheels \cite{Bickel2011}, and cycles \cite{MosselNeemanSly2015} have been used as useful test statistics for generative network models such as the stochastic block models. Furthermore, edges counts of similarity and dependency graphs have been used in the context of testing and estimating change-point detection \cite{chen2015graph, chu2017asymptotic}.
In this paper we do not assume any generative network model, and the randomness of the problem comes solely from the sampling mechanism.

\subsection{Sampling model} \label{sec:model}

In this subsection, we formally describe the two graph sampling models we will study in the remainder of the paper.



\paragraph{Subgraph sampling.}

Fix a simple graph $ G = (V, E) $ on $ \sfv(G) $ vertices. For $ S \subset V $, we denote by $ G[S] $ the vertex induced subgraph. If $ S $ represents a collection of vertices that are randomly sampled according to a sampling mechanism, we denote $ G[S] $ by $ \tG $. The first and simplest sampling model we consider is the subgraph sampling model, where each vertex is sampled with equal probability. In particular, we sample each vertex independently with probability $p$, where $p$ is called the \emph{sampling ratio} and can be thought of as the fraction of the graph that is observed. Thus, the sample size $|S|$ is distributed as $\Binom(\sfv(G), p) $, and the probability of observing a subgraph isomorphic to $ H $ is equal to
\begin{equation}
\pprob{ \tG \simeq H } = \s(H, G)p^{\sfv(H)}(1-p)^{\sfv(G)-\sfv(H)} .
\label{eq:pmf-bern}
\end{equation}
There is also a variant of this model where exactly $ n = p\sfv(G) $ vertices are chosen uniformly at random without replacement from the vertex set $ V $. In the sublinear sampling regime where $n \ll \sfv(G)$, they are nearly equivalent. 

\paragraph{Neighborhood sampling.} 
In this model, in addition to observing $ G[S] $, we also observe the labelled neighbors of all vertices in $ S $, denoted by $ G\{ S \} $. That is, $ G\{ S \} $ is equal to $ \tG = (V, \widetilde{E}) $, where $ \widetilde{E} = \cup_{ v \in S} \cup_{ u \in N_G(v) } \{u, v\} $ 
together with the colors $ b_v \in \{0,1\}$ for each $ v \in V(\tG) $, indicating which vertices were sampled. We refer to such bicolored graphs as \emph{neighborhood subgraphs}, which is a union of stars with the root vertex of each star colored. This model is also known in the literature as ego-centric \cite{Handcock2010} or star sampling \cite{Capobianco72,Kolaczyk2009}.

In other words, we randomly sample the rows of the adjacency matrix of $ G $ independently with probability $ p $ and then observe the rows together with the row indices. The graph then consists of unions of star graphs (not necessarily disjoint) together with colors indicating the root of the stars. 
Neighborhood sampling operates like subgraph sampling but neighborhood information is acquired for each sampled vertex. Hence neighborhood sampling is more informative in the sense that, upon sampling the same set of vertices, considerably more edges are observed. 
For an illustration and comparison of both subgraph and neighborhood sampling, see \prettyref{fig:example}.
Thus it is reasonable to expect (and indeed we will prove in the sequel) that for the same statistical task, neighborhood sampling typically has significantly lower sample complexity than the subgraph sampling scheme.
Note that in many cases, neighborhood sampling is more realistic than subgraph sampling (e.g., social network crawling), where sampling a vertex means that its immediate connections (e.g., friends list) are obtained for free.

A more general version of the neighborhood sampling model is described by Lov\'asz in \cite[Section 1.7]{Lovasz12}, where each sample consists of a radius-$r$ (labeled) neighborhood rooted at a randomly chosen vertex. Since from a union of marked stars one can disassemble each star individually, our model is equivalent to this one with $r=1$.

It turns out that the knowledge of the colors provides crucial information about the sampled graph and affects the quality of estimation (see \prettyref{app:nocolor}). In practice, the model with labels is more realistic since the experimenter would know which nodes were sampled.
We henceforth assume that all sampled graphs obtained from neighborhood sampling are bicolored, with black and white vertices corresponding to sampled and non-sampled vertices, respectively. For a neighborhood subgraph $ h $, let $ V_{b}(h) $ (resp. $ \sfv_{b}(h) $) denote the collection (resp. number) of black vertices. Suppose $ H $ is a bicolored subgraph of $ G $. Let $ \N(H, G) $ be the number of ways that $ H $ can appear (isomorphic as a vertex-colored graph) in $ G $ from neighborhood sampling with $ \sfv_{b}(H) $ vertices. Thus,
\begin{equation*} 
\pprob{\tG \cong H} = \N(H, G){p^{\sfv_b(H)}q^{\sfv(G)-\sfv_b(H)}}.
\end{equation*}

\begin{figure}[ht]
	\centering
	\subfigure[Parent graph.]%
	{\label{fig:examplea} \includegraphics[width=0.24\textwidth]{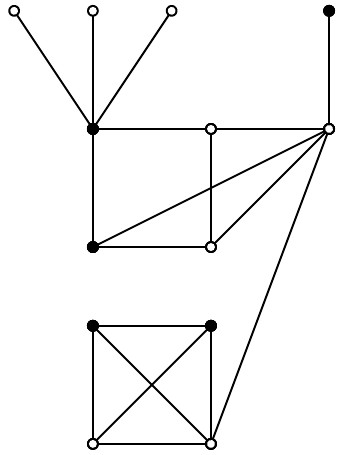}}
         ~~ \hspace{1cm}
	\subfigure[Subgraph sampling.]%
	{\label{fig:exampleb} \includegraphics[width=0.24\textwidth]{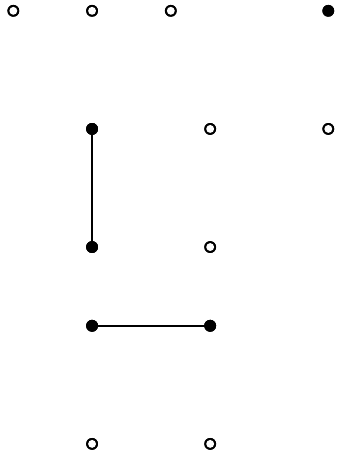}}
         ~~ \hspace{1cm}
	\subfigure[Neighborhood sampling.]%
	{\label{fig:examplec} \includegraphics[width=0.24\textwidth]{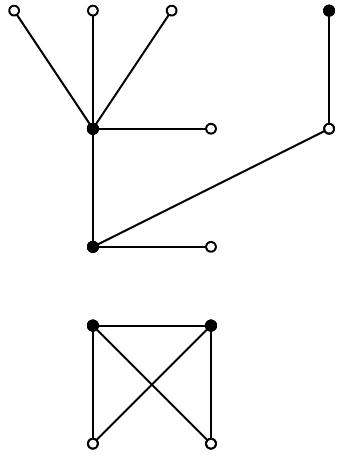}}
\caption{A comparison of subgraph and neighborhood sampling:
Five vertices are sampled in the parent graph, and the observed graph is shown in the middle and right panel for the subgraph and neighborhood sampling, respectively.
}
	\label{fig:example}
\end{figure}


\subsection{Main results}

Let $h$ denote a motif, which is a connected graph on $k$ vertices. As mentioned earlier, we do not assume any generative model or additional structures on the parent graph $G$, except that the maximal degree is at most $d$; this parameter, however, need not be bounded, and one of the goals is to understand how the sample complexity depends on $d$. 
The goal is to estimate the motif count $\s(h,G)$ based on the sampled graph $\tG$ obtained from either subgraph or neighborhood sampling.

Methodologically speaking, Horvitz-Thompson (HT) estimator \cite{HT52} is perhaps the most natural idea to apply here. The HT estimator is an unbiased estimator of the population total by weighting the empirical count of a given item by the inverse of the probability of observing said item. To be precise, consider estimate the edge count in a graph with $m$ edges and maximal degree $d$, the sampling ratio required by the HT estimator to achieve a relative error of $\epsilon$ 
scales as $\Theta(\max\{\frac{1}{\sqrt{m } \epsilon}, \frac{d}{m \epsilon^2} \})$, which turns out to be minimax optimal. 
For $\epsilon$ being a small constant, this yields a sublinear sample complexity when $m$ is large and $m \gg d$. 

For neighborhood sampling, which is more informative than subgraph sampling since more edges are observed, we show that the optimal sampling ratio can be improved to 
$\Theta(\min\{\frac{1}{\sqrt{m } \epsilon}, \frac{d}{m \epsilon^2} \})$, which, perhaps surprisingly, is not always achieved by the HT estimator. 
The main reason for its suboptimality in the high degree regime is the correlation between observed edges. To reduce correlation, we propose a family of  linear estimators encompassing and outperforming the Horvitz-Thompson estimator. The key idea is to use the color information indicating which vertices are sampled. For example, in a neighborhood sampled graph it is possible to observe two types of edges: $\Halfedge$ and $\Fulledge$. The estimator takes a linear combination of the count of these two types of edges with a negative weight on the latter, which, as counterintuitive as it sounds, significantly reduces the variance and achieves the optimal sample complexity. 

For general motifs $h$ on $k$ vertices, for subgraph sampling, it turns out the simple HT scheme for estimating $s = \s(h,G)$ achieves a multiplicative error of $\epsilon$ with the optimal sampling fraction 
\[
\Theta_{k}\pth{\max\sth{ \frac{1}{\pth{s\epsilon^2}^{\frac{1}{k}}}, \;  \frac{d^{k-1}}{s\epsilon^{2}} }},
\]
 which only depends on the size of the motif but \emph{not} its actual topology. 
For neighborhood sampling, the situation is more complicated and the picture is less complete. For general $h$, we propose a family of estimators that achieves the sample ratio:
\[
\Theta_{k}\pth{\min\sth{ \pth{\frac{d}{s\epsilon^2}}^{\frac{1}{k-1}}, \; \sqrt{\frac{d^{k-2}}{s\epsilon^2}}}}
\]
which again only depends on the size of $h$. We conjecture that this is optimal for neighborhood sampling and we indeed prove this for (a) all motifs up to $4$  vertices; (b) cliques of all sizes. 

Let us conclude this part by providing some intuition on proving the impossibility results. The main apparatus is matching subgraph counts: If two graphs have matching subgraphs counts for all induced (resp.~neighborhood) subgraphs up to size $k$, then the total variation of the sampled versions obtained from subgraph (resp.~neighborhood) sampling are at $O(p^k)$. 
At a high level, this idea is akin to the method of moment matching, which haven been widely used to prove statistical lower bound for functional estimation \cite{LNS99,CL11,WY14,JVHW15}; in comparison, in the graph-theoretic context, moments correspond to graph homomorphism numbers which are indexed by subgraphs instead of integers \cite{LS10}. To give a concrete example, consider the triangle motif and take 
\begin{equation}
H=\Paw \qquad H'=\Square
\label{eq:adhocHH1}
\end{equation}
 which have matching subgraph counts up to size two (equal number of vertices and edges) but distinct number of triangles. Then with subgraph sampling, the sampled graph satisfies $\TV(P_{\tH},P_{\tH'}) = O(p^3)$. For neighborhood sampling, we can take
\begin{equation}
H = \tikz[scale=0.75,baseline=(zero.base)]{\draw (0,0) node (zero) [vertexdot] {} -- (0.5,0) node[vertexdot] {} -- (0.25,{0.5*sin(60)}) node[vertexdot] {} -- cycle; \draw (0.5,0) node[vertexdot] {} -- (1,0) node[vertexdot] {} -- (1.5,0) node[vertexdot] {};}
\qquad
H'=
\tikz[scale=0.75,baseline=(zero.base)]{\draw (0,0) node (zero) [vertexdot] {} -- (0.5,0) node[vertexdot] {} -- (0.5,0.5) node[vertexdot] {} -- (0,0.5) node[vertexdot] {} -- cycle; \draw (0.5,0) node[vertexdot] {} -- (1,0) node[vertexdot] {};}
\label{eq:adhocHH2}
\end{equation}
which have \emph{matching degree sequences} $(3,2,2,2,1)$ but distinct number of triangles. 
In general, these pairs of graphs can be either shown to exist by the strong independence of graph homomorphism numbers for connected subgraphs \cite{Erdos1979} or explicitly constructed by a linear algebra argument \cite{Biggs1978}; however, for neighborhood sampling it is significantly more involved as we need to relate the neighborhood subgraph counts to the injective graph homomorphism numbers. Based on these small pairs of graphs, the lower bound in general is constructed by using either $H$ or $H'$ as its connected components.

\subsection{Notations}

We use standard big-$O$ notations,
e.g., for any positive sequences $\{a_n\}$ and $\{b_n\}$, $a_n=O(b_n)$ or $a_n \lesssim b_n$ if $a_n \leq C b_n$ for some absolute constant $C>0$,
$a_n=o(b_n)$ or $a_n \ll b_n$ or if $\lim a_n/b_n = 0$.
Furthermore, the subscript in $a_n=O_{r}(b_n)$ indicates that $a_n \leq C_r b_n$ for some constant $C_r$ depending on $r$ only. For nonnegative integer $ k $, let $ [k] = \{1, \dots, k \} $.

Next we establish some graph-theoretic notations that will be used throughout the paper. Let $ G = (V, E) $ be a simple, undirected graph. Let $ \sfe = \sfe(G) = |E(G)| $ denote the number of edges, $ \sfv = \sfv(G) = |V(G)| $ denote the number of vertices, and $ \cc = \cc(G) $ be the number of connected components in $G$. The open neighborhood of a vertex $ u $ is denoted by $ N_G(u) = \{ v \in V(G) : \{u, v\} \in E(G) \}  $. The closed neighborhood is defined by $ N_G[u] = \{u\}\vee N_G(u) $. Two vertices $ u $ and $ v $ are said to be adjacent to each other, denoted by $ u \sim v $, if $ \{u, v\} \in E(G) $.

Two graphs $ G $ and $ G' $ are isomorphic, denoted by $ G \simeq G' $, if there exists a bijection between the vertex sets of $ G $ and $ G' $ that preserves adjacency, i.e., if there exists a bijective function $ g: V(G) \to V(G') $ such that $ \{g(u), g(v)\} \in E(G') $ whenever $ \{u, v\} \in E(G) $.
If $ G $ and $ G' $ are vertex-colored graphs with colorings $ c $ and $ c' $ (i.e., a function that assigns a color to each vertex), then $ G $ and $ G' $ are isomorphic as vertex-colored graphs, denoted by $ G \cong G' $, if there exist a bijection $ g: V(G) \to V(G') $ such that $ \{g(u), g(v)\} \in E(G') $ whenever $ \{u, v\} \in E(G) $ and $ c(v) = c'(g(v)) $ 
for all vertices $ v \in V(G) $.

Let $ K_{n} $, $ P_{n} $, and $ C_{n} $ denote the complete graph (clique), path graph, and cycle graph on $ n $ vertices, respectively. Let $ K_{n,n'} $ denote the complete bipartite graph (biclique) with $ n n' $ edges and $ n + n' $ vertices. Let $ S_{n} $ denote the star graph $ K_{1, n} $ on $ n+1 $ vertices. 
%
%

Define the following graph operations cf.~\cite{west-book}:
The \emph{disjoint union} of graphs $G$ and $G'$, denoted $ G + G' $, is the graph whose vertex (resp.~edge) set is the disjoint union of the vertex (resp.~edge) sets of $G$ and of $G'$. For brevity, we denote by $ k G $ to the disjoint union of $ k $ copies of $ G $. The \emph{join} of $G$ and $G'$, denoted by $ G \vee G' $, is obtained from the disjoin union $G+G'$ by connecting all $v\in V(G)$ and all $v'\in V(G')$, that is, $G\vee G'= (V(H)\cup V(H'), E(H) \cup E(H') \cup (V(H) \times V(H')) ) $, where $H\simeq G$ and $H'\simeq G'$ and $V(H)$ and $V(H')$ are disjoint.
For example, $n K_1 \vee n' K_1 = K_{n,n'}$.
For $S\subset V(G)$, let $G-S$ denote the resulting graph after deleting all vertices in $S$ and all incident edges, and $G-v \triangleq G-\{v\}$.

We say that $H$ is an (edge-induced) subgraph of $G$, denoted by $H\subset G$, if $V(H)\subset V(G)$ and $E(H)\subset E(G)$.
For any $S \subset V(G)$, the subgraph of $G$ induced by $S$ is denoted by $G[S] \triangleq (S,E(G)\cap S\times S)$.
Let $\s(H,G)$ (resp.~$ \n(H, G) $) be the number of vertex (resp.~edge) induced subgraphs of $G$ that are isomorphic to $H$; in other words,
\begin{align}
\s(H,G) = & ~ \sum_{V\subset V(G)}   \indc{G[V] \simeq H}  \label{eq:ind}\\
\n(H,G) = & ~ \sum_{g \subset G}   \indc{g \simeq H} . \label{eq:inj}
\end{align}
For example, $\s(\PathThree,\; \Diamond) = 2$ and
$\n(\PathThree,\; \Diamond) = 8$.
Let $\omega(G)$ denote the clique number, i.e., the size of the largest clique in $G$.
Let $\sfe(G) = \s(\Edge, \; G) $, $ \sft(G) = \s(\Triangle, \; G) $ and $ \sfw(G) = \s(\Wedge, \; G) $ denote the number of edges, triangles and wedges of $ G $, which are of particular interest.


\subsection{Organization}

The paper is organized as follows. In \prettyref{sec:alg}, we state our positive results in terms of squared error minimax rates and design algorithms that achieve them for subgraph (\prettyref{sec:subgraph}) and neighborhood (\prettyref{sec:neighborhood}) sampling. \prettyref{sec:lower} discusses converse results and states counterpart minimax lower bounds for subgraph (\prettyref{sec:lb-subgraph}) and neighborhood (\prettyref{sec:lb-neighborhood}) sampling. We further restrict the class of graphs to be acyclic or planar in \prettyref{sec:structure} and explore whether such additional structure can be exploited to improve the quality of estimation. In \prettyref{sec:experiments}, we perform a numerical study of the proposed estimators for counting edges, triangles, and wedges on both simulated and real-world data. Finally, in \prettyref{app:appendix}, we prove some of the auxiliary lemmas and theorems that were stated in the main body of the paper.

\section{Methodologies and performance guarantees} \label{sec:alg}

%

\subsection{Subgraph sampling} \label{sec:subgraph}

The motivation for our estimation scheme is based on the observation that any motif count $ \s(h, G) $ can be written as a sum of indicator functions 
as in \prettyref{eq:ind}. 
Note that for a fixed subset of vertices $ T \subset V(G) $, the probability it induces a subgraph in the sampled graph $\tG$ that is isomorphic to $h$ is
\[
 \pprob{\tG[T]\simeq h} = p^{\sfv(h)}\Indc\{ G[T] \simeq h\} .
\] 
In view of \prettyref{eq:ind}, this suggests the following unbiased estimator of $ \s(h, G) $:
\begin{equation}
\hat{\s}_h \triangleq 
 \s(h, \tG)/p^{\sfv(h)}.
\label{eq:HT}
\end{equation}
We refer to this estimator as the Horvitz-Thompson (HT) estimator \cite{HT52} since it also uses inverse probability weighting to achieve unbiasedness.
The next theorem gives an upper bound on the mean-squared error for this simple scheme, 
which, somewhat surprisingly, turns out to be minimax optimal within a constant factor as long as the motif $h$ is \emph{connected}.

\begin{theorem}[Subgraph sampling]
\label{thm:subgraph-main}
Let $h$ be an arbitrary connected graph with $k$ vertices.
Let $ G $ be a graph with maximum degree at most $ d $. 
Consider the subgraph sampling model with sampling ratio $p$.
Then the estimator \prettyref{eq:HT} satisfies
\[
\Expect_G|\widehat{\s}_h-\s(h, G)|^2 \leq  \s(h,G) \cdot k 2^k \left(\frac{1}{p^{k}} \vee  \frac{d^{k-1}}{p}\right).
\]
Furthermore,
\[
\inf_{\widetilde\s} \sup_{\substack{G: ~\sfd(G)\leq d\\ ~~~\s(h,G) \leq s}} \Expect_G|\widetilde{\s} -\s(h,G)|^2 = \Theta_{k} \left( \left(\frac{s}{p^{k}} \vee  \frac{s d^{k-1}}{p}\right) \wedge s^2 \right).
\]
\end{theorem}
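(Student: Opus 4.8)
The plan is to prove the two halves separately: the mean-squared error bound for the Horvitz-Thompson estimator \prettyref{eq:HT}, and the matching minimax lower bound. Since $\hat\s_h$ is unbiased, its MSE equals its variance, which I would first compute exactly. Writing $\s(h,\tG) = \sum_{T} X_T$, where the sum runs over vertex sets $T$ with $G[T]\simeq h$ and $X_T = \indc{T\subseteq S}$ indicates that all $k$ vertices of $T$ are sampled, we have $\Expect X_T = p^k$ and, for two such sets with $|T\cap T'| = j$, $\Cov(X_T, X_{T'}) = p^{2k-j} - p^{2k}$. Hence
\[
\Var(\hat\s_h) = \frac{1}{p^{2k}}\sum_{T,T'}\Cov(X_T,X_{T'}) = \sum_{j=1}^{k}(p^{-j}-1)\,N_j,
\]
where $N_j$ is the number of ordered pairs of $h$-copies meeting in exactly $j$ vertices (pairs with $j=0$ contribute nothing). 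The crux of the upper bound is the counting estimate $N_j \le \s(h,G)\cdot C_k\, d^{k-j}$: fixing one copy $T$ (there are $\s(h,G)$ of them) and the intersection set $A = T\cap T'$ (at most $\binom{k}{j}$ choices), the second copy $T'$ is a connected vertex set of size $k$ containing $A$, and contracting $A$ to a single vertex reduces this to counting connected sets of size $k-j+1$ through a fixed vertex in a graph of maximum degree $\le kd$, of which there are at most $(ekd)^{k-j}$ by the standard bounded-degree estimate. Substituting, and noting that $\sum_{j=1}^k d^{k-j}p^{-j}$ is, up to the factor $k$, the larger of its two endpoints $p^{-k}$ (at $j=k$) and $d^{k-1}p^{-1}$ (at $j=1$), yields the claimed bound $\s(h,G)\,k2^k(p^{-k}\vee d^{k-1}p^{-1})$. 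I note that the refined exponent $d^{k-j}$ is essential here: the crude uniform bound $N_j\lesssim_k s\,d^{k-1}$ would give the strictly larger $d^{k-1}/p^k$.

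For the lower bound I would use the method of two fuzzy hypotheses rather than a plain two-point argument, which I expect to be essential. A naive two-point comparing $G_0,G_1$ that differ only in their $h$-count has $\chi^2(P_{\tG_0}\|P_{\tG_1})=\infty$ (a sampled copy of $h$ can be impossible under one graph), and the Hellinger-based bound yields only $\TV = O(\sqrt{r p^k})$ over $r$ differing components, capping the separation at $\Delta \lesssim p^{-k}$ and hence the risk at $p^{-2k}$, which is too weak once $s \gg p^{-k}$. Instead I would fix a matched pair $(H,H')$ with identical induced-subgraph counts $\s(F,\cdot)$ for every $F$ on at most $k-1$ vertices but $\s(h,H)-\s(h,H') = \Delta_g \ne 0$; the existence of such pairs (including high-degree ones) follows from the algebraic independence of connected-subgraph counts cited in the introduction. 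Take $G$ to be $M$ disjoint gadget slots, each independently equal to $H$ with probability $\theta_b$ and to $H'$ otherwise, giving two priors $\sigma_0,\sigma_1$; because sampling is independent across slots, the data marginals factor as $Q_b^{\otimes M}$ with $Q_b = \theta_b P_{\tH} + (1-\theta_b)P_{\tH'}$.

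The key point is that the mixture regularizes the divergence: in $\chi^2(Q_0\|Q_1)$ the dangerous outcomes (sampled patterns possible under $H$ but not $H'$) now sit over a denominator $\ge \theta_b P_{\tH}$, so that $\chi^2(Q_0\|Q_1) \asymp (\theta_1-\theta_0)^2\,\beta/\theta$ with $\beta = \Prob_{\tH}[\text{distinguishing outcome}]$. Bounding $\TV(Q_0^{\otimes M},Q_1^{\otimes M})^2 \lesssim M\,\chi^2(Q_0\|Q_1)$ and requiring it to be $\le 1/2$, together with the constraint $\s(h,G)\lesssim \Delta_g M\theta \le s$ and a Chebyshev check that $\s(h,G)$ concentrates around its prior mean, I would optimize over $M,\theta$ and the perturbation $\theta_1-\theta_0$: the separation $\Delta = \Delta_g M(\theta_1-\theta_0)$ is maximized at $\Delta^2 \asymp \Delta_g s/\beta$, so the fuzzy-hypotheses bound gives risk $\gtrsim \Delta_g s/\beta$. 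With the small ($O_k(1)$-size) gadget, e.g.\ $H=\Paw$, $H'=\Square$ for $h=\Triangle$, one has $\Delta_g \asymp 1$ and $\beta \asymp p^k$, producing $s/p^k$; with a hub gadget carrying $\asymp d^{k-1}$ copies of $h$ through a single vertex the detection probability saturates as $\beta \asymp p\cdot\min\{1,(dp)^{k-1}\}$, so that for $dp>1$ one gets $sd^{k-1}/p$; and the truncation at $\theta_1/\theta_0-1 \asymp 1$ with $M\theta\asymp s$ yields the trivial cap $s^2$ valid when $sp^k\lesssim 1$. Combining the three gives $\Theta_k((s/p^k \vee sd^{k-1}/p)\wedge s^2)$.

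The step I expect to be the main obstacle is the construction and analysis of the high-degree gadget for the $sd^{k-1}/p$ regime: one must exhibit a pair $(H,H')$ of maximum degree $\le d$ matching all induced counts up to size $k-1$ yet differing by $\asymp d^{k-1}$ in the number of $h$-copies through a common vertex, and then verify that its regularized divergence behaves as $\beta \asymp p\min\{1,(dp)^{k-1}\}$ rather than the $\asymp \Delta_g p^k$ one would obtain from spread-out copies. This is where the algebraic input (the homomorphism-independence and linear-algebra constructions cited above) and a careful divergence computation are required; by contrast, the small-gadget case giving $s/p^k$ is comparatively routine once the fuzzy-hypotheses framework is in place.
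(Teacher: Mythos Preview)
Your upper-bound argument is the same as the paper's: write the variance as $\sum_{j=1}^k (p^{-j}-1)N_j$, bound $N_j \leq \s(h,G)\binom{k}{j}d^{k-j}$ using connectedness of $h$, and observe that the sum is dominated by its endpoints. (The paper's counting is slightly simpler than your contraction argument: once $T$ and the intersection $A\subset T$ are fixed, order the remaining $k-j$ vertices of $T'$ so each is adjacent in $h$ to a previously placed one, giving at most $d^{k-j}$ extensions.) Your fuzzy-hypotheses framework is likewise equivalent to the paper's \prettyref{thm:mainlb}, and your small-gadget case producing $s/p^k$ matches the paper's Case I.

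The gap is in the high-degree regime $p>1/d$. You ask for a pair $(H,H')$ of maximum degree $d$ that \emph{matches all induced counts up to size $k-1$} and simultaneously has detection probability $\beta\asymp p$. These two requirements are incompatible: if the gadget has $\Theta(d)$ vertices and matches counts up to order $k-1$, then the sampled versions are indistinguishable unless at least $k$ vertices are drawn, so $\beta\geq c_k(dp)^k\gg p$ when $dp>1$. The paper does \emph{not} match subgraph counts in this regime. Instead it uses a vertex-blowup plus a coupling argument: expand each vertex of $h$ into an independent set (or clique, depending on whether $h$ has distinct neighborhoods), with one distinguished vertex $u$ blown up to a set $S_u$ of size $\asymp 1/p$ and the others to size $\asymp d$; take $H'$ to be the same construction with $S_u$ isolated. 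Then $H-S_u\simeq H'-S_u$, so the subgraph samples have the same law whenever no vertex of $S_u$ is drawn, giving $\TV\leq 1-(1-p)^{|S_u|}\leq c<1$ directly, with no moment matching. Meanwhile $\s(h,H)-\s(h,H')\asymp d^{k-1}/p$, and \prettyref{thm:mainlb} yields $sd^{k-1}/p$. Your hub-gadget intuition is in fact the $|S_u|=1$ special case of this (giving $\TV\leq p$ and $\Delta_g\asymp d^{k-1}$, with the same final bound), but the indistinguishability comes from $H-v\simeq H'-v$, not from matching counts; drop the latter requirement and the construction goes through. A secondary point you will also need: the paper splits into two sub-cases according to whether $h$ has the ``distinct neighborhoods'' property, since otherwise the independent-set blowup does not force $\s(h,H')=0$.
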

%
%


The above result establishes the optimality of the HT estimator for classes of graphs with degree constraints. 
Since the lower bound construction actually uses instances of graphs containing many cycles, it is a priori unclear whether additional assumptions such as tree structures can help. Indeed, for the related problem of estimating the number of connected components with subgraph sampling, it has been shown that for parent graphs that are forests the sample complexity is strictly smaller \cite{KlusowskiWu2017-cc}. 
Nevertheless, for counting motifs such as edges or wedges, in \prettyref{thm:subgraph-rates-forest} and \prettyref{thm:wedge-rates-forest} we show that the HT estimator \prettyref{eq:HT} cannot be improved up to constant factors even if the parent graph is known to be a forest.

The proof of the lower bound of \prettyref{thm:subgraph-main} is given in \prettyref{sec:lb-subgraph}. Below we prove the upper bound of the variance:
\begin{proof}	
	Since $\hat\s$ is unbiased, it remains to bound its variance. 
	Let $b_v \triangleq \indc{v\in S}$, which are iid as $\Bern(p)$. For any $T\subset V(G)$, let $b_T \triangleq \prod_{v\in T} b_v$.
	Then 
	\[
	\hat\s = p^{-k} \sum_{T \subset V(G)} b_T \indc{G[T] \simeq h}. 
	\]
	Hence
	\begin{align*}
	\Var[\hat\s]
	= & ~ p^{-2k}\sum_{T \cap T' \neq \emptyset} \Cov(b_T,b_{T'}) \indc{G[T] \simeq h, G[T'] \simeq h}  \\
	\leq & ~ 	p^{-2k} \sum_{T \cap T' \neq \emptyset} \Expect[b_{T\cup T'}] \indc{G[T] \simeq h, G[T'] \simeq h}  \\
	=  & ~ 	\sum_{t=1}^k p^{-t}  \sum_{|T \cap T'|=t} \indc{G[T] \simeq h, G[T'] \simeq h}  \\
	\leq & ~ 	\sum_{t=1}^k p^{-t}  \s(h,G) \binom{k}{t}  d^{k-t} 
			\leq  \s(h,G) (2d)^k \cdot k \max\{(pd)^{-k},(pd)^{-1}\},
	\end{align*}
	where the penultimate step follows from the fact that the maximum degree of $G$ is $d$ and, crucially, $h$ is connected.	
\end{proof}

\subsection{Neighborhood sampling}  \label{sec:neighborhood}

Our methodology is again motivated by \prettyref{eq:ind} which represents neighborhood subgraph counts as a sum of indicators. In contrast to subgraph sampling, a motif can be observed in the sampled graph by sampling only some, but not all, of its vertices. For example, we only need to sample one vertex of an edge, or two vertices of a triangle to observe the full motif. More generally, for a subset $ T $ vertices in $ G $, we can determine whether $ H \simeq G[T] $ or not if at least $ \sfv(H) - 1 $ vertices from $ T $ are sampled. This reduces the variance but introduces more correlation at the same time.

Throughout this subsection, the neighborhood sampled graph is again denoted by $\tG = G\{S\}$, and $b_v =\indc{v\in S}$ indicates whether a given vertex $v$ is sampled.


\subsubsection{Edges}
We begin by discussing the Horvitz-Thompson type of estimator and why it falls short for the neighborhood sampling model.
Analogously to the estimator \prettyref{eq:HT} designed for subgraph sampling, 
for neighborhood sampling, we can take the observed number of edges and re-weight it according to the probability of observing an edge. 
Note that with neighborhood sampling, a given edge is observed if and only if at least one of the end points is sampled. Thus, the corresponding Horvitz-Thompson type edge estimator is
\begin{equation}
\hat\sfe_{\HT} = \frac{\sfe(\tilde G)}{p^2+2pq},
\label{eq:HTn-edge}
\end{equation}
which is again an unbiased estimator for $\sfe(G)$. To bound the variance, put $\tau=p^2+2pq \in [p,2p]$ and write
\[
\sfe(\tilde G) = \sum_{A\in E(G)} r_A.
\]
where $A=\{u,v\}$ and $r_A \triangleq \indc{b_u=1 \text{ or } b_v = 1} \sim \Bern(\tau)$.
For another edge $A'=\{v,w\}$ intersecting $A$, we have $\Cov[r_A,r_{A'}]  = \prob{b_v=1 \text{ or } b_u = b_{w}=1} \leq 3 p$, by the union bound.
Thus the number of non-zero covariance terms is determined by $\n(\Wedge,G)$, the number of $\Wedge$ contained in $G$ as subgraphs, and we have
\begin{align}
\Var[\sfe(\tilde G)]
\leq & ~ \sfe(G) \tau +  2 \n(\Wedge,G) (3 p)	\leq 2 \sfe(G) p (1+3d), 
\label{eq:varHT-n1}
\end{align}
where we used the fact that $ \n(\Wedge,G) \leq \sfe(G) d$. Therefore, the variance of the Horvitz-Thompson estimator satisfies
\begin{equation}
\Var[\hat\sfe_{\HT}] \lesssim \frac{\sfe(G) d}{p}.
\label{eq:varHT-n}
\end{equation}
However, as we show next, this estimator is suboptimal when $p > \frac{1}{d}$,  or equivalently, when the maximum degree exceeds $\frac{1}{p}$.
In fact, the bound \prettyref{eq:varHT-n} itself is tight which can been seen by considering a star graph $G$ with $d$ leaves, and the suboptimality of the HT estimator is largely due to the \emph{heavy correlation} between the observed edges. For example, for the star graph, the correlation is introduced through the root vertex, since with probability $p$ 
we observe a full star, and with probability $q$ a star with $\Binom(d,p)$ number of black leaves.
Thus, the key observation is to incorporate the colors of the vertices to reduce (or eliminate) correlation.


Next, we describe a class of estimators, encompassing and improving the Horvitz-Thompson estimator. Consider
\begin{equation} \label{eq:edge-estimator}
\widehat{\sfe} = \sum_{A \in E(\tG)} \mathcal{K}_{A},
\end{equation}
where $ \mathcal{K}_{A} $ has the form
\begin{equation}
\mathcal{K}_{A} = b_{u}(1-b_{v})f(d_{u}) + b_{v}(1-b_{u})f(d_{v}) + b_{u}b_{v}g(d_{u}, d_{v});
\label{eq:kernel}
\end{equation}
here $ A = \{u, v\} $ and $ f $ and $ g $ are functions of the degree of sampled vertices. For the neighborhood sampling model, this estimator is well-defined since the degree of any sampled vertex is observed without error.
It is easy to see that
\begin{equation}
\Expect[\widehat{\sfe}] = \sum_{\{u, v\} \in E(G)} [pq(f(d_u) + f(d_v)) + p^2g(d_u, d_v)].
\label{eq:emean}
\end{equation}

For simplicity, next we choose $ f $ and $ g $ to be constant; in other words, we do not use the degree information of the sampled vertices. 
This strategy works as long as the maximal degree $d$ of the parent graph is known. To illustrate the main idea, we postpone the discussion on adapting to the unknown $d$ to \prettyref{sec:adaptive}.
With $ f \equiv \alpha $ and $ g \equiv \beta $, the estimator \prettyref{eq:edge-estimator} reduces to
\begin{equation}
\widehat{\sfe} = \alpha \N(\Halfedge,\tG) + \beta \N(\Fulledge,\tG),
\label{eq:edge-estimator1}
\end{equation}
which is a linear combination of the counts of the two types of observed edges.
In contrast to the HT estimator \prettyref{eq:HTn-edge} which treats the two types of edges equally, the optimal choice will weigh them differently. Furthermore, somewhat counter-intuitively, the weights can be negative, which serves to reduce the correlation.

\renewcommand{\arraystretch}{1.8}
\begin{table}[ht]
\centering
\caption{Probability mass function of $\calK_A\calK_{A'}$ for two distinct intersecting edges (excluding zero values).}
\begin{tabular}{c|c|c|c|c}
\hline
\hline 
Graph 
& \tikz[scale=0.75,baseline=(zero.base)]{\draw (0,0) node (zero) [vertexdot] {} -- (0.25,{0.5*sin(60)}) node[vertexdotsolid] {} -- (0.5,0) node[vertexdot] {};} 
& \tikz[scale=0.75,baseline=(zero.base)]{\draw (0,0) node (zero) [vertexdot] {} -- (0.25,{0.5*sin(60)}) node[vertexdotsolid] {} -- (0.5,0) node[vertexdotsolid] {};}
& \tikz[scale=0.75,baseline=(zero.base)]{\draw (0,0) node (zero) [vertexdotsolid] {} -- (0.25,{0.5*sin(60)}) node[vertexdot] {} -- (0.5,0) node[vertexdotsolid] {};}
& \tikz[scale=0.75,baseline=(zero.base)]{\draw (0,0) node (zero) [vertexdotsolid] {} -- (0.25,{0.5*sin(60)}) node[vertexdotsolid] {} -- (0.5,0) node[vertexdotsolid] {};}
\\
\hline
Probability & $ pq^2 $ & $ 2p^2q $ & $ p^2q $ & $ p^3 $\\
\hline
Value & $ \alpha^2 $ & $ \alpha\beta $ & $ \alpha^2 $ & $ \beta^2 $\\
\hline
\hline
\end{tabular}
\label{tab:edge-cov}
\end{table}

In view of \prettyref{eq:emean}, one way of making $ \widehat{\sfe} $ unbiased is to set
\begin{equation} \label{eq:unbiased-edge}
pq(f(d_u) + f(d_v)) + p^2g(d_u, d_v) = 2pq\alpha + p^2\beta = 1.
\end{equation}
Since the unbiased estimator is not unique, we set out to find the one with the minimum variance.
Similar to \prettyref{eq:varHT-n1}, we have
\begin{align}
\Var[\widehat{\sfe}] & = \sfe(G)\Var[\mathcal{K}_{A}] + 2\n(\Wedge, G)\Cov[\mathcal{K}_{A},  \mathcal{K}_{A'}] 
\leq \sfe(G) (\Var[\mathcal{K}_{A}] + 2d \Cov[\mathcal{K}_{A},  \mathcal{K}_{A'}]),
\label{eq:var-edge}
\end{align}
where $ A = \{u, v\} $ and $ A' = \{v, w\} $ are distinct intersecting edges. Using \prettyref{tab:edge-cov}, we find
\begin{align*}
\Var[\mathcal{K}_{A}] & = 2pq\alpha^2+p^2\beta^2-1 \\
\Cov[\mathcal{K}_{A},  \mathcal{K}_{A'}] & = \alpha^2(pq^2+p^2q) + p^3 \beta^2 + 2 p^2q \alpha \beta - 1.
\end{align*}
In fact, when the unbiased condition \prettyref{eq:unbiased-edge} is met, the covariance simplifies to $ \Cov[\mathcal{K}_{A},  \mathcal{K}_{A'}]  = \frac{q}{p}(1-p\alpha)^2 \geq 0 $.
Finally, optimizing the RHS of \prettyref{eq:var-edge} over $\alpha,\beta$ subject to the constraint \prettyref{eq:unbiased-edge}, we arrive the following 
performance guarantee for $ \widehat{\sfe} $:
\begin{theorem} \label{thm:edge}
	Set 
	\begin{equation}
	\alpha =  \frac{1+dp}{p(2+(d-1)p)} \qquad \beta =  \frac{1-d(1-2p)}{p(2+(d-1)p)}.
	\label{eq:alphabetaopt}
	\end{equation}
Then
\begin{equation} \label{eq:edge-var}
\Expect[(\widehat{\sfe}-\sfe(G))^2] = 
\Var[\widehat{\sfe}] 
\leq \frac{\sfe(G) (d+1)q^2}{p(2+(d-1)p)} 
\lesssim \sfe(G)\left(\frac{1}{p^2} \wedge \frac{d}{p}\right).
\end{equation}
Furthermore, if $p$ is bounded from one, then
\[
\inf_{\widehat{\sfe}} \sup_{\substack{G: ~\sfd(G)\leq d\\ ~~~\sfe(h,G) \leq m}} 
\Expect_G|\widehat{\sfe}-\sfe(G)|^2 = \Theta \left( \left(\frac{m d}{p} \wedge  \frac{m}{p^2}\right) \wedge m^2 \right)
\]
\end{theorem}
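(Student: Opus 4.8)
The upper bound in \eqref{eq:edge-var} is essentially finished by the variance computation preceding the statement; what remains is the one–variable optimization and a case split. Using the unbiasedness constraint \eqref{eq:unbiased-edge} to eliminate $\beta=(1-2pq\alpha)/p^2$, the bracket $\Var[\mathcal{K}_A]+2d\,\Cov[\mathcal{K}_A,\mathcal{K}_{A'}]$ appearing in \eqref{eq:var-edge} becomes a convex quadratic in the single variable $\alpha$ (recall that once \eqref{eq:unbiased-edge} holds one has $\Cov[\mathcal{K}_A,\mathcal{K}_{A'}]=\frac{q}{p}(1-p\alpha)^2\ge 0$). Setting its derivative to zero gives the minimizer, which I would check equals the $\alpha$ in \eqref{eq:alphabetaopt}, and hence $\beta$ via the constraint. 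Substituting back yields the closed form $\frac{(d+1)q^2}{p(2+(d-1)p)}$ per edge, and multiplying by $\sfe(G)\le m$ gives the stated variance. The simplification $\lesssim \frac{1}{p^2}\wedge\frac{d}{p}$ follows by splitting cases: when $pd\le 1$ one has $p(2+(d-1)p)\asymp p$, so the bound is $\asymp \frac{d}{p}$, while when $pd\ge 1$ one has $2+(d-1)p\asymp dp$, so the bound is $\asymp \frac{d}{dp^2}=\frac{1}{p^2}$; here $q\asymp 1$ uses that $p$ is bounded away from one. Comparing finally with the trivial estimator $\widehat{\sfe}\equiv 0$ (whose risk is $\sfe(G)^2\le m^2$) produces the upper bound $(\frac{md}{p}\wedge\frac{m}{p^2})\wedge m^2$.

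For the matching lower bound I would use Le Cam's two–point method: for any parent graphs $G_0,G_1$ with $\sfd\le d$ and $\sfe\le m$, the minimax risk is at least $\frac14|\sfe(G_0)-\sfe(G_1)|^2(1-\TV(P_0,P_1))$, where $P_i$ is the law of the neighborhood–sampled graph of $G_i$. The target rate is then obtained by exhibiting, in each of three regimes, a pair with edge gap $\Delta$ equal to the square root of the desired rate and $\TV(P_0,P_1)\le\frac12$: (a) when $pd\le 1$, near-$d$-regular graphs on $\Theta(m/d)$ vertices with a perturbation of $\Delta\asymp\sqrt{md/p}$ edges; (b) when $pd\ge 1$, high-degree building blocks such as disjoint stars $S_d$ or near-cliques hiding a gap $\Delta\asymp\sqrt{m}/p$; and (c) for $p\lesssim\min(m^{-1/2},d/m)$, a sparse pair (e.g.\ the empty graph versus $\Theta(m)$ disjoint edges) whose sampled versions are indistinguishable, yielding the $m^2$ cap. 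The assumption that $p$ is bounded away from one guarantees $q\gtrsim 1$, so sampling does not reveal the whole graph and the total-variation bounds are non-degenerate.

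The conceptual point, and the main obstacle, is that unlike higher-order motifs the edge count is a \emph{first-order} functional: it is pinned down by the degree sequence, which already appears at order $p$ in the sampled law because each sampled vertex reveals its exact degree. Consequently the matching-subgraph-count device used for larger motifs — matching neighborhood counts up to size $k$ to force $\TV=O(p^{k})$ — cannot separate edge counts, since matching degree sequences forces equal edge counts. One must instead construct $G_0,G_1$ whose \emph{full} neighborhood-sampled laws stay within $\TV\le\frac12$ while their total degrees differ by $2\Delta$, i.e.\ hide a total-degree shift of order $\Delta$ against the fluctuations of a $p$-subsample of the degrees; the heuristic is the balance $p\,\Delta\lesssim\sqrt{p\sum_v d_v^2}\asymp\sqrt{p\,md}$, which reproduces $\Delta\asymp\sqrt{md/p}$. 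To make this rigorous I would take disjoint unions of $N$ independent random components whose edge counts are drawn from two laws $\pi_0,\pi_1$ with different means, so that the per-component observation (the center's color together with a $\Binom(\cdot,p)$-thinned neighbor count) reduces the problem to estimating $\Expect_\pi[\cdot]$ from Binomially thinned samples; I would then bound the single-component squared Hellinger distance and tensorize, $\TV(P_0^{\otimes N},P_1^{\otimes N})^2\lesssim N\,\hellinger^2(P_0,P_1)$, choosing $\Delta$ so the product stays below $\frac12$. The delicate step is that with probability $p$ a sampled center reveals the exact count, hence $\pi$ itself; the two mixtures must therefore be matched strongly enough in their thinned marginals that this leakage does not inflate the Hellinger distance, and controlling this leakage term is where the construction must be carried out carefully.
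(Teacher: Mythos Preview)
Your upper-bound argument is correct and is exactly what the paper does: the optimization leading to \eqref{eq:alphabetaopt} and the variance bound \eqref{eq:edge-var} are carried out in the text immediately preceding \prettyref{thm:edge}, and the case split $pd\lessgtr 1$ is the right way to read off the $\frac{1}{p^2}\wedge\frac{d}{p}$ rate.

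For the lower bound, however, you have misdiagnosed the obstacle and as a result propose machinery that is not needed. The paper obtains the edge lower bound as the $\omega=2$ instance of the clique lower bound (\prettyref{thm:neighborhood_main}), which in turn rests on the amplification device \prettyref{thm:mainlb} together with \prettyref{lmm:counts}. For $\omega=2$ the matching requirement in \prettyref{lmm:general_clique_neighborhood} is at level $\omega-2=0$, i.e.\ it is vacuous: one only needs two graphs on the same number of vertices with different edge counts, e.g.\ an edge versus two isolated vertices, and \prettyref{lmm:counts} already yields $\TV=O(p)$. Blowing up each vertex into an independent set of size $\ell$ (so the pair becomes $K_{\ell,\ell}$ versus $2\ell$ isolated vertices) gives $\TV=O(\ell p)$ and an edge gap of order $\ell^2$; feeding this into \prettyref{thm:mainlb} with $\ell\asymp d\wedge\sqrt{m}$ produces $\frac{md}{p}\wedge m^2$ in the regime $p\le 1/d$. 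For $p>1/d$ one takes $K_{c/p,c/p}$ versus $(2c/p)K_1$ and invokes \prettyref{lmm:lower-complete} to get $\TV\le 1-q^{2c/p}<1$ with an edge gap $\asymp 1/p^2$, which after amplification gives $\frac{m}{p^2}\wedge m^2$.

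The point you flag---that matching degree sequences forces equal edge counts---is true but irrelevant here: the argument never asks for $\TV=O(p^2)$, only $\TV=O(p)$, and for that no degree matching is needed. Your proposed route (random per-component laws, Hellinger tensorization, controlling the ``leakage'' from exactly observed degrees) could plausibly be pushed through, but it is substantially more delicate than the paper's two-line construction, and the step you yourself identify as delicate---balancing the $p$-probability exact-degree observation against the thinned marginal---is precisely what the paper avoids by not trying to match at level one at all.
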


The optimal weights in \prettyref{eq:alphabetaopt} appear somewhat mysterious. In fact, the following more transparent choice also achieves the optimal risk within constant factors:
	\begin{itemize}
	\item $p \leq 1/d$: we can set either
	$\alpha=  \beta = \frac{1}{p^2+2pq}$ or
	$\alpha=  \frac{1}{2pq}$ and $\beta = 0$, that is, we can use either the full HT estimator \prettyref{eq:HTn-edge}, or the HT estimator restricted to only edges of type $\Halfedge$, which is the more probable one.


		\item $p > 1/d$: we choose $\alpha =  \frac{1}{p}$ and $\beta = \frac{1-2q}{p^2}$. This is the unique weights that simultaneously kill all covariance terms and, at the same time, achieve zero bias.		Note that although zero covariance is always possible, it is at a price of setting $\beta \approx -\frac{1}{p^2}$, which inflates the variance too much when $p$ is small and hence suboptimal when $p \ll \frac{1}{d}$.
	\end{itemize}



It is a priori unclear whether additional structures such as tree or planarity helps for estimating motif counts with neighborhood sampling. 
Nevertheless, for counting edges, in \prettyref{thm:edge-forest} we show that the Horvitz-Thompson estimator \prettyref{eq:HT} can only be marginally improved, in the sense that the lower bound continues to hold up to a sub-polynomial factor $p^{o(1)}$ where $o(1)$ is uniformly vanishing as $p\to 0$.
Similarly, for planar graphs, \prettyref{thm:triangle-planar} shows a similar statement.


\subsubsection{Cliques and general motifs}

For ease of exposition, we start by developing the methodology for estimating cliques counts. Both the procedure and the performance guarantee readily extend to general motifs.

We now generalize the techniques for counting edges to estimate the number of cliques of size $\omega$ in a given graph.
Note that there are two types of colored cliques one observe:
(a) $\Kempty_\omega$: 
all but one vertex are sampled; (b) $\Kfull_\omega$: all vertices are sampled, with the first one being more probable when the sampling ratio is small.
In the case of triangles, we have 
$\Kempty_3=$
\tikz[scale=0.75,baseline=(zero.base)]{\draw (0,0) node (zero) [vertexdotsolid] {} -- (0.25,{0.5*sin(60)}) node[vertexdot] {} -- (0.5,0) node[vertexdotsolid] {} -- cycle-- cycle;}
and 
$\Kfull_3=$
\tikz[scale=0.75,baseline=(zero.base)]{\draw (0,0) node (zero) [vertexdotsolid] {} -- (0.25,{0.5*sin(60)}) node[vertexdotsolid] {} -- (0.5,0) node[vertexdotsolid] {} -- cycle;}.
Analogous to the estimator \prettyref{eq:edge-estimator1}, 
we take a linear combination of these two types of clique counts as the linear estimator:
\begin{equation}
\hat\s = \alpha \sfN(\Kempty_\omega,\tG) + \beta \sfN(\Kfull_\omega,\tG).
\label{eq:cliqueest}
\end{equation}
Similar to the design principles for counting edges, in the low sampling ratio regime $p < \frac{1}{d}$, we implement the Horvitz-Thompson estimator, so that the coefficients scale like $p^{-\omega}$; in the high sampling ratio regime $p > \frac{1}{d}$, we choose a \emph{negative} $\beta$, which scale as $-p^{-2\omega}$, to reduce the correlation between various observed cliques. However, unlike the case of counting edges, we cannot perfectly eliminate all covariance terms but will be able to remove the leading one.

The following result, which includes \prettyref{thm:edge} as a special case ($\omega=2$), gives the performance guarantee of the estimator \prettyref{eq:cliqueest} and establishes its optimality in the worst case:
\begin{theorem}[Cliques]
\label{thm:neighborhood_clique}
Set
\begin{align}
\begin{cases}
  \alpha =  \frac{1}{p^{\omega-1}}, \quad \beta = \frac{1-\omega q}{p^{\omega}}  & \quad \text{if } p > 1/d \\
  \alpha=  \frac{1}{\omega p^{\omega-1}}, \quad \beta = \frac{1}{p^{\omega}}  & \quad \text{if } p \leq 1/d.
  \end{cases}
\label{eq:alphabeta-general}
\end{align}
Then
\[
\Expect_G|\widehat{\s}-\s(K_{\omega}, G)|^2 = 
\Var_G[\widehat{\s}]
\leq \s(K_{\omega}, G) \cdot \omega^3 2^{\omega+1} \left(  \frac{ \sfd(G)}{p^{\omega-1}} \wedge  \frac{\sfd(G)^{\omega-2}}{p^2} \right).
\]
Furthermore,
\[
\inf_{\widetilde\s} \sup_{\substack{G: ~\sfd(G)\leq d\\ ~~~\s(K_\omega,G) \leq s}} 
\Expect_G|\widehat{\s}-\s(K_{\omega}, G)|^2 = \Theta_{\omega} \left( \frac{s d}{p^{\omega-1}} \wedge  \frac{s d^{\omega-2}}{p^2} \wedge s^2 \right)
\]
\end{theorem}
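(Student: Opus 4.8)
The plan is to treat the two assertions---the variance bound for the estimator \prettyref{eq:cliqueest} and the matching minimax rate---separately, reusing the mechanism behind \prettyref{thm:edge} for the former. First I would write $\widehat{\s}=\sum_{T}\mathcal{K}_{T}$, where the sum ranges over the $\s(K_\omega,G)$ vertex sets $T$ inducing a clique and
\[
\mathcal{K}_{T}=\alpha\sum_{v\in T}(1-b_{v})\,b_{T\setminus v}+\beta\, b_{T},\qquad b_{T}\triangleq\prod_{v\in T}b_{v},
\]
recording whether exactly $\omega-1$ of the clique's vertices (an observed $\Kempty_\omega$) or all $\omega$ of them (a $\Kfull_\omega$) are sampled; for $\omega=2$ this is exactly the kernel \prettyref{eq:kernel} with constant $f,g$. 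Since $\Expect[\mathcal{K}_{T}]=\alpha\,\omega q\,p^{\omega-1}+\beta\,p^{\omega}$, the unbiasedness condition is $\alpha\,\omega q\,p^{\omega-1}+\beta\,p^{\omega}=1$, which the weights \prettyref{eq:alphabeta-general} are calibrated to enforce, so that $\Expect_G|\widehat\s-\s(K_\omega,G)|^2=\Var_G[\widehat\s]$.

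For the variance I would expand
\[
\Var_G[\widehat\s]=\sum_{t=1}^{\omega}\ \sum_{|T\cap T'|=t}\Cov[\mathcal{K}_{T},\mathcal{K}_{T'}],
\]
grouping ordered pairs of cliques by the size $t$ of their overlap, the case $t=\omega$ being the diagonal $T=T'$. Because $G$ has maximum degree at most $d$ and a clique is connected, once $t$ shared vertices are fixed each of the remaining $\omega-t$ vertices of $T'$ must be adjacent to them, so the number of overlap-$t$ pairs is at most $\s(K_\omega,G)\binom{\omega}{t}d^{\omega-t}$. The heart of the upper bound is the per-overlap covariance estimate: fixing $W=T\cap T'$ and conditioning on the sampling pattern of $W$, $\Cov[\mathcal{K}_{T},\mathcal{K}_{T'}]$ becomes a low-degree polynomial in $\alpha,\beta$ whose leading (most singular in $1/p$) term is annihilated by the prescribed $\beta$---the precise sense in which the negative weight ``removes the leading covariance.'' Substituting the surviving covariance bounds and summing over $t$, then distinguishing the regimes $p\le 1/d$ (Horvitz--Thompson weights, $\alpha\asymp p^{-(\omega-1)}$, dominant term $d/p^{\omega-1}$) and $p>1/d$ (negative $\beta$, dominant term $d^{\omega-2}/p^{2}$), yields the stated bound $\s(K_\omega,G)\cdot\omega^{3}2^{\omega+1}(\frac{d}{p^{\omega-1}}\wedge\frac{d^{\omega-2}}{p^{2}})$. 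This, together with the trivial constant estimator (error $\lesssim s^2$), gives the achievability half of the minimax statement.

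For the lower bound I would use the two-point (Le Cam) method driven by matching subgraph statistics, as previewed in the introduction. For each nontrivial regime I would build a pair of parent graphs, both with maximum degree at most $d$ and at most $s$ copies of $K_\omega$, as disjoint unions of small gadgets chosen so that (i) all neighborhood subgraph counts up to order $\omega$ agree, forcing the sampled laws to be $O(p^{\omega})$-close in total variation (so the $N$-fold product measures over the components stay $\Omega(1)$-indistinguishable as long as $N$ remains within the budget imposed by $s$), while (ii) the $K_\omega$-counts differ by an amount comparable to the square root of the target risk. Tuning the gadget's internal degree against $d$ and the number of components against $s$ then realizes each of $\frac{sd}{p^{\omega-1}}$ and $\frac{sd^{\omega-2}}{p^{2}}$, with the capped construction producing the $\wedge\, s^2$ term, and Le Cam's inequality converts the separation into the claimed risk.

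The hard part is the lower bound, and within it the gadget construction for neighborhood sampling. Unlike subgraph sampling, where matching induced subgraph counts suffices, here one must match \emph{neighborhood} subgraph counts, which I would translate into statements about injective homomorphism numbers and match up to order $\omega$. Producing, for cliques and in each regime, gadgets that simultaneously (a) equalize all these counts so that $\TV=O(p^{\omega})$, (b) exhibit the prescribed gap in $K_\omega$-count, and (c) respect the degree and total-clique budgets is the delicate combinatorial and linear-algebraic core of the argument; everything else is bookkeeping layered on top of the edge case.
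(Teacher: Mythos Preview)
Your plan for the upper bound is essentially the paper's proof: write $\widehat\s=\sum_T f(T)\indc{G[T]\simeq K_\omega}$ with your kernel $\calK_T=f(T)$, enforce $\omega q p^{\omega-1}\alpha+p^\omega\beta=1$, and bound the number of overlap-$t$ clique pairs by $\s(K_\omega,G)\binom{\omega}{t}d^{\omega-t}$. The paper computes $c_t\triangleq\Cov[f(T),f(T')]$ explicitly for $|T\cap T'|=t$ as $p^{-t}\bigl[\alpha^2 tqp^{2\omega-1}+(1-tq\alpha p^{\omega-1})^2\bigr]-1$; for $p>1/d$ the choice $\alpha=p^{-(\omega-1)}$ makes $c_1=\frac{q}{p}(1-\alpha p^{\omega-1})^2=0$ exactly (this is your ``leading term annihilated''), leaving $c_t\le 2\omega^2 p^{-t}$ for $t\ge2$; for $p\le1/d$ the HT weights give $c_t\le 2p^{-t}$.

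The lower-bound plan has a genuine gap in the regime $p>1/d$. The matching-counts mechanism---which the paper does use for $p\le1/d$, via \prettyref{lmm:general_clique_neighborhood}, matching $\N(h,\cdot)$ only for $\sfv_b(h)\le\omega-2$ to obtain per-gadget $\TV=O(p^{\omega-1})$, not $O(p^\omega)$---cannot reach $\frac{sd^{\omega-2}}{p^2}$ once $p>1/d$. After blowing up the gadget by $\ell$, the bound $\TV=O((\ell p)^{\omega-1})$ forces $\ell\lesssim1/p<d$, and the resulting risk lower bound is at best $\frac{s\ell}{p^{\omega-1}}\le\frac{s}{p^\omega}$, which is strictly smaller than $\frac{sd^{\omega-2}}{p^2}$ precisely when $p>1/d$. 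The paper handles this regime by a different, \emph{coupling}-based construction (\prettyref{lmm:lower-complete}): take $H=G\vee K_{1/p,1/p}$ and $H'=G\vee(2/p)K_1$ with $G$ a complete $(\omega-2)$-partite graph of part size $\ell\asymp d$. Then $\s(K_\omega,H)\asymp\ell^{\omega-2}/p^2$ while $\s(K_\omega,H')=0$, and the two sampled graphs coincide whenever none of the $2/p$ distinguished vertices is sampled, so $\TV\le1-(1-p)^{2/p}$ is bounded away from $1$. This large-gap, constant-$\TV$ pair---rather than a small-gap, small-$\TV$ matching pair---is what delivers the $\frac{sd^{\omega-2}}{p^2}$ term via \prettyref{thm:mainlb}; your plan as stated does not contain this ingredient.
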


\begin{proof}
	Let $b_v \triangleq \indc{v\in S} \iiddistr \Bern(p)$. 
	For any $T\subset V(G)$, let $b_T \triangleq \prod_{v\in T} b_v$.
	Write
	\begin{align}
	\hat \s
	= & ~  \sum_{T \subset V(G)} \alpha \indc{\tilde G\{T\} \simeq \Kempty_\omega} + \beta \indc{\tilde G\{T\} \simeq \Kfull_\omega} \nonumber \\
	= & ~  \sum_{T \subset V(G)} f(T)  \indc{G[T] \simeq K_\omega}, \label{eq:neighborhood-est}
	\end{align}
	where 
	\begin{align*}
	f(T) 	\triangleq \alpha \sum_{v\in T} b_{T\backslash\{v\}} (1-b_v) + \beta b_T.
	\end{align*}
	Similar to \prettyref{eq:unbiased-edge}, enforcing unbiasedness, we have the constraint $\Expect[f(T)]=1$, i.e.,
	\begin{equation}
	\omega p^{\omega-1}q \alpha + p^{\omega} \beta = 1 
	\label{eq:unbiased-nbhd}
	\end{equation}
	Furthermore, whenever $|T\cap T'| = t \in [\omega]$, we have
	\begin{align}
	\Expect[f(T)f(T')]
	= & ~ \alpha^2 \pth{t q p^{2\omega-t-1} + (\omega-t)^2 q^2p^{2\omega-t-2}} + 2(\omega-t) q p^{2\omega-t-1}\alpha\beta + \beta^2p^{2\omega-t}	\label{eq:fcross0}	\\
	= & ~ p^{-t} \qth{\alpha^2 t q p^{2\omega-1} +  \pth{ \alpha (\omega-t) q p^{\omega-1}  + \beta p^{\omega}}^2	} \nonumber \\
	\overset{\prettyref{eq:unbiased-nbhd}}{=} & ~ p^{-t} \qth{\alpha^2 t q p^{2\omega-1} +  \pth{ 1 - t q \alpha p^{\omega-1}}^2	}
	\label{eq:fcross}
	\end{align}
This follows from evaluating the probability of observing a pair of intersecting cliques with two, one, or zero unsampled vertices. For example, the four summands in \prettyref{eq:fcross0}, in the case of $\omega=4$ and $t=2$, correspond to 
\tikz[scale=0.75,baseline=(zero.base)]{\draw (0,0)--(0.5,0.5); \draw (0.5,0)--(0,0.5); \draw (0.5,0)--(1,0.5); \draw (1,0)--(0.5,0.5); \draw (0.5,0)--(0.5,0.5); 
\draw (0,0) node (zero) [vertexdotsolid] {} -- (0.5,0) node[vertexdotsolid] {} -- (1,0) node[vertexdotsolid] {} -- (1,0.5) node[vertexdotsolid] {} -- (0.5,0.5) node[vertexdot] {} -- (0,0.5) node[vertexdotsolid] {} -- cycle; },
\tikz[scale=0.75,baseline=(zero.base)]{\draw (0,0)--(0.5,0.5); \draw (0.5,0)--(0,0.5); \draw (0.5,0)--(1,0.5); \draw (1,0)--(0.5,0.5); \draw (0.5,0)--(0.5,0.5); 
\draw (0,0) node (zero) [vertexdot] {} -- (0.5,0) node[vertexdotsolid] {} -- (1,0) node[vertexdotsolid] {} -- (1,0.5) node[vertexdot] {} -- (0.5,0.5) node[vertexdotsolid] {} -- (0,0.5) node[vertexdotsolid] {} -- cycle; },
\tikz[scale=0.75,baseline=(zero.base)]{\draw (0,0)--(0.5,0.5); \draw (0.5,0)--(0,0.5); \draw (0.5,0)--(1,0.5); \draw (1,0)--(0.5,0.5); \draw (0.5,0)--(0.5,0.5); 
\draw (0,0) node (zero) [vertexdotsolid] {} -- (0.5,0) node[vertexdotsolid] {} -- (1,0) node[vertexdotsolid] {} -- (1,0.5) node[vertexdot] {} -- (0.5,0.5) node[vertexdotsolid] {} -- (0,0.5) node[vertexdotsolid] {} -- cycle; },
\tikz[scale=0.75,baseline=(zero.base)]{\draw (0,0)--(0.5,0.5); \draw (0.5,0)--(0,0.5); \draw (0.5,0)--(1,0.5); \draw (1,0)--(0.5,0.5); \draw (0.5,0)--(0.5,0.5); 
\draw (0,0) node (zero) [vertexdotsolid] {} -- (0.5,0) node[vertexdotsolid] {} -- (1,0) node[vertexdotsolid] {} -- (1,0.5) node[vertexdotsolid] {} -- (0.5,0.5) node[vertexdotsolid] {} -- (0,0.5) node[vertexdotsolid] {} -- cycle; },
 respectively.

Let $c_t \triangleq \Cov[f(T),f(T')] = p^{-t} \qth{\alpha^2 t q p^{2\omega-1} +  \pth{ \alpha (\omega-t) q p^{\omega-1}  + \beta p^{\omega}}^2	}-1$ for $|T\cap T'|=t$.
	Denote by $T_{\omega,t}$ the subgraph correspond to two intersecting $\omega$-cliques sharing $t$ vertices. Then
	\begin{align}
	\Var[\hat\s]
	= & ~ \sum_{T \cap T' \neq \emptyset} \Cov(f(T),f(T')) \indc{G[T] \simeq K_\omega, G[T'] \simeq K_\omega}  \label{eq:var-nbhd}\\
		=  & ~ 	\sum_{t=1}^{\omega} c_t \sum_{|T \cap T'|=t} \sfn(T_{\omega,t}, G)
	\leq \s(K_\omega,G) d^\omega \sum_{t=1}^{\omega} c_t \binom{\omega}{t}  d^{-t}. \nonumber
	\end{align}
Next consider two cases separately.

\paragraph{Case I: $p \leq \frac{1}{d}$.}
In this case we choose $\alpha = \frac{1}{\omega p^{\omega-1}}$ and $\beta = \frac{1}{p^{\omega}}$.
Then $c_t =  p^{-t} ( \frac{tpq}{\omega^2} +  (1 - \frac{t q }{\omega})^2	) \leq 2p^{-t}$. Furthermore, for the special case of $t=\omega$, we have $c_\omega \leq p^{-(\omega-1)}$. Thus,
\begin{equation}
\Var[\hat\s] \leq \s(K_\omega,G) \pth{ d^\omega \sum_{t=1}^{\omega} \binom{\omega}{t}  (pd)^{-t} + p^{-(\omega-1)}} 
\leq \s(K_\omega,G) \omega 2^{\omega+1} d p^{-(\omega-1)}.
\label{eq:cliquevar1}
\end{equation}

\paragraph{Case II: $p \leq \frac{1}{d}$.}
In this high-degree regime, the pairs of cliques sharing one vertex ($t=1$) dominates (i.e., open triangle for counting edge and bowties for counting triangles). Thus our strategy is to choose the coefficients to eliminate the these covariance terms. In fact, \prettyref{eq:fcross} for $t=1$ simplifies wonderfully to
\[
c_1 = \frac{q}{p}(1-\alpha p^{\omega-1})^2.
\]
Thus we choose $\alpha =  \frac{1}{ p^{\omega-1}}$ and $\beta = \frac{1-\omega q}{p^{\omega}}$. Hence $c_t \leq 2 \omega^2 p^{-t}$ for all $t \geq 2$, and
\begin{equation}
\Var[\hat\s] \leq \s(K_\omega,G) d^\omega 2 \omega^2 \sum_{t=2}^{\omega} \binom{\omega}{t}  (pd)^{-t} 
\leq \s(K_\omega,G)  2^{\omega+1} \omega^3 d^{\omega-2} p^{-2}.
\label{eq:cliquevar2}
\end{equation}
Combining \prettyref{eq:cliquevar1} and \prettyref{eq:cliquevar2} completes the proof.
\end{proof}

To extend to general motif $h$ on $k$ vertices, note that in the neighborhood sampled graph, again it is possible to observe fully sampled or partially sampled (with one unsampled vertices) motifs.
Consider the following estimator analogous to \prettyref{eq:cliqueest}:
\begin{equation}
\hat\s_h = \alpha \sfN(h^{\circ},\tG) + \beta \sfN(h^{\bullet},\tG),
\label{eq:hest}
\end{equation}
where 
$\sfN(h^{\circ},\tG)$ is the count of $h$ with all vertices sampled and $\sfN(h^{\circ},\tG)$ is the total count of $h$ with exactly one unsampled vertices. 
For instance, if $h=\Diamond$, then 
$\sfN(h^{\bullet},\tG) = \N(\tikz[scale=0.75,baseline=(zero.base)]{\draw (0.5,0)--(0,0.5); \draw (0,0) node (zero) [vertexdotsolid] {} -- (0.5,0) node[vertexdotsolid] {} -- (0.5,0.5) node[vertexdotsolid] {} -- (0,0.5) node[vertexdotsolid] {} -- cycle; }, G)$
and
$\sfN(h^{\circ},\tG) = \N(\tikz[scale=0.75,baseline=(zero.base)]{\draw (0.5,0)--(0,0.5); \draw (0,0) node (zero) [vertexdotsolid] {} -- (0.5,0) node[vertexdotsolid] {} -- (0.5,0.5) node[vertexdotsolid] {} -- (0,0.5) node[vertexdot] {} -- cycle; }, G) + \N(\tikz[scale=0.75,baseline=(zero.base)]{\draw (0.5,0)--(0,0.5); \draw (0,0) node (zero) [vertexdotsolid] {} -- (0.5,0) node[vertexdotsolid] {} -- (0.5,0.5) node[vertexdot] {} -- (0,0.5) node[vertexdotsolid] {} -- cycle; }, G)$.
This example shows that in general, for motifs with less symmetry, there exist multiple partially sampled motifs and in principle they can be weighted differently. However, in \prettyref{eq:hest} we elect to treat them equally, which turns out to be optimal for a wide class of motifs.
Let us point out that if the parent graph has more structures, e.g., forest, then distinguishing different partially sampled motifs can lead to strict improvement; see \prettyref{thm:wedge-forest}.

The estimator \prettyref{eq:hest} turns out to satisfy the same bound as in the clique case. To see this, note that in \prettyref{eq:var-nbhd}, the covariance terms are given in \prettyref{eq:fcross} which do not depend on the actual motif $h$. Furthermore, the sum of the indicators satisfies the same bound in terms of maximal degree provided that $h$ is connected. Using the same optimized coefficients as in \prettyref{eq:alphabeta-general}, the guarantee in \prettyref{thm:neighborhood_clique} holds verbatim:
\begin{equation} \label{eq:var-general-graph}
\Expect_G|\widehat{\s}_h-\s(h, G)|^2 = 
\Var_G[\widehat{\s}_h]
\leq \s(h, G) \cdot k^3 2^{k+1} \left(  \frac{ \sfd(G)}{p^{k-1}} \wedge  \frac{\sfd(G)^{k-2}}{p^2} \right).
\end{equation}
We conjecture that, similar to \prettyref{thm:subgraph-main}, this rate is optimal as long as the motif $h$ is connected. So far we are able to prove this for cliques of all sizes (\prettyref{thm:neighborhood_main}) and motifs on at most 4 vertices (\prettyref{app:othermotif}).


\subsubsection{Adaptation to the maximum degree} \label{sec:adaptive}
In practice, the bound on the maximum degree $ d $ is likely unknown to the observer and obtaining a consistent estimate might be difficult if the high-degree vertices are rare. For example, in a star, most of the vertices have degree one expect for the root.
Even if a consistent estimate is obtained, it is unclear how to avoid it correlating with the data used to form $ \widehat{\sfe} $. Because $ \widehat{\sfe} $ has the form of a sum, such correlations increase the number of cross terms in its variance decomposition.

To overcome these difficulties, we weight each observed edge according to the size of the neighborhood of its incident vertices. Once a vertex is sampled, its degree is exactly determined and thus incorporating this information does not introduce any additional randomness. This observations leads to the following adaptive estimator which achieves a risk that is similar to the optimal risk in \prettyref{thm:edge}:
\begin{theorem} \label{thm:adaptive}
Let $\ehat$ be given in \prettyref{eq:edge-estimator} with $ f(x) = \frac{px+q}{p(px+2q)} $ and $ g(x,y) = \frac{1-pq(f(x) + f(y))}{p^2} $. Then for any graph $ G $ on $ N $ vertices and maximum degree bounded by $ d $, $ \widehat{\sfe} $ is an unbiased estimator of $ \sfe(G) $ and
\begin{equation*}
\Var[\widehat{\sfe}] \lesssim \frac{Nd}{p^2} \wedge \frac{\sfe(G)d}{p}.
\end{equation*}
\end{theorem}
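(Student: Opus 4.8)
The plan is to verify unbiasedness directly and then control the variance through an exact second-moment computation followed by two complementary summation bounds, one matching $\frac{\sfe(G)d}{p}$ and one matching $\frac{Nd}{p^2}$. Unbiasedness is immediate: writing $\widehat{\sfe} = \sum_{A \in E(G)} \mathcal{K}_A$ with $A=\{u,v\}$ and $\mathcal{K}_A$ as in \prettyref{eq:kernel}, \prettyref{eq:emean} gives $\Expect[\mathcal{K}_A] = pq(f(d_u)+f(d_v)) + p^2 g(d_u,d_v)$, and the prescribed $g(x,y) = \frac{1 - pq(f(x)+f(y))}{p^2}$ forces this to equal $1$ for every edge, so $\Expect[\widehat{\sfe}] = \sfe(G)$ regardless of $f$. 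Since $\mathcal{K}_A$ is a deterministic function of the independent indicators $\{b_u,b_v\}$ (the degrees $d_u,d_v$ are observed exactly once a vertex is sampled), $\Cov[\mathcal{K}_A,\mathcal{K}_{A'}]=0$ whenever $A\cap A'=\emptyset$, so
\[
\Var[\widehat{\sfe}] = \sum_{A\in E(G)} \Var[\mathcal{K}_A] + \sum_{\substack{A\neq A'\\ A\cap A'\neq\emptyset}} \Cov[\mathcal{K}_A,\mathcal{K}_{A'}],
\]
the off-diagonal sum ranging over wedges (pairs of edges sharing one vertex).

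The first real step is to evaluate these two moments exactly. I would introduce the reparametrization $\phi_u \triangleq p f(d_u) = \frac{pd_u+q}{pd_u+2q}\in[\tfrac12,1)$, so that $1-\phi_u = \frac{q}{pd_u+2q}$, and use the unbiasedness identity to eliminate $g$. The diagonal term should reduce to $\Var[\mathcal{K}_A] = \frac{q(\phi_u^2+\phi_v^2)}{p} + \frac{(1-q(\phi_u+\phi_v))^2}{p^2} - 1$. For a wedge with centre $v$ and arms $u,w$, rewriting $\mathcal{K}_{\{u,v\}} = b_uf(d_u)+b_vf(d_v)-b_ub_v h_{uv}$ and expanding the product $\mathcal{K}_{\{u,v\}}\mathcal{K}_{\{v,w\}}$ over the eight values of $(b_u,b_v,b_w)$, I anticipate the contribution of the shared vertex $v$ cancels and the covariance collapses to the clean factored form $\Cov[\mathcal{K}_{\{u,v\}},\mathcal{K}_{\{v,w\}}] = \frac{q}{p}(1-\phi_u)(1-\phi_w)$, specializing to the non-adaptive $\frac{q}{p}(1-p\alpha)^2$ when $f\equiv\alpha$. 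Verifying this factorization cleanly is the crux of the whole argument.

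With these formulas in hand the bound $\frac{\sfe(G)d}{p}$ follows from per-term estimates. Using $\phi_u\in[\tfrac12,1)$ together with the identity $1-q(\phi_u+\phi_v) = p - \frac{q}{2}\bigl(\frac{pd_u}{pd_u+2q}+\frac{pd_v}{pd_v+2q}\bigr)$, one gets $|1-q(\phi_u+\phi_v)|\le\max\bigl(p,\tfrac{p(d_u+d_v)}{4}\bigr)$, whence $\Var[\mathcal{K}_A]\lesssim\frac{d}{p}$: when $p>1/d$ the crude bound $\Var[\mathcal{K}_A]\lesssim p^{-2}\le d/p$ suffices, while when $p\le 1/d$ all degrees lie below $1/p$ and the refined bound gives $\Var[\mathcal{K}_A]\lesssim 1/p + d^2 \lesssim d/p$. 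Summing over the $\sfe(G)$ edges handles the diagonal, and the off-diagonal is bounded using $(1-\phi_u)(1-\phi_w)\le\tfrac14$ and the fact that $G$ has at most $d\,\sfe(G)$ wedges.

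For the bound $\frac{Nd}{p^2}$ the diagonal is immediate from $\Var[\mathcal{K}_A]\lesssim p^{-2}$ and $\sfe(G)\le Nd/2$. The delicate part is the off-diagonal sum, where the degree-adaptive weights pay off. Dropping the $u\neq w$ restriction bounds it by $\frac{q}{p}\sum_v\bigl(\sum_{u\sim v}(1-\phi_u)\bigr)^2$; applying Cauchy--Schwarz and swapping the order of summation yields $\frac{q}{p}\sum_u(1-\phi_u)^2\sum_{v\sim u}d_v \le \frac{qd}{p}\sum_u d_u(1-\phi_u)^2$. Since $d_u(1-\phi_u)^2 = \frac{q^2}{p}\cdot\frac{pd_u}{(pd_u+2q)^2}$ is maximized at $pd_u=2q$ with value $\frac{q}{8p}$, the sum is at most $N\frac{q}{8p}$, giving an off-diagonal total $\lesssim\frac{Nd}{p^2}$. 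Taking the minimum of the two bounds completes the proof. The main obstacle I foresee is precisely the exact covariance computation in the second step: without the cancellation of the centre vertex that produces the factorization $\frac{q}{p}(1-\phi_u)(1-\phi_w)$, the degree-weighted Cauchy--Schwarz estimate driving the $Nd/p^2$ bound would not close.
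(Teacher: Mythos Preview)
Your proposal is correct and structurally matches the paper: you establish unbiasedness via the choice of $g$, obtain the same exact formulas $\Var[\mathcal{K}_A]=\frac{q(\phi_u^2+\phi_v^2)}{p}+\frac{(1-q(\phi_u+\phi_v))^2}{p^2}-1$ and $\Cov[\mathcal{K}_{\{u,v\}},\mathcal{K}_{\{v,w\}}]=\frac{q}{p}(1-\phi_u)(1-\phi_w)$ (the factorization you correctly identify as the crux, and which the paper states with the same dependence on the \emph{arm} vertices only), and then bound the diagonal and off-diagonal sums.

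The one genuine methodological difference is in how the off-diagonal sum is controlled. The paper groups wedges by the pair of arm vertices $(u,v)$, writing the sum as $\frac{q}{p}\sum_{u\neq v} d_{uv}(1-\phi_u)(1-\phi_v)$ where $d_{uv}=|N_G(u)\cap N_G(v)|$; it then drops one factor, uses $\sum_{v}d_{uv}\le d\,d_u$, and applies concavity of $x\mapsto\frac{x}{px+2q}$ (Jensen) to obtain the single interpolating bound $\frac{Ndq}{p}\cdot\frac{\sfe(G)}{p\,\sfe(G)+qN}$, which is at most both $\frac{Nd}{p^2}$ and $\frac{\sfe(G)d}{p}$. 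You instead group by the centre vertex, apply Cauchy--Schwarz to $\bigl(\sum_{u\sim v}(1-\phi_u)\bigr)^2$, swap the order, and use the pointwise maximum of $d_u(1-\phi_u)^2$ to get the $\frac{Nd}{p^2}$ bound, handling the $\frac{\sfe(G)d}{p}$ bound separately via the trivial estimate $(1-\phi_u)(1-\phi_w)\le\tfrac14$. Both arguments are valid; the paper's concavity route is slightly slicker in that it delivers the minimum of the two bounds in one stroke, while your approach is more elementary and arguably more transparent about where the degree-adaptivity is actually used.
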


\begin{remark}
The variance bound from \prettyref{thm:adaptive} is weaker than \prettyref{thm:edge} in the $ p > 1/d $ regime -- $ \frac{Nd}{p^2} $ versus $ \frac{\sfe(G)}{p^2} $. They have the largest disparity when $ G $ consists of $ N/(d+1) $ copies of the star graph $ S_{d+1} $, in which case $ \sfe(G) = Nd/(d+1) $. This is due to the fact that with high probability $ 1-p $, all sampled vertices from $ S_{d+1} $ have degree one. Ideally, we would like to know the degree of the root of the star; however this is impossible unless the root is sampled. Nonetheless, we can still find a good estimate. More generally, in addition to using the degree $ d_u $ from a sampled vertex $ u $, we may modify the estimator to incorporate degree information from a non-sampled vertex via an unbiased estimate, i.e., $ \widehat{d}_u = \frac{|N_{\tG}(u)|}{p} = \sum_{v\in N_G(u)} \frac{b_v}{p} $. For example, we can redefine $ \mathcal{K}_{A} $ from \prettyref{eq:edge-estimator} as
\begin{equation*}
\mathcal{K}_{A} = b_{u}(1-b_{v})f(d_u \vee \widehat{d}_v) + b_{v}(1-b_{u})f(d_u \vee \widehat{d}_v) + b_{u}b_{v}g(d_{u}, d_{v}).
\end{equation*}
\end{remark}


\section{Lower bounds} \label{sec:lower}
Throughout this section we assume that the sampling ratio $p$ is bounded away from one.


\subsection{Auxiliary results}
	\label{sec:aux}

We start with a result which is the general strategy of proving all lower bounds in this paper. A variant of this result was proved in \cite{KlusowskiWu2017-cc} for the Bernoulli sampling model, however, an examination of the proof reveals that the conclusions also hold for neighborhood sampling. In the context of estimating motif counts, the essential ingredients involve constructing a pair of random graphs whose motif counts have different average values, and the distributions of their sampled versions are close in total variation, which is ensured by matching lower-order subgraphs counts in terms of $ \s $ for subgraph sampling or $ \N $ for neighborhood sampling. The utility of this result is to use a pair of smaller graphs (which can be found in an ad hoc manner) to construct a bigger pair of graphs and produce a lower bound that scales with an arbitrary positive integer $ s $.


\begin{theorem}[Theorem 11 in \cite{KlusowskiWu2017-cc}] \label{thm:mainlb}
Let $ f $ be a graph parameter that is invariant under isomorphisms and additive under disjoint union, i.e., $f(G+H)=f(G)+f(H)$. Fix a subgraph $ h $.
Let $d,s, m $ and $ M = s/m $ be integers. Let $ H $ and $ H' $ be two graphs such that $ \s(h, H) \vee \s(h, H') \leq m $ and $\sfd(H)\vee \sfd(H') \leq d$.
Suppose $ M \geq 300 $ and $ \TV(P,P') \leq 1/300 $,
where $P$ (resp.~$P'$) denote the distribution of the isomorphism class of the (subgraph or neighborhood) sampled graph $\tH$ (resp.~$\tH'$).
Let $\tG$ denote the sampled version of $G$ under the Bernoulli or neighborhood sampling models with probability $p$.
Then
\begin{equation}
\inf_{\widehat{f}} \sup_{\substack{G: ~\sfd(G)\leq d\\ ~~~\s(h,G) \leq s}} \Prob_G\Big[|\widehat{f}\big(\tG\big)-f(G)| \geq \Delta \Big] \geq 0.01.
\label{eq:mainlb}
\end{equation}
where
\begin{equation*}
\Delta = \frac{|f(H)-f(H')|}{8}\left({\sqrt{\frac{s}{m\TV(P,P')}}}  \wedge \frac{s}{m}\right).
\end{equation*}
\end{theorem}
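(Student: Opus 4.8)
The plan is to exhibit two priors on the constraint class $\{G : \sfd(G)\le d,\ \s(h,G)\le s\}$ whose sampled laws are statistically indistinguishable yet whose $f$-values are separated by $2\Delta$, and then to invoke Le Cam's two-point (fuzzy hypothesis) reduction. Both priors build $G$ out of $M = s/m$ independent \emph{blocks}, where each block is an independent copy of $H$ with probability $a$ and of $H'$ with probability $1-a$; the only difference is the mixing weight, $a=a_0$ under $\pi_0$ and $a=a_1$ under $\pi_1$, with $x \triangleq |a_0-a_1|$ a tuning parameter. Since each block has $\s(h,\cdot)\le m$ and degree at most $d$, every realization satisfies $\s(h,G)\le Mm=s$ and $\sfd(G)\le d$, so both priors are supported on the class. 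By additivity of $f$, $f(G)$ is a sum of $M$ i.i.d.\ terms with $\Expect_{\pi_b}[f(G)] = M(a_bf(H)+(1-a_b)f(H'))$ and $\Var_{\pi_b}[f(G)]\le \frac{M}{4}(f(H)-f(H'))^2$; in particular the two prior means are separated by $Mx\,|f(H)-f(H')|$.

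First I would bound the total variation between the two observation laws. Because distinct blocks are distinct connected components, both subgraph and neighborhood sampling act independently across blocks, so the law of $\tG$ is a coarsening of the product $\nu_b^{\otimes M}$, where $\nu_b = a_bP+(1-a_b)P'$ is the per-block sampled law. Tensorizing the squared Hellinger distance gives $\hellinger^2(\nu_0^{\otimes M},\nu_1^{\otimes M})\le M\,\hellinger^2(\nu_0,\nu_1)$, and the key gain is that $\nu_0,\nu_1$ are \emph{symmetric perturbations} of the common mixture $\tfrac12(P+P')$: a $\chi^2$ computation yields $\hellinger^2(\nu_0,\nu_1)\lesssim x^2\sum_\tau \frac{(P(\tau)-P'(\tau))^2}{P(\tau)+P'(\tau)}\lesssim x^2\,\TV(P,P')$, which is \emph{quadratic} in $x$ rather than linear. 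Hence $\TV(P_{\pi_0},P_{\pi_1})\le \hellinger(\nu_0^{\otimes M},\nu_1^{\otimes M})\lesssim x\sqrt{M\,\TV(P,P')}$, and choosing $x=\min\{1,\ c/\sqrt{M\,\TV(P,P')}\}$ for a small absolute constant $c$ keeps this below $1/2$.

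It remains to upgrade the mean separation to a high-probability separation of $f(G)$. The prior means differ by $Mx\,|f(H)-f(H')| = \min\{M,\ c\sqrt{M/\TV(P,P')}\}\cdot|f(H)-f(H')|$, whereas the standard deviation under either prior is $O(\sqrt{M}\,|f(H)-f(H')|)$; their ratio is of order $\sqrt{M}\,x\asymp 1/\sqrt{\TV(P,P')}\gtrsim\sqrt{300}$, so by Chebyshev the two laws of $f(G)$ concentrate in disjoint windows whose gap is at least half the mean separation. Setting $\Delta=\frac{|f(H)-f(H')|}{8}\big(\sqrt{s/(m\TV)}\wedge s/m\big)$ and feeding this separation together with the bound $\TV\le 1/2$ into the fuzzy two-point lemma (Tsybakov's reduction) yields $\inf_{\widehat f}\max_b\Expect_{\pi_b}\Prob[|\widehat f(\tG)-f(G)|\ge\Delta]$ bounded below by a universal constant $\ge 0.01$; since $\sup_G\ge\max_b\Expect_{\pi_b}$, this is exactly \prettyref{eq:mainlb}.

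The main obstacle, and the step that fixes the exponent in $\Delta$, is the quadratic-in-$x$ Hellinger bound $\hellinger^2(\nu_0,\nu_1)\lesssim x^2\,\TV(P,P')$: the naive linear estimate $\hellinger^2\le 2\TV(\nu_0,\nu_1)=2x\,\TV(P,P')$ would only give $\Delta\asymp |f(H)-f(H')|/\TV(P,P')$, which is too weak once $\TV(P,P')$ is of constant order. Exploiting that both block-priors sit symmetrically around $\tfrac12(P+P')$ is what produces the $\sqrt{M/\TV}$ rate and lets the argument survive up to $\TV(P,P')\le 1/300$. The only model-specific check is the factorization of the observation law across connected components, which holds verbatim for subgraph and neighborhood sampling since the radius-$1$ neighborhoods probed by the latter never cross between blocks; this is precisely why one statement covers both models, following \cite{KlusowskiWu2017-cc}.
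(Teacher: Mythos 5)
Your proposal is correct and follows essentially the same route as the cited proof of Theorem 11 in \cite{KlusowskiWu2017-cc}: two mixture priors over $M$ i.i.d.\ blocks each equal to $H$ or $H'$, the quadratic-in-$x$ Hellinger/$\chi^2$ bound for symmetric perturbations of $\tfrac12(P+P')$ combined with tensorization, Chebyshev concentration of the additive functional $f(G)$, and a fuzzy two-point reduction. The only implicit assumption worth flagging is that $\s(h,G)\le Mm$ across a disjoint union requires $h$ to be connected, which is the standing assumption in all applications.
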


Next we recall the well-known fact \cite{Whitney1932,Kocay1982} that disconnected subgraphs counts are determined by (fixed polynomials of) connected subgraph counts. The following version is 
from \cite[Corollary 1 and Lemma 9]{KlusowskiWu2017-cc}:
\begin{lemma} \label{lmm:counts}
Let $ H $ and $ H' $ be two graphs with $m$ vertices and $v \leq m$. 
Suppose $ \s(h, H) = \s(h, H') $ for all connected $ h $ with $ \sfv(h) \leq v $. Then $ \s(h, H) = \s(h, H') $ for all $ h $ with $ \sfv(h) \leq v $ and, furthermore,
\[
\TV(P,P') \leq \binom{m}{v+1} p^{v+1},
 \]
where $P$ (resp.~$P'$) denote the distribution of the isomorphism class of the subgraph sampled graph $\tH$ (resp.~$\tH'$) with sampling ratio $p$.
\end{lemma}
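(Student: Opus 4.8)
The plan is to establish the two assertions in order: first the combinatorial claim that agreement of all \emph{connected} induced counts up to size $v$ forces agreement of \emph{all} induced counts up to size $v$, and then the total-variation bound, which will follow almost mechanically from the first claim together with the pmf formula \prettyref{eq:pmf-bern}.

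For the first assertion I would route through homomorphism numbers, exploiting that they are \emph{multiplicative} under disjoint union, $\ho(h_1+h_2,G)=\ho(h_1,G)\,\ho(h_2,G)$. The bookkeeping point is that the four families of functionals $\s(F,\cdot)$, $\n(F,\cdot)$, $\inj(F,\cdot)$, $\ho(F,\cdot)$, indexed by graphs $F$ with $\sfv(F)\le v$, all span the same finite-dimensional space and the changes of basis between them never increase the vertex count: the relation \prettyref{eq:injind} expresses $\n$ through $\s$ using only edge additions (hence preserving $\sfv$ and connectivity), $\inj(F,\cdot)=\aut(F)\,\n(F,\cdot)$, and $\ho(F,G)=\sum_{\pi}\inj(F/\pi,G)$ runs over quotients $F/\pi$ with $\sfv(F/\pi)\le\sfv(F)$, which are connected whenever $F$ is. Chaining these, matching $\s(h,\cdot)$ for all connected $h$ of size $\le v$ first gives matching $\n$, $\inj$, and then $\ho$ for all connected $h$ of size $\le v$; multiplicativity upgrades this to matching $\ho(h,\cdot)$ for \emph{every} $h$ of size $\le v$ by splitting into connected components; and inverting the (vertex-non-increasing, hence size-$\le v$ preserving) change of basis returns matching $\s(h,\cdot)$ for all $h$ of size $\le v$. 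This is precisely the Whitney--Kocay principle \cite{Whitney1932,Kocay1982} that disconnected induced counts are fixed polynomials in the connected ones, localized to bounded size.

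For the total-variation bound I would start from \prettyref{eq:pmf-bern}, which gives $P(F)=\s(F,H)\,p^{\sfv(F)}(1-p)^{m-\sfv(F)}$ and the analogous expression for $P'$, where $F$ ranges over isomorphism classes on at most $m$ vertices. Therefore
\[
\TV(P,P')=\frac12\sum_{F}|\s(F,H)-\s(F,H')|\,p^{\sfv(F)}(1-p)^{m-\sfv(F)}.
\]
By the first assertion all terms with $\sfv(F)\le v$ vanish. Grouping the survivors by $j=\sfv(F)\ge v+1$ and using $|\s(F,H)-\s(F,H')|\le\s(F,H)+\s(F,H')$ together with $\sum_{F:\sfv(F)=j}\s(F,H)=\binom{m}{j}=\sum_{F:\sfv(F)=j}\s(F,H')$ (every $j$-subset of the $m$ vertices induces exactly one class) collapses the inner sum to at most $2\binom{m}{j}$, so
\[
\TV(P,P')\le\sum_{j=v+1}^{m}\binom{m}{j}p^{j}(1-p)^{m-j}=\prob{\Binom(m,p)\ge v+1}.
\]
Finally, a union bound over the $\binom{m}{v+1}$ choices of a fully-sampled $(v+1)$-subset yields $\prob{\Binom(m,p)\ge v+1}\le\binom{m}{v+1}p^{v+1}$, which is the claimed estimate.

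I expect the first assertion to be the main obstacle; the total-variation step is essentially a one-line computation once the counts are known to agree below size $v+1$. The delicate part of the combinatorial claim is verifying that every transformation among the induced, subgraph, injective-homomorphism, and homomorphism counts respects the vertex-count filtration, so that ``up to size $v$'' is a closed, finite-dimensional, and invertible statement; the multiplicativity of $\ho$ over disjoint union is exactly what lets the disconnected case be reduced to the connected one.
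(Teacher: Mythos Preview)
Your proposal is correct. Note, however, that the paper does not actually prove this lemma: it cites the result from \cite[Corollary 1 and Lemma 9]{KlusowskiWu2017-cc} after invoking the Whitney--Kocay principle that disconnected subgraph counts are polynomials in connected ones. So there is no in-paper proof to compare against for the subgraph-sampling version.

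That said, your argument is the standard one and matches in spirit what the paper does for the \emph{neighborhood} analogue (the second lemma labeled \texttt{lmm:counts}). For the combinatorial part, the paper's neighborhood proof runs a direct Kocay-type recursion (\prettyref{lmm:nbhd}) rather than routing through homomorphism numbers and multiplicativity as you do; your route via $\s\to\n\to\inj\to\ho$ and back is cleaner for the uncolored setting since multiplicativity of $\ho$ under disjoint union handles the passage to disconnected motifs in one stroke. For the total-variation part, the paper's neighborhood proof conditions on the number of sampled vertices and couples when that number is at most $v$, obtaining $\TV\le\Prob[\Binom(m,p)\ge v+1]$; your direct computation from \prettyref{eq:pmf-bern}, collapsing $\sum_{F:\sfv(F)=j}\s(F,H)=\binom{m}{j}$, is the same bound written out pointwise. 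Both finish with the same union bound $\Prob[\Binom(m,p)\ge v+1]\le\binom{m}{v+1}p^{v+1}$.
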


The following version is for neighborhood sampling, which will be used in the proof of \prettyref{thm:neighborhood_main}. We need to develop an analogous result that expresses disconnected neighborhood subgraph counts as polynomials of the connected cones. This is done in \prettyref{lmm:nbhd} in \prettyref{app:appendix}.

\begin{lemma} \label{lmm:counts}
Let $ H $ and $ H' $ be two graphs with $m$ vertices and $v \leq m$. 
Suppose $ \N(h, H) = \N(h, H') $ for all connected, bicolored $ h $ with $ \sfv_b(h) \leq v $. Then 
\begin{equation} \label{eq:match}
\N(h, H) = \N(h, H')
\end{equation}
for all $ h $ with $ \sfv_b(h) \leq v $ and, furthermore,
\begin{equation} \label{eq:tv-bound}
\TV(P,P') \leq \binom{m}{v+1} p^{v+1},
\end{equation}
where $P$ (resp.~$P'$) denote the distribution of the isomorphism class of the sampled graph $\tH$ (resp.~$\tH'$) generated from neighborhood sampling with sampling ratio $p$.
\end{lemma}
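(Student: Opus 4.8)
The plan is to prove the two assertions in turn: first that matching all \emph{connected} neighborhood subgraph counts up to $v$ black vertices forces \emph{every} (possibly disconnected) neighborhood subgraph count up to $v$ black vertices to match, i.e.~\prettyref{eq:match}; and then to deduce the total variation estimate \prettyref{eq:tv-bound} from \prettyref{eq:match} by a direct computation with the sampling pmf. The first part is where the real work lies, while the second part mirrors the subgraph-sampling case verbatim.

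For the first part, the key structural input is the colored analogue of the classical fact that disconnected induced subgraph counts are fixed polynomials of connected ones. Concretely, I would invoke \prettyref{lmm:nbhd}, which asserts that for any bicolored neighborhood subgraph $h$ the count $\N(h,\cdot)$ is a polynomial in the counts $\N(h',\cdot)$ ranging over connected bicolored $h'$ with $\sfv_b(h')\le \sfv_b(h)$. Granting this, \prettyref{eq:match} is immediate: since $H$ and $H'$ agree on every connected bicolored $h'$ with $\sfv_b(h')\le v$, they agree on the corresponding polynomials, whence $\N(h,H)=\N(h,H')$ for every $h$ with $\sfv_b(h)\le v$. To establish \prettyref{lmm:nbhd} I would induct on the number of connected components of $h$, and within that on the number of black vertices. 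The engine is a product formula: expanding $\N(h_1,G)\,\N(h_2,G)=\sum_{T_1,T_2}\indc{G\{T_1\}\cong h_1}\indc{G\{T_2\}\cong h_2}$ over pairs of black sets and classifying $(T_1,T_2)$ by how they overlap yields the disjoint, non-interacting term (a fixed multiple of $\N(h_1+h_2,G)$) plus correction terms that are $\N$-counts of graphs obtained by gluing $h_1$ and $h_2$. Each correction either identifies some black vertices, strictly decreasing the number of black vertices, or merely identifies white vertices and/or introduces cross edges, keeping the number of black vertices fixed but strictly decreasing the number of components; the resulting system is triangular and can be inverted to express $\N(h_1+h_2,\cdot)$ through products of connected counts and strictly lower-order terms.

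I expect the gluing bookkeeping to be the main obstacle. Unlike the uncolored induced-subgraph setting, white (unsampled) vertices can be \emph{shared} between the stars of distinct black vertices, so two black sets can interact without overlapping; one must therefore enumerate all admissible identifications of white neighborhoods together with the possible cross edges, all while tracking the colored-isomorphism type. Verifying that every correction strictly decreases either the number of components or the number of black vertices — so that the inclusion--exclusion terminates and the triangular system inverts — is the delicate step.

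For the second part, I would argue exactly as in the subgraph-sampling case. Using $\pprob{\tH\cong h}=\N(h,H)\,p^{\sfv_b(h)}q^{\,m-\sfv_b(h)}$ (with $q=1-p$) and the identical formula for $H'$,
\[
\TV(P,P')=\frac12\sum_{h}\bigl|\N(h,H)-\N(h,H')\bigr|\,p^{\sfv_b(h)}q^{\,m-\sfv_b(h)}.
\]
By \prettyref{eq:match} every term with $\sfv_b(h)\le v$ vanishes, leaving only $\sfv_b(h)\ge v+1$. For each fixed $j$, summing $\N(\cdot,H)$ over all colored isomorphism types with $\sfv_b(h)=j$ recovers $\sum_{\sfv_b(h)=j}\N(h,H)=\binom{m}{j}$, since each $j$-subset $T\subset V(H)$ contributes to exactly one type $H\{T\}$. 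Hence $\sum_{\sfv_b(h)=j}\bigl|\N(h,H)-\N(h,H')\bigr|\le 2\binom{m}{j}$, which gives
\[
\TV(P,P')\le \sum_{j\ge v+1}\binom{m}{j}p^{j}q^{\,m-j}=\prob{\Binom(m,p)\ge v+1}\le \binom{m}{v+1}p^{v+1},
\]
where the final inequality is the union bound over $(v+1)$-subsets, as any outcome with at least $v+1$ sampled vertices contains some all-sampled $(v+1)$-subset. This establishes \prettyref{eq:tv-bound} and completes the plan.
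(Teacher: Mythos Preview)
Your proposal is correct and follows essentially the same approach as the paper: invoke \prettyref{lmm:nbhd} (Kocay's edge theorem for colored graphs) to upgrade connected to disconnected matching, then bound the total variation by the tail $\prob{\Binom(m,p)\ge v+1}$. Your sketch of \prettyref{lmm:nbhd} via the product expansion and triangular inversion is exactly how the paper proves that lemma in the appendix; for the TV bound the paper phrases it as a conditioning argument (conditional on $\ell$ black vertices, $\tH$ is uniform over types with $\sfv_b=\ell$) rather than your direct summation using $\sum_{\sfv_b(h)=j}\N(h,H)=\binom{m}{j}$, but the two computations are equivalent.
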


\begin{proof}
The first conclusion \prettyref{eq:match} follows from \prettyref{lmm:nbhd}. For the second conclusion \prettyref{eq:tv-bound}, we note that conditioned on $ \ell $ vertices are sampled, $ \tH $ is uniformly distributed over the collection of all bicolored neighborhood subgraphs $ h $ with $ \sfv_b(h) = \ell $. Thus,
\begin{equation*}
\prob{\tH \cong h \mid \sfv_b(h) = \ell} = \frac{\N(h, H)}{\binom{m}{\ell}}.
\end{equation*}
By \prettyref{eq:match}, we conclude that the isomorphism class of $ \tH $ and $ \tH' $ have the same distribution provided that no more than $ v $ vertices are sampled. Thus, $ \TV(P_{\tH},P_{\tH'}) \leq \prob{\Binom(m, p) \leq v+1} $, and consequently, $ \prob{\Binom(m, p) \leq v+1} \leq \binom{m}{v+1}p^{v+1} $ follows from a union bound.
\end{proof}

\begin{lemma}
\label{lmm:subgraphmatching}	
	For any connected graph $h$ with $k$ vertices, there exists a pair of (in fact, connected) graphs $H$ and $H'$, such that 
$\s(h,H) \neq \s(h,H')$ and $\s(g,H)=\s(g,H')$ for all connected $g$ with $\sfv(g) \leq k-1$.	
\end{lemma}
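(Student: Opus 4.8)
The plan is to obtain the separating pair from one soft input — the linear independence of the induced‑count functionals $\{\s(g,\cdot): g \text{ connected}\}$ — by exploiting that each such functional is additive under disjoint union, so that "not being determined by lower‑order counts'' becomes a genuine linear‑algebra statement. Two elementary facts drive this. First, for connected $g$ an induced copy of $g$ in a disjoint union lies entirely in one component, whence $\s(g, A+B) = \s(g,A) + \s(g,B)$; thus $G \mapsto \s(g,G)$ is a linear functional on the free $\reals$‑vector space on isomorphism classes of graphs, with disjoint union as addition. Second, the functionals $\{\s(g,\cdot)\}$ are linearly independent: ordering graphs by vertex count, one has $\s(g,g')=\indc{g\simeq g'}$ when $\sfv(g)=\sfv(g')$ and $\s(g,g')=0$ when $\sfv(g)>\sfv(g')$, so evaluating a vanishing combination on the minimal‑vertex‑count graphs appearing in it forces all coefficients at that level to vanish. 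In particular, since $\sfv(h)=k$, the functional $\s(h,\cdot)$ lies outside the span of $\{\s(g,\cdot): g \text{ connected},\ \sfv(g)\le k-1\}$.

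Next I would convert this non‑membership into a finite integer witness. Let $\calC$ be the finite set of connected graphs on at most $k-1$ vertices. By the independence just established (witnessed already by evaluation on the connected graphs in $\calC\cup\{h\}$ themselves), there are finitely many graphs $B_1,\dots,B_r$ for which the matrix $M=(\s(g,B_i))_{g\in\calC\cup\{h\},\,i}$ has full row rank, so the $h$‑row is not in the row span of the $\calC$‑rows. Row/kernel duality over $\mathbb{Q}$ then produces a rational — hence, after clearing denominators, integer — vector $w=(w_i)$ with $\sum_i w_i\,\s(g,B_i)=0$ for every $g\in\calC$ but $\sum_i w_i\,\s(h,B_i)\neq 0$. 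Writing $w=w^{+}-w^{-}$ and setting $H=\sum_i w_i^{+}B_i$, $H'=\sum_i w_i^{-}B_i$ (disjoint unions), additivity yields $\s(g,H)=\s(g,H')$ for all connected $g$ with $\sfv(g)\le k-1$ while $\s(h,H)\neq\s(h,H')$. (For $k\ge 3$ we automatically get $\sfv(H)=\sfv(H')$ and $\sfe(H)=\sfe(H')$, since $K_1,\Edge\in\calC$.) This already settles the statement if one drops the connectedness requirement.

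The remaining, and genuinely harder, point is the parenthetical assertion that $H,H'$ can be taken \emph{connected}. The disjoint‑union witness above is inherently disconnected, and one cannot simply splice in connecting edges without perturbing the lower‑order connected counts (for instance the edge count $\s(\Edge,\cdot)$). Here I would fall back on the external inputs cited in the introduction: the strong independence of connected graph‑homomorphism numbers of Erd\H{o}s--Lov\'asz--Spencer \cite{Erdos1979}, or the explicit linear‑algebra construction of \cite{Biggs1978}, both of which furnish \emph{connected} graphs with prescribed agreement of the low‑order counts. To apply them one first translates between induced counts and (injective) homomorphism numbers through the invertible, vertex‑count‑graded relations — Whitney's theorem, cf.~\prettyref{eq:injind} and \prettyref{lmm:counts} — so that matching all connected homomorphism counts up to size $k-1$ is equivalent to matching all connected induced counts up to size $k-1$. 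I expect this connectification to be the main obstacle: the elementary argument is self‑contained but delivers only disconnected witnesses, whereas securing connectedness requires either the analytic strong‑independence machinery or a careful explicit construction, and in either route one must verify that the construction can be \emph{targeted} at the prescribed motif $h$, rather than merely at some connected graph on $k$ vertices.
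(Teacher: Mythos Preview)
Your approach is essentially the same as the paper's: both establish the disconnected case via linear independence of the functionals $\s(g,\cdot)$ for connected $g$ together with additivity under disjoint union, and both defer the parenthetical connectedness claim to the strong independence result of Erd\H{o}s--Lov\'asz--Spencer~\cite{Erdos1979}. The only difference is cosmetic: the paper takes the building blocks $B_i$ to be precisely the connected induced subgraphs $h_1,\dots,h_m$ of $h$ itself (ordered by edge count), so that the matrix $(\s(h_i,h_j))$ is visibly upper-triangular with nonzero diagonal, making the integer witness $w=\alpha B^{-1}e_m$ explicit rather than obtained abstractly from row/kernel duality.
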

\begin{proof}
	The existence of such a pair $H$ and $H'$ follows from the strong independence\footnote{This means that the closure of the range of their normalized version (subgraph densities) has nonempty interior.} of connected subgraph counts \cite[Theorem 1]{Erdos1979}.
For example, for $h=\Triangle$, we can take the ab hoc construction in \prettyref{eq:adhocHH1},
 which have equal number of vertices and edges but distinct number of triangles.
Alternatively, next we provide an explicit construction using a linear algebra argument which is similar to that of \cite[Theorem 3]{Erdos1979} and \cite[Section 2]{Biggs1978}. 
Let $\{h_1,\ldots,h_m\}$ denote all distinct (up to isomorphism) induced \emph{connected} subgraph of $h$, ordered in increasing number of edges (arbitrarily among graphs with the same number of edges) so that $h_1$ is an isolated vertex and $h_m=h$. Then the matrix $B=(b_{ij})$ with $b_{ij} = \s(h_i,h_j)$ is upper triangular with strictly positive diagonals. Thus $B$ is invertible and the entries of $B^{-1}$ are rational. Let $x = B^{-1} e_m$, where $e_m=(0,\ldots,0,1)$. Then $x_m = 1$ since $b_{mm}=\s(h,h)=1$. Let 
$w = \alpha x \in\integers^m$, where $\alpha \in \naturals$ is the lowest common denominator of the entries of $x$.
Now define $H$ and $H'$ as the disjoint union with weights given by the vector $w$:
\begin{equation}
H = \sum_{i=1}^m \max\{w_i,0\} h_i, \quad H' = \sum_{i=1}^m \max\{-w_i,0\} h_i.
\label{eq:HH-matching}
\end{equation}
By design, any connected induced subgraph of $H$ and $H'$ with at most $k-1$ vertices belongs to $\{h_1,\ldots,h_{m-1}\}$.
For any $1\leq i\leq m-1$, since $h_i$ is connected, we have $\s(h_i,H) - \s(h_i,H') = \sum_{j=1}^m w_j \s(h_i,h_j) = 0$, and $\s(h,H')=0$ and $\s(h,H) = \alpha \geq 1$.
For example, for $h=\Kfour$, such a solution is given by:
\begin{align*}
H = \Kfour + 6 \times \Edge \qquad H' = 4 \times \Triangle + 4 \times \Vertex
\end{align*}
which have matching subgraphs of order three (vertices, edges and triangles). For a construction for general cliques, see \cite[Eq.~(47)]{KlusowskiWu2017-cc}.
\end{proof}

Next we present graph-theoretic results that are needed for proving lower bound under the neighborhood sampling model. First, we relate the neighborhood subgraph counts $\N$ to the usual subgraph $\n$. Since $\N$ is essentially subgraph counts with prescribed degree for the sampled vertices (cf.~\cite[p.~62]{Lovasz12}), this can be done by inclusion-exclusion principle similar to \prettyref{eq:injind} that expresses the induced subgraph counts $\s$ in terms of the subgraph counts $\n$; however, the key difference here is that the size of the subgraphs that appear in the linear combination is not bounded a priori. For example,
\begin{align*}
\N(\tikz[scale=0.75,baseline=(zero.base)]{\draw (0,0) node (zero) [vertexdot] {} -- (0.25,{0.5*sin(60)}) node[vertexdotsolid] {} -- (0.5,0) node[vertexdot] {};},G)
= & ~  \text{number of degree-$2$ vertices in $G$}\\
= & ~ 	\sum_{k\geq 2} (-1)^{k-2}  \binom{k}{2} \n(S_{k+1},G),
\end{align*}
where $S_{k+1}$ is the star graph with $k$ leaves. 
The following lemma is a general statement:

\begin{lemma}
\label{lmm:incexc}	
	Let $h$ be a bicolored connected neighborhood graph and $h_0$ denote the uncolored version. Then for any  $G$, 
	\begin{equation}
	\N(h,G) = \sum_g c(g,h) \n(g,G)
	\label{eq:Ninj}
	\end{equation}
	where the sum is over all (uncolored) $g$ obtained from $h$ by either adding edges incident to the black vertices in $h$ or adding vertices connected to black vertices in $h$.
	In particular, the coefficients  $c(g,h)$ do not depend on $G$.
\end{lemma}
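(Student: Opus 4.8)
The plan is to first reduce the colored count $\N(h,G)$ to a labeled, uncolored injective count, and then to run a single inclusion--exclusion that is simultaneously responsible for (a) the non-edges of $h$ incident to black vertices and (b) the absence of unobserved extra neighbors of the sampled set.

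First I would write $\N(h,G) = A(h,G)/\aut(h)$, where $\aut(h)$ is the number of color-preserving automorphisms of $h$ and $A(h,G)$ counts the injective maps $\psi\colon V(h)\to V(G)$ for which the sampled set $S=\psi(V_b(h))$ reproduces $h$ exactly, i.e. $G\{S\}\cong h$ via $\psi$. The key reformulation is that, because $h$ is a neighborhood subgraph (every edge is black-incident and every white vertex is adjacent to a black vertex), $G\{S\}\cong h$ via $\psi$ holds \emph{if and only if}
\[
N_G(\psi(u)) = \psi(N_h(u)) \qquad \text{for every black vertex } u\in V_b(h).
\]
Indeed, since in neighborhood sampling every edge incident to a sampled vertex is observed while no edge between two unsampled vertices is, this one identity encodes edge preservation for the black-incident edges of $h$ (the ``$\supseteq$'' inclusion), non-adjacency of the images of the black-incident non-edges (the ``$\subseteq$'' inclusion inside $\psi(V(h))$), \emph{and} the crucial closure requirement that no vertex of $V(G)\setminus\psi(V(h))$ be adjacent to $\psi(V_b(h))$; white--white adjacencies are left unconstrained, exactly because such edges are never observed.

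Next I would run inclusion--exclusion on the excess-incidence set $U(\psi) \triangleq \{(u,y): u\in V_b(h),\, y\in N_G(\psi(u))\setminus \psi(N_h(u))\}$, whose emptiness is precisely the displayed condition. Using $\indc{U(\psi)=\emptyset}=\sum_{J\subseteq U(\psi)}(-1)^{|J|}$ and summing over all edge-preserving injective $\psi$ gives
\[
A(h,G) = \sum_{\psi}\ \sum_{J\subseteq U(\psi)} (-1)^{|J|}.
\]
I would then regroup the pairs $(\psi,J)$ by the isomorphism type of the augmented graph $g$ they realize: each internal incidence $(u,\psi(v))$ with $v\in V(h)$ adjoins the black-incident edge $\{u,v\}$ to $h_0$, while each external incidence adjoins a new vertex joined to the black vertex $u$ (several incidences sharing the same external $G$-vertex being glued into a single new vertex). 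This exhibits a bijection between the pairs $(\psi,J)$ and the injective edge-preserving maps of the augmentations $g$, so the alternating sum collapses into a $G$-independent integer combination $A(h,G)=\sum_g (-1)^{\sfe(g)-\sfe(h)}\,\inj(g,G)$; dividing by $\aut(h)$ and using $\inj(g,G)=\aut(g)\,\n(g,G)$ yields \prettyref{eq:Ninj} with coefficients $c(g,h)$ independent of $G$. By construction $g$ ranges exactly over graphs obtained from $h_0$ by adding edges incident to black vertices and/or attaching new vertices to black vertices, and the series is locally finite since $\n(g,G)=0$ once $\sfv(g)>\sfv(G)$.

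The main obstacle is the regrouping step: one must verify that $(\psi,J)\mapsto(g,\phi)$ is a genuine bijection so that no double counting occurs and the coefficients are genuinely $G$-free. The two delicate points are that a single external $G$-vertex may be incident to several black images (which must be glued into one vertex of $g$) and that $\inj(g,G)$ leaves the non-edges of $g$ unconstrained (so distinct subsets $J$ attached to the same $\psi$ are legitimately counted as separate augmentations). Both are handled by bookkeeping incidences rather than edges, in the same spirit as the standard $\s$-to-$\n$ inversion \prettyref{eq:injind}; the only genuinely new feature is that the number of attached vertices---and hence the support of the sum over $g$---is unbounded, as the degree-two vertex example already illustrates.
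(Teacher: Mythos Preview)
Your approach is essentially the same inclusion--exclusion as the paper's, and your reformulation of ``$G\{S\}\cong h$ via $\psi$'' as the neighborhood identity $N_G(\psi(u))=\psi(N_h(u))$ for all black $u$ is correct and arguably cleaner than the paper's setup via the maximal extension $g^*$. The one genuine gap is in the regrouping step: the map $(\psi,J)\mapsto(g,\phi)$ is \emph{not} a bijection, and your two ``delicate points'' miss the reason. If $h$ contains two non-adjacent black vertices $u,u'$ and $\psi(u)\sim\psi(u')$ in $G$, then both $(u,\psi(u'))$ and $(u',\psi(u))$ lie in $U(\psi)$, and the three nonempty subsets $\{(u,\psi(u'))\}$, $\{(u',\psi(u))\}$, $\{(u,\psi(u')),(u',\psi(u))\}$ all produce the \emph{same} augmented pair $(g,\phi)$ with the single new edge $\{u,u'\}$. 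So three distinct $(\psi,J)$'s collapse to one $(g,\phi)$.

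The fix is short: rather than claim a bijection, observe that the incidences in $U(\psi)$ partition according to the potential new edge they realize, with each block having size $1$ (black--white or black--external edge) or size $2$ (black--black edge). The signed sum over nonempty subsets of a block of size $1$ is $-1$; of size $2$ it is $-1-1+1=-1$ as well. Hence for every $(g,\phi)$ with $\phi$ injective and edge-preserving, the signed count of preimages is exactly $\prod_{e\in E(g)\setminus E(h_0)}(-1)=(-1)^{\sfe(g)-\sfe(h)}$, and your stated formula $A(h,G)=\sum_g(-1)^{\sfe(g)-\sfe(h)}\inj(g,G)$ survives. The paper sidesteps this bookkeeping by taking the ground set of the inclusion--exclusion to be \emph{edges} of a maximal extension rather than incidences, so each new edge contributes a single index from the start; your incidence-based version is equally valid once the many-to-one collapse is accounted for.
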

\begin{proof}
	The proof is by the inclusion-exclusion principle and essentially similar to the argument in Section 5.2, in particular, the proof of Proposition 5.6(b) in \cite{Lovasz12}. 
	
	Recall the definition of the subgraph count $\n(H,G)$ in \prettyref{eq:inj} in terms of counting distinct subsets. It will be convenient to work with the labeled version counting graph homomorphisms. The following definitions are largely from \cite[Chapter 5]{Lovasz12}. We say $\psi$ is an injective homomorphism from $H$ to $G$, if $\psi: V(H) \to V(G)$ is injective, and $(u,v)\in E(H)$ if $(\psi(u),\psi(v)) \in E(G) $.
		Denote by $\inj(H, G) $ the number of injective homomorphisms from $H$ to $G$.		Then 
$\inj(H,G)=\n(H,G) \aut(H)$, where $\aut(H)$ denotes the number of automorphisms (i.e.~isomorphisms to itself) for $H$.
Furthermore, for neighborhood subgraph $h$, $\aut(H)$ denotes the number of automorphisms for $h$ that also preserve the colors.
For example, $\aut(\tikz[scale=0.75,baseline=(zero.base)]{\draw (0,0) node (zero) [vertexdotsolid] {} -- (0.5,0) node[vertexdot] {} -- (0.5,0.5) node[vertexdotsolid] {} -- (0,0.5) node[vertexdot] {} -- cycle; })=2$ and $\aut(\Square)=4$.
Throughout the proof, $\psi$ always denotes an injection.

	We use the following version of the inclusion-exclusion principle \cite[Appendix A.1]{Lovasz12}. Let $S$ be a ground set and let $\{A_i: i\in S\}$ be a collection of sets. For each $I\subset S$, define $A_I \triangleq \cap_{i\in I} A_i$ and $B_I \triangleq A_I \backslash \cup_{i\notin I} A_i$; in words, $B_I$ denotes those elements that belong to exactly those $A_i$ for $i\in I$ and none other.
	Then we have
	\begin{align}
|A_I|	= & ~ \sum_{J \subset I} |B_J| \\
|B_I|	= & ~ \sum_{J \subset I} (-1)^{|J|-|I|}|A_J|. \label{eq:incexc}
	\end{align}
	
	Fix $G$. 
	Let $\calG$ denote the collection of (uncolored) subgraphs that are ``extensions'' of $h$, obtained from $h$ by either adding edges between the black vertices in $h$ or adding vertices attached to black vertices in $h$.	
For example, for $h=\Halfedge$, we have $\calG=\{\Edge, \tikz[scale=0.75]{\draw  (0.5,0) node (zero) [vertexdot] {} -- (1.0,0) node (zero) [vertexdot] {};}, 
\tikz[scale=0.75]{\draw  (0.5,0) node (zero) [vertexdot] {} -- (1.0,0) node (zero) [vertexdot] {}; \draw (0.5,0) node (zero) [vertexdot] {} -- (1.0,0.4) node (zero) [vertexdot] {};},
\tikz[scale=0.75]{\draw  (0.5,0) node (zero) [vertexdot] {} -- (1.0,0) node (zero) [vertexdot] {}; \draw (0.5,0) node (zero) [vertexdot] {} -- (1.0,0.4) node (zero) [vertexdot] {}; \draw (0.5,0) node (zero) [vertexdot] {} -- (1.0,0.8) node (zero) [vertexdot] {};}, \cdots\}$ is the collection of all stars.
	Let the $g^*$ be the maximal subgraph of $G$ that is in $\calG$; in other words, $\n(g,G)=0$, for any other $g\in \calG$ containing $g^*$ as a subgraph.
	
	Now we define the ground set to be the edge set of $g^*$.
	Let $h_0$ be the uncolored version of $h$, then $E(h_0)\subset E(g^*)$. 	
	For every $I \subset E(g^*)$, define 
	$A_I \triangleq \{\psi: V(g^*) \to V(g) \colon (\psi(u),\psi(v))\in E(G)  \text { if }  (u,v) \in I\}$ and 
	$B_I \triangleq \{\psi: V(g^*) \to V(g) \colon (\psi(u),\psi(v))\in E(G)  \text { if and only if }  (u,v) \in I\}$.
	The key observation is that 
	$|B_{E(h_0)}|   = \aut(h) \N(h,G)$, and 
	$|A_{E(g)}| = \inj(g,G) = \aut(g) \n(h,G)$.
	Applying the inclusion-exclusion principle \prettyref{eq:incexc} yields 
	\[
	\aut(h) \N(h,G)	= \sum_{g: g \supset h_0} (-1)^{|E(g)|-|E(h_0)|} \inj(g,G) .
	\]	
	proving the desired \prettyref{eq:Ninj}.
\end{proof}

The next result is the counterpart of \prettyref{lmm:subgraphmatching}, which shows the existence of a pair of graphs with matching lower order neighborhood subgraph counts but contain distinct number of copies of a certain motif; however, unlike \prettyref{lmm:subgraphmatching}, so far we can only deal with the clique motifs.
For example for $\omega=3$, we can use the ad hoc construction in \prettyref{eq:adhocHH1}; both graphs have the same degree sequence but distinct number of triangles.
For $\omega=4$, we can choose
\begin{equation}
\begin{aligned}
H = & ~ \Kfour + 3 \times \Square + 12 \times \Paw + 12 \times \PathThree \\
H' = & ~ 6 \times \Diamond + 12 \times \PathFour + 4 \times \Triangle + 	4 \times \Claw
\end{aligned}
\label{eq:HH-nhbd4}
\end{equation}
It it straightforward (although extremely tedious!) to verify that $ \N(h, H) = \N(h, H') $ for all neighborhood subgraphs $ h $ with at most 2 black vertices.
The general result is as follows:

\begin{lemma} \label{lmm:general_clique_neighborhood}
There exists two graphs $ H $ and $ H' $ such that $ \s(K_{\omega}, H) - \s(K_{\omega}, H') \geq 1 $ and $ \N(h, H) = \N(h, H') $ for all neighborhood subgraphs $ h $ such that $ \sfv_b(h) \leq \omega-2 $.
\end{lemma}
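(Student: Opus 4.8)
The plan is to reduce the neighborhood-count matching to an ordinary subgraph-count matching and then construct $H,H'$ by the same linear-algebra device as in \prettyref{lmm:subgraphmatching}. The conceptual key is a vertex-cover observation. By \prettyref{lmm:incexc}, for any connected bicolored $h$ we have $\N(h,G)=\sum_g c(g,h)\,\n(g,G)$, where each $g$ is obtained from the uncolored $h_0$ by adding edges incident to black vertices or attaching new vertices to black vertices. In a neighborhood subgraph every white vertex is adjacent only to black vertices, so every edge of $h_0$ already touches a black vertex, and the same holds for every edge created by the two extension operations. Hence in each $g$ the images of the black vertices form a vertex cover, so $\tau(g)\le \sfv_b(h)$, where $\tau$ denotes the vertex-cover number. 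In particular, if $\sfv_b(h)\le\omega-2$ then only graphs $g$ with $\tau(g)\le\omega-2$ appear. Since $\tau(K_\omega)=\omega-1>\omega-2$, the clique count sits just outside the family that controls the matched neighborhood counts, and this gap is what we exploit.

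First I would record the resulting sufficient condition: if $\n(g,H)=\n(g,H')$ for every connected $g$ with $\tau(g)\le\omega-2$, then the expansion above gives $\N(h,H)=\N(h,H')$ for all connected $h$ with $\sfv_b(h)\le\omega-2$, and hence, by the neighborhood analogue of Whitney's theorem (\prettyref{lmm:nbhd}), for all such $h$. To turn the infinitely many constraints $\{\n(g,\cdot)\}_{\tau(g)\le\omega-2}$ into a finite linear system, I would restrict the search for $H,H'$ to disjoint unions of connected graphs on at most $\omega$ vertices. For such $H$ one has $\n(g,H)=0$ automatically whenever $\sfv(g)>\omega$, so only the finitely many connected $g$ on at most $\omega$ vertices with $\tau(g)\le\omega-2$ are active; moreover connected subgraph counts are additive under disjoint union, so each $\n(g,\cdot)$ is linear in the multiplicity vector of building blocks.

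The construction then mirrors \prettyref{lmm:subgraphmatching}. Let $\mathcal B$ be the set of connected graphs on at most $\omega$ vertices and let $\mathcal C\subset\mathcal B$ consist of those with $\tau(g)\le\omega-2$; note $K_\omega\in\mathcal B\setminus\mathcal C$. Ordering $\mathcal B$ by number of vertices and then number of edges makes the matrix $[\n(g,F)]_{g,F\in\mathcal B}$ triangular with positive diagonal, hence invertible, so $\n(K_\omega,\cdot)$ is linearly independent of $\{\n(g,\cdot):g\in\mathcal C\}$ on the span of $\mathcal B$. Consequently there is a rational (then, after clearing denominators, integer) weight vector $w=(w_F)_{F\in\mathcal B}$ with $\sum_F w_F\,\n(g,F)=0$ for every $g\in\mathcal C$ but $\sum_F w_F\,\n(K_\omega,F)\neq0$. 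Splitting $w$ into positive and negative parts and setting $H=\sum_F\max\{w_F,0\}F$ and $H'=\sum_F\max\{-w_F,0\}F$ (swapping the two if needed) yields the desired pair, since for cliques $\s(K_\omega,\cdot)=\n(K_\omega,\cdot)$ identically and the difference is a nonzero integer, hence at least $1$.

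I expect the main obstacle to be the reduction itself, and specifically the unboundedness flagged after \prettyref{lmm:incexc}: a single neighborhood count $\N(h,\cdot)$ with few black vertices expands into $\n(g,\cdot)$ over arbitrarily large $g$ (arbitrarily many white leaves), so a naive reduction would leave infinitely many constraints. The bound $\tau(g)\le\sfv_b(h)$ is what tames this, and restricting to bounded-size building blocks is what makes only finitely many of those constraints active; the delicate bookkeeping is verifying that no active constraint is missed, i.e.\ that every $g$ on more than $\omega$ vertices has vanishing count on both sides. Once the correct finite family $\mathcal C$ and the separation $\tau(K_\omega)=\omega-1$ are in place, the linear-algebra existence step is routine.
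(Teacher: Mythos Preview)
Your proposal is correct and essentially identical to the paper's proof: both construct $H,H'$ as disjoint unions of connected graphs on at most $\omega$ vertices via the triangular system $[\n(g,F)]$, matching $\n(g,\cdot)$ for every such connected $g$ except $K_\omega$, and both appeal to \prettyref{lmm:incexc} (together with \prettyref{lmm:nbhd} for disconnected $h$) to conclude. Your vertex-cover observation $\tau(g)\le\sfv_b(h)$ is a slightly crisper packaging of the paper's exclusion step---the paper instead notes that at least two non-adjacent white vertices persist through all extension operations so $K_\omega$ never appears in the expansion---but since among connected graphs on at most $\omega$ vertices the condition $\tau(g)\le\omega-2$ is equivalent to $g\neq K_\omega$, the two formulations coincide.
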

\begin{proof}
First we show that there exist a pair of graphs $H$ and $H'$ such that $\n(g,H)=\n(g,H')$ for all connected graphs $g$ with at most $\omega$ vertices expect for the clique $K_\omega$, and $\n(g,H)=\n(g,H')=0$ for all connected graphs $g$ with more than $\omega$ vertices. 
Analogous to the proof of \prettyref{lmm:subgraphmatching}, this either follows from the strong independence of injective graph homomorphism numbers \cite{Erdos1979}, or from the following linear algebra argument. Let $\{h_1,\ldots,h_m\}$ denote all distinct (up to isomorphism) \emph{connected} graphs of at most $\omega$ vertices. Order the graphs in increasing number of edges (arbitrarily among graphs with the same number of edges) so that $h_1$ is an isolated vertex and $h_m=K_{\omega}$. Then the matrix $B=(b_{ij})$ with $b_{ij} = \n(h_i,h_j)$ is upper triangular with strictly positive diagonals. Then $H$ and $H'$ can be constructed from the vector $x = B^{-1} e_m$ similar to \prettyref{eq:HH-matching}; see \prettyref{eq:HH-nhbd4} for a concrete example for $K_4$.
By design, each connected component of $H$ and $H'$ has at most $\omega$ vertices, we have $\n(g,H)=\n(g,H')=0$ for all connected $g$ with $\sfv(g)>\omega$.

Next we show that the neighborhood subgraph counts are matched up to order $\omega-2 $. 
For each neighborhood subgraph $ h $ with $ \sfv_b(h) \leq \omega-2 $, by \prettyref{lmm:incexc}, we have 
	$\N(h,H) = \sum_{g \in \calG} c(g,h) \n(g,H)$, where the coefficients $c(g,h)$ are independent of $H$, and $\calG$ contains all subgraphs obtained from $h$ by adding edges incident to black vertices in $h$ or attaching vertices to black vertices in $h$.
	The crucial observation is two-fold:
	(a) since $ \sfv_b(h) \leq \omega-2 $, there exists at least a pair of white vertices in $h$, which are not connected. Since no edges are added between white vertices, the collection $\calG$ excludes the full clique $K_\omega$;
	(b) for each $g\in \calG$, if $g$ contains more than $\omega$ vertices, then $\n(g,H)=\n(g,H)=0$; if $g$ contains at most $\omega$ vertices (and not $K_\omega$ by the previous point), then $\n(g,H)=\n(g,H)$ by design.
	Therefore we conclude that 
$ \N(h, H) = \N(h, H') $ for all neighborhood subgraphs $ h $ with $ \sfv_b(h) \leq \omega-2 $. 
\end{proof}

\subsection{Subgraph sampling} \label{sec:lb-subgraph}

Next we prove the lower bound part of \prettyref{thm:subgraph-main}:
\begin{proof}
Throughout the proof, we assume that both $d$ and $s$ are at least some sufficiently large constant that only depends on $k=\sfv(h)$ and we use $c,c',c_0,c_1,\ldots$ to denote constants that possibly depend on $k$ only.
We consider two cases separately.
\paragraph{Case I: $ p \leq 1/d $.}

Let $H$ and $H'$ be the pair of graphs from \prettyref{lmm:subgraphmatching}, such that $s(h,H)-\s(h,H')\geq 1$ and 
$\s(g,H)=\s(g,H')$ for all induced subgraphs $g$ with $\sfv(g) \leq k-1$.
Therefore, by \prettyref{lmm:counts}, we have $ \TV(P_{\tH}, P_{\tH'}) = O_{k}(p^{k-1})$. 
Let $r = \s(h,H)$ which is a constant only depending on $k$.
Applying \prettyref{thm:mainlb} with $M = \floor{s / r}$ yields the lower bound
\begin{equation}
\inf_{\widetilde\s} \sup_{\substack{G: ~\sfd(G)\leq d\\ ~~~\s(h,G) \leq s}} \Expect_G|\widetilde{\s}-\s(h, G)|^2 
= \Omega_{k} \left( \frac{s}{p^{k}} \wedge s^2 \right).
\label{eq:subgraphlb1}
\end{equation}

\paragraph{Case II: $ p > 1/d $.}
To apply \prettyref{lmm:lower-complete}, we construct a pair of graphs $H$ and $H'$  with maximum degree $d$ such that $\TV(P_{\tH}, P_{\tH'}) \leq 1/2$, 
$\s(h,H')=0$ and $c_1 \ell^{k-1}/p \leq \s(h,H) \leq c_2 \ell^{k-1}/p$.
Choosing $\ell = c_3 ((sp)^{\frac{1}{k-1}}  \wedge d)$ for some small constant $c_3$ and applying \prettyref{thm:mainlb}, we obtain
\begin{equation}
\inf_{\widetilde\s} \sup_{\substack{G: ~\sfd(G)\leq d\\ ~~~\s(h,G) \leq s}} \Expect_G|\widetilde{\s}(H)-\s(h, G)|^2 
= \Omega\pth{\frac{\ell^{k-1} s }{p}}  = \Theta_{k} \left( \frac{sd^{k-1}}{p} \wedge s^2 \right).
\label{eq:subgraphlb2}
\end{equation}
Combining \prettyref{eq:subgraphlb1} and \prettyref{eq:subgraphlb2} completes the proof of the lower bound of \prettyref{thm:subgraph-main}. 

It remains to construct $H$ and $H'$. The idea of the construction is to expand each vertex in $h$ into an independent set, which was used in the proof of \cite[Lemma 5]{Erdos1979}. Here, we also need to consider the possibility of expanding into a clique. Next consider two cases:

Suppose $h$ satisfies the ``distinct neighborhood'' property, that is, for each $v \in V(h)$, $N_h(v)$ is a distinct subset of $v(h)$. 
Such $h$ includes cliques, paths, cycles, etc.
Pick an arbitrary vertex $u\in V(h)$. 
Let $\{S_v: v \in V(h)\}$ be a collection of disjoint subsets, so that $|S_u| = \ceil{ c/p}$ and $|S_v| = \ceil{c d}$, where
$c$ is a constant that only depends on $\sfv(h)=k$ such that $c k \leq 1$. 
Define a graph $H$ with vertex set $\cup_{v\in v(h)} S_v$ by connecting each pair of $a \in S_u$ and $b\in S_v$ whenever $(u,v) \in E(h)$. In other words, $H$ is obtained by blowing up each vertex in $h$ into an independent set and each edge into a complete bipartite graph.
Repeating the same construction with $h$ replaced by $h-u$ yields $H'$, in which case $S_u$ consists of isolated vertices.
By construction, the maximum degree of both graph satisfies is at most $d$.
Note that $H-S_u = H'-S_u$. 
Thus the sampled graph of $H$ and $H'$ have the same law provided that none of the vertices in $S_u$ is sampled. 
Applying \prettyref{lmm:lower-complete}, we conclude that $\TV(P_{\tH}, P_{\tH'}) \leq (1-p)^{c/p} \leq c'$ for all $p \leq 1/2$, where $c'$ is a constant depending only on $k$. 

Furthermore, 
\begin{align*}
\s(h,H')
= & ~ \sum_{T \cap S_u = \emptyset}\indc{H'[T] \simeq h}  + \sum_{T \cap S_u \neq \emptyset}\indc{H'[T] \simeq h}  \\
\overset{(a)}{=} & ~ \sum_{T \cap S_u = \emptyset}\indc{H'[T] \simeq h}   = \sum_{T \cap S_u = \emptyset}\indc{H[T] \simeq h},
\end{align*}
where (a) follows from the fact that $H'[T]$ contains isolated vertices whenever $T \cap S_u \neq \emptyset$ while $h$ is connected by assumption.
Note that since $|T|=k$, if $T \cap S_u = \emptyset$, then there exists $t,t' \in T$ such that $t,t'$ belong to the same independent set $S_v$ for some $v$. By construction, $t$ and $t'$ have the same neighborhood, contradicting $H[T]\simeq h$. 
Thus, we conclude that $\s(h,H')=0$. For $H$, we have
\begin{align*}
\s(h,H)
= & ~ \sum_{T \cap S_u \neq \emptyset}\indc{H[T] \simeq h}  
\geq |S_u| \prod_{v \neq u} |S_v| \geq c^k \ell^{k-1}/p,
\end{align*}
and, similarly, $\s(h,H) \leq |S_u| (\sum_{v \neq u} |S_v|)^{k-1} \leq (2ck)^k \ell^{k-1}/p$. 

Next suppose that $h$ does have distinct neighborhoods, thus there exist $\{u_1,\ldots,u_\ell\} \subset V(h)$ with $\ell\geq 2$ such that the neighborhood $N_{u_i}(h)$ are identical, denoted by $T$. Let $g \triangleq h[T] \vee \ell K_1$ is an induced (by $T \cup \{u_1,\ldots,u_\ell\}$) subgraph of $h$. We define $H$ with vertex set $\cup_{v \in v(h)} S_v$ by the same procedure as above, except now all vertices are expanded into a clique, with $|S_{u_1}|=\ceil{c/p}$ and 
$|S_v| = \ceil{cd}$ for $v \neq u$. Finally, as before, we connect each pair of $a \in S_u$ and $b\in S_v$ whenever $(u,v) \in E(h)$. Define $H'$ by repeating the same construction with $h$ replaced by $h-u_1$.
Analogous to the above we have $\TV(P_{\tH},P_{\tH'}) \leq c$ and it remains to show that $\s(h,H')=0$. Indeed, for any set $T$ of $k$ vertices that does not include any vertex from $S_{u_1}$, since $S_{u_i}$ forms a clique and $u_1,\ldots,u_\ell$ form an independent set in $h$, the number of induced $g$ in $H'[T]$ is strictly less than that in $h$. Thus, there exists no $T \subset \cup_{v\neq u_1} S_v$ such that $H'[T]$ is isomorphic to $h$, and hence $\s(h,H')=0$.
Entirely analogously, we have $\s(h,H) = \Theta_k(\ell^{k-1}/p)$. 
\end{proof}

\subsection{Neighborhood sampling} \label{sec:lb-neighborhood}


To illustrate the main idea, we only prove the lower bound cliques. The proof for other motifs (of size up to four) is similar but involves several ad hoc constructions; see \prettyref{app:othermotif}.

\begin{theorem}[Cliques] \label{thm:neighborhood_main}
For neighborhood sampling with sampling ratio $p$,
\[
\inf_{\widehat{\s}} \sup_{\substack{G: ~\sfd(G)\leq d\\ ~~~\s(h,G) \leq s}} 
\Expect_G|\widehat{\s}-\s(K_{\omega}, G)|^2 = \Theta_{\omega} \left( \left(\frac{s d}{p^{\omega-1}} \wedge  \frac{s d^{\omega-2}}{p^2}\right) \wedge s^2 \right)
\]
\end{theorem}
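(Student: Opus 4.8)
Since the matching upper bound is already furnished by \prettyref{thm:neighborhood_clique}, the plan is to establish only the lower bound, and the master tool is \prettyref{thm:mainlb}. Thus it suffices to produce, for each of the two rates $\frac{sd}{p^{\omega-1}}$ and $\frac{sd^{\omega-2}}{p^2}$, a base pair $(H,H')$ with $\sfd(H)\vee\sfd(H')\le d$, a controlled per-copy clique count $m=\s(K_\omega,H)\vee\s(K_\omega,H')$, a gap $|\s(K_\omega,H)-\s(K_\omega,H')|$ of order $m$ (so that the truncation level in \prettyref{thm:mainlb} becomes $s^2$), and a total variation $\TV(P_{\tH},P_{\tH'})$ small enough that $\sqrt{s/(m\,\TV)}$ reproduces the target rate. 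Since $\s(K_\omega,\cdot)$ is additive under disjoint union, taking $M=s/m$ disjoint copies and invoking \prettyref{thm:mainlb} then yields the bound together with the $\wedge\, s^2$ saturation, the latter coming for free from the trivial estimator $\widehat{\s}\equiv 0$.

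The combinatorial seed is \prettyref{lmm:general_clique_neighborhood}: it supplies a pair whose neighborhood subgraph counts $\N(h,\cdot)$ agree for every bicolored $h$ with $\sfv_b(h)\le \omega-2$ while the $K_\omega$-counts differ. By the neighborhood version of \prettyref{lmm:counts}, this matching forces the two sampled laws to coincide unless at least $\omega-1$ vertices are sampled, so that $\TV(P_{\tH},P_{\tH'})=O_\omega(p^{\omega-1})$. For a constant-size seed (gap $\Omega(1)$, $m=O_\omega(1)$) this already gives $\Omega_\omega(\frac{s}{p^{\omega-1}}\wedge s^2)$, i.e.\ the bounded-degree shadow of the first rate; the entire difficulty is to inject the maximum degree $d$ into these two terms.

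To promote these to the stated $d$-dependent rates I would, as in Case II of the lower bound for \prettyref{thm:subgraph-main}, blow up the vertices of the distinguishing clique into independent sets (or into the appropriate overlap pattern) so that the clique count and the gap scale by the correct power of $d$ while the maximum degree stays below $d$: roughly, expanding a single vertex for $\frac{sd}{p^{\omega-1}}$ and $\omega-2$ vertices for $\frac{sd^{\omega-2}}{p^2}$, producing families of $\omega$-cliques overlapping in a prescribed face. In the high-degree regime $p>1/d$ I would abandon the count-matching bound on $\TV$---the factor $\binom{m}{v+1}$ in \prettyref{lmm:counts} is ruinous once $m$ grows with $d$---and instead control $\TV$ directly by a coupling, arranging $H$ and $H'$ to be indistinguishable as marked neighborhood graphs unless a small pivot set, together with its neighborhood, is sampled.

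The hard part will be exactly this total-variation control under neighborhood sampling. Unlike subgraph sampling, observing a sampled vertex exposes its entire neighborhood, so the structural difference between $H$ and $H'$ leaks far more readily, and the clean ``hide behind a rare event'' argument must be reworked: the two graphs have to agree as marked neighborhood graphs on everything a typical sample can see, and the distinguishing configurations must be kept genuinely rare even though there are $\asymp d$ of them per shared face. Verifying the required matching of $\N$ means passing through the inclusion--exclusion identity of \prettyref{lmm:incexc}, which expresses $\N$ in terms of ordinary subgraph counts $\n$ with a priori unbounded support, and checking that the blow-up preserves $\n(g,\cdot)$ for all relevant $g$. Making the clique gap, the degree budget, and the coupling align simultaneously is the crux; once a base pair with $\TV$ of the right order is in hand, \prettyref{thm:mainlb} and the disjoint-copy reduction finish the proof.
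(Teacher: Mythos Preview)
Your high-level plan matches the paper's: cite \prettyref{thm:neighborhood_clique} for the upper bound, split the lower bound into two regimes, and feed base pairs into \prettyref{thm:mainlb}, using the seed from \prettyref{lmm:general_clique_neighborhood} plus \prettyref{lmm:counts} when $p$ is small and a direct coupling when $p$ is large. The execution, however, differs from what you sketch in two concrete ways. For $p\le 1/d$ the paper does \emph{not} expand selectively: it takes the seed pair $(H,H')$ from \prettyref{lmm:general_clique_neighborhood} and blows up \emph{every} vertex into an independent set of size $\ell=c(d\wedge s^{1/\omega})$, so the clique gap becomes $\Theta(\ell^\omega)$ while the $\N$-matching up to $\omega-2$ black vertices is inherited wholesale (hence $\TV=O_\omega((\ell p)^{\omega-1})$ via \prettyref{lmm:counts}); plugging into \prettyref{thm:mainlb} with $M=s/\ell^\omega$ gives $\frac{s\ell}{p^{\omega-1}}\wedge s^2\asymp\frac{sd}{p^{\omega-1}}\wedge s^2$ directly. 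Your asymmetric ``single vertex'' expansion would not obviously preserve the $\N$-matching and would force you through \prettyref{lmm:incexc} as you anticipate---the uniform blow-up sidesteps this entirely. For $p>1/d$ the paper supplies the explicit pair via \prettyref{lmm:lower-complete}: with $G$ the complete $(\omega-2)$-partite graph on parts of size $\ell$, set $H=G\vee K_{1/p,\,1/p}$ and $H'=G\vee (2/p)K_1$, so that $\s(K_\omega,H)\asymp \ell^{\omega-2}/p^2$, $\s(K_\omega,H')=0$, and $\TV\le 1-q^{2/p}$ because the two graphs become isomorphic once the $2/p$ ``pivot'' vertices go unsampled. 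This single construction delivers the $\frac{sd^{\omega-2}}{p^2}$ rate without any $\N$-count bookkeeping, replacing your proposed ``expand $\omega-2$ vertices'' program.
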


\begin{proof}
For the lower bound, consider two cases. For simplicity, denote the minimax risk on the left-hand side by $R$.
\paragraph{Case I: $ p > 1/d $.}
Applying \prettyref{lmm:lower-complete} with 
$G$ being the complete $(\omega-2)$-partite graph of $(\omega-2)\ell$ vertices, 
$ H_1 = K_{1/p, 1/p} $, and $ H_2 = (2/p)K_1 $, we obtain two graphs $ H $ and $ H' $ with $ \s(K_{\omega}, H) \asymp \frac{\ell^{\omega-2}}{p^2} $ and $\s(K_{\omega}, H')=0$, and $\TV(P_{\tH}, P_{\tH'}) \leq c < 1 $ for all $p \leq 1/2$.
By \prettyref{thm:mainlb} with $M=s/(\ell^{\omega-2}/p^2)$, we obtain the lower bound $ R \gtrsim \frac{s\ell^{\omega-2}}{p^2} $. Let $ \ell = cd $ if $ \frac{d^{\omega-2}}{p^2} \leq s $ and $ \ell = c(p^2s)^{\frac{1}{\omega-2}} $ if $ \frac{d^{\omega-2}}{p^2} > s $, for some small constant $ c $. In either case, we find that $\s(K_{\omega}, H) \leq s $, $ \s(K_{\omega}, H') \leq s $, and $ R \asymp \frac{s\ell^{\omega-2}}{p^2} \asymp \frac{s d^{\omega-2}}{p^2} \wedge s^2 $.

\paragraph{Case II: $ p \leq 1/d $.}
We use a different construction.
Let $\ell= c(d \wedge s^{1/\omega})$ for some small constant $c$. Let $ H $ and $ H' $ be the two graphs from \prettyref{lmm:general_clique_neighborhood} such that $ \s(K_{\omega}, H)- \s(K_{\omega}, H') \geq 1 $ and  $ \N(h, H) = \N(h, H') $ for all neighborhood subgraphs $ h $ with $ \sfv_b(h) \leq \omega-2 $.
By \prettyref{lmm:counts}, we have $ \TV(P_{\tH}, P_{\tH'}) = O_{\omega}(p^{\omega-1}) < 1 $. 
Next we amplify the gap $ |\s(K_{\omega}, H)- \s(K_{\omega}, H')|= \Omega(\ell^{\omega}) $ by expanding each vertex into an independent set, similar in what is done in the proof of \prettyref{thm:subgraph-main}. For each vertex in $H$, we associate $ \ell $ distinct isolated vertices, and connect each pair of vertices by an edge if and only if they were connected in $H$. This defines a new graph $F$ with $\ell \sfv(H)$ vertices and similarly we construct $F'$ from $H'$. In this way, the subgraph counts of $F$ and $F'$ also match up to order $ \omega-2 $, and, in view of \prettyref{lmm:counts}, $ \TV(P_{\tH}, P_{\tH'}) = O_{\omega}((\ell p)^{\omega-1}) $.
Furthermore, the number of cliques satisfies $\s(K_\omega,F)=\s(K_\omega,H) \ell^\omega$ and $\s(K_\omega,F')=\s(K_\omega,H') \ell^\omega$.
Thus, $ \s(K_{\omega}, F) \asymp \s(K_{\omega}, F') \asymp |\s(K_{\omega}, H) - \s(K_{\omega}, H')| = \ell^{\omega} $.
Applying \prettyref{thm:mainlb} with $M=s/\ell^{\omega}$ yields  $R  \gtrsim (\ell^{\omega} (\sqrt{\frac{s/\ell^{\omega}}{(p\ell)^{\omega-1}}} \wedge \frac{s}{\ell^{\omega}}))^2  \asymp \frac{s\ell}{p^{\omega-1}} \wedge s^2 \asymp \frac{s d }{p^{\omega-1}} \wedge \frac{s^{1+1/\omega}}{p^{\omega-1}} \wedge s^2 \asymp \frac{s d }{p^{\omega-1}}  \wedge s^2$, where the last step follows from the assumption that $ p \leq 1/d $.
\end{proof}

\section{Graphs with additional structures} \label{sec:structure}

In this section, we explore how estimation of motif counts can be improved by prior knowledge of the parent graph structure. In particular, for counting edges, we show that even if the parent graph is known to be a forest a priori, for neighborhood sampling, the bound in \prettyref{thm:edge} remains optimal up to a subpolynomial factor in $p$. Similarly, for subgraph sampling, we cannot improve the rate in \prettyref{thm:subgraph-main}. We also discuss some results for planar graphs. In what follows, we let $ \calF $ and $ \calP $ denote the collection of all forests and planar graphs, respectively.

The next results shows that for estimating edge counts, even if it is known a priori that the parent graph is a forest, the risk in \prettyref{thm:subgraph-main} and \prettyref{thm:edge} cannot be improved in terms of the exponents on $p$. The proofs of all the following results are given in \prettyref{app:additional_proofs}.

\begin{theorem} \label{thm:subgraph-rates-forest}
For subgraph sampling with sampling ratio $ p $,
\begin{equation}
\inf_{\widehat{\sfe}}\sup_{\substack{G\in \calF: ~\sfd(G)\leq d\\ ~~~\sfe(G) \leq m}}\Expect_G|\widehat{\sfe}-\sfe(G)|^2 \asymp \left(\frac{m}{p^2} \vee  \frac{md}{p}\right)\wedge m^2.
\end{equation}
\end{theorem}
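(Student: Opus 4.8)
The plan is to prove matching upper and lower bounds; I write $m$ for the edge budget and specialize throughout to the edge motif $h=\Edge$ (so $k=2$ and $\s(h,\cdot)=\sfe(\cdot)$). The upper bound needs no new argument: forests of maximum degree at most $d$ with at most $m$ edges form a subclass of the graphs in \prettyref{thm:subgraph-main}, so on this subclass the Horvitz--Thompson estimator \prettyref{eq:HT} already has variance at most $8\,\sfe(G)\pth{\frac1{p^2}\vee\frac dp}\le 8\pth{\frac m{p^2}\vee\frac{md}p}$, and truncating against the trivial estimator $\widehat\sfe\equiv 0$ (whose error is at most $m^2$ since $\sfe(G)\le m$) gives the claimed $\lesssim\pth{\frac m{p^2}\vee\frac{md}p}\wedge m^2$. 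For the lower bound I would apply \prettyref{thm:mainlb} with the additive, isomorphism-invariant parameter $f=\sfe$: it then suffices to exhibit, in each regime, a pair of \emph{forest} gadgets $H,H'$ of maximum degree at most $d$ with a controlled total variation $\TV(P_{\tH},P_{\tH'})$ between their sampled laws and a nonzero edge-count gap, after which the disjoint-union construction internal to \prettyref{thm:mainlb} amplifies the gap to the desired scale in $m$. Since disjoint unions of forests are forests, the resulting hard instances remain in $\calF$.

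In the low-degree regime $p\le 1/d$ the target is $\frac m{p^2}\wedge m^2$, and I would use the constant-size pair $H=\Edge+\Vertex$ and $H'=3\,\Vertex$, which are forests on three vertices with $\sfe(H)=1$, $\sfe(H')=0$ and identical vertex counts. By the matching lemma (\prettyref{lmm:counts}) with $v=1$ this forces $\TV(P_{\tH},P_{\tH'})\le\binom32 p^2=O(p^2)$, and \prettyref{thm:mainlb} then yields $\Delta^2=\Omega\pth{\frac m{p^2}\wedge m^2}$. Note that $H,H'$ have maximum degree at most $1\le d$, so the degree constraint is vacuously met.

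In the high-degree regime $p>1/d$ the target is $\frac{md}p\wedge m^2$, and here the crux is to avoid the cycle-laden blow-up construction used in the proof of \prettyref{thm:subgraph-main}. Instead I would take $H$ to be $k_0$ disjoint stars $K_{1,d}$ and $H'$ the empty graph on the same vertex set, so $H$ is a forest of maximum degree $d$ with $\sfe(H)=k_0 d$ while $\sfe(H')=0$. Using the $k_0$ star centers as a ``switch set'', under the natural coupling the two sampled graphs coincide (each being a collection of isolated sampled vertices) unless some center is sampled, whence $\TV(P_{\tH},P_{\tH'})\le 1-(1-p)^{k_0}\le k_0 p$. Choosing $k_0=\min\{\frac1{300p},\frac m{300d}\}$ keeps $\TV\le 1/300$ and $M=m/\sfe(H)\ge 300$ simultaneously, and a short computation shows that \prettyref{thm:mainlb} delivers $\Delta^2\asymp\frac{md}p\wedge m^2$: the first term of the minimum governs the case $d/p\le m$ (where $k_0\asymp 1/p$ and the square-root branch of $\Delta$ is active), and the second governs $d/p>m$ (where $k_0\asymp m/d$ and the $s/m$ branch of $\Delta$ is active).

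The main obstacle is conceptual rather than computational: the high-degree lower bound in \prettyref{thm:subgraph-main} is built from vertex blow-ups into complete bipartite graphs, which are far from acyclic and hence do not transfer to $\calF$. The key observation is that for edge counting the high-degree difficulty is already present in trees --- a star concentrates degree $d$ at a single vertex, and its center behaves as a rare event (sampled only with probability $p$) that simultaneously makes the two hypotheses statistically indistinguishable and opens an $\Omega(d/p)$ edge-count gap per unit of total-variation budget. Verifying this switch-set total-variation bound, rather than relying on matched subgraph counts (which cannot reach the $\frac{md}p$ rate), is the heart of the argument; the only remaining care is in the boundary constants, where the standing assumption that $p$ is bounded away from one and the sublinear regime $p\to 0$ guarantee $k_0\ge 1$.
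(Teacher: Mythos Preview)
Your proposal is correct and follows essentially the same strategy as the paper: the Horvitz--Thompson estimator for the upper bound, and a star-versus-isolated-vertices gadget fed into \prettyref{thm:mainlb} for the lower bound. The paper's argument is slightly more streamlined: it uses a \emph{single} pair $H=S_\ell$, $H'=(\ell+1)K_1$ with $\ell=c(d\wedge m)$ and the two-sided bound $\TV(P_{\tH},P_{\tH'})\le p(1-q^\ell)\le p\wedge \ell p^2$, which simultaneously delivers $\frac{m}{p^2}\wedge m^2$ (via the $\ell p^2$ branch) and $\frac{md}{p}\wedge m^2$ (via the $p$ branch) without splitting into cases. In particular your parameter $k_0$ is superfluous---it cancels in the final $\Delta$, and taking $k_0=1$ already gives the same bound; the adaptation you need when $m$ is not large compared to $d$ is handled in the paper not by replicating stars but by shrinking the single star to $\ell\asymp m$.
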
 

\begin{theorem} \label{thm:edge-forest}
For neighborhood sampling with sampling ratio $ p $,
\begin{equation*}
\inf_{\widehat{\sfe}}\sup_{\substack{G \in \calF: ~\sfd(G)\leq d\\ ~~~\sfe(G) \leq m}}\Expect_G|\widehat{\sfe}-\sfe(G)|^2 = \Omega \left( \frac{m}{p^{2+o(1)}} \wedge \frac{md}{p^{1+o(1)}} \wedge \frac{m^2}{p^{o(1)}} \right),
\end{equation*}
where $ o(1) = 1/\sqrt{\log \frac{1}{p}} $ is with respect to $ p\to0 $ and uniform in all other parameters.
\end{theorem}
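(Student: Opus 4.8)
The plan is to instantiate the general reduction \prettyref{thm:mainlb} with $f = \sfe$: it suffices to exhibit, for each regime of $p$, a pair of forests $H, H'$ of maximum degree at most $d$ with $\sfe(H) \vee \sfe(H') \le m'$, such that the edge counts differ by $\delta \triangleq |\sfe(H) - \sfe(H')|$ while the single-copy neighborhood-sampled laws are close, $\TV(P_{\tH}, P_{\tH'}) \le \eta$. Taking $M = s/m'$ disjoint copies (with $s = m$), \prettyref{thm:mainlb} then yields a risk lower bound of order $\delta^2\,(\frac{m}{m'\eta} \wedge \frac{m^2}{m'^2})$, which I would tune to $\frac{md}{p^{1+o(1)}}$ when $p \le 1/d$ and to $\frac{m}{p^{2+o(1)}}$ when $p > 1/d$, capped at $\frac{m^2}{p^{o(1)}}$ (the regime where the $s/m'$ branch is active).

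The crucial structural difficulty, and the reason a subpolynomial factor appears, is that the clean matching device behind \prettyref{lmm:counts} is unavailable: matching all order-$1$ neighborhood subgraph counts means matching the degree sequence, which forces $\sfe(H) = \sfe(H')$ and kills the gap. Hence I cannot drive $\eta$ down by matching $\N(h,\cdot)$ for $\sfv_b(h) \le 1$; instead $\eta$ must be bounded \emph{directly}. I take $H$ and $H'$ to be disjoint unions of stars, specified by their center-degree profiles $(n_j)$ and $(n'_j)$ (the number of centers of degree $j$), so that $\delta = |\sum_j j(n_j - n'_j)|$. Under neighborhood sampling a center is seen through two channels: the \emph{direct} channel (probability $p$), which reveals its degree exactly, and the \emph{noisy} channel (probability $q$), through which an unsampled center of degree $j$ is observed as a white hub carrying $\Binom(j,p) \approx \Poisson(jp)$ black leaves. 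Poissonizing the sample size reduces $\eta$ to the total variation between the two intensity measures $\sum_j n_j \nu_j$ and $\sum_j n'_j \nu_j$, where $\nu_j$ is the law of the single-center observation.

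The construction therefore chooses $(n_j),(n'_j)$ whose difference $\Delta n$ is concentrated at the high degrees ($j \asymp d$ when $p \le 1/d$, and $j \asymp 1/p$ when $p > 1/d$), so that each unit of $\|\Delta n\|$ buys $\asymp d$ (resp.\ $\asymp 1/p$) of edge gap, and whose low-order moments against the weights $e^{-jp}(jp)^k/k!$ agree up to order $K$. Matching these moments damps both channels at once: it makes the noisy-channel histograms nearly coincide, and it controls the direct channel because the degree-$1$ leaves forced by the high-degree stars — the genuine obstruction — enter $\Delta n$ with large multiplicity and can only be detected through a count shift of order $\Delta n(1)\sqrt{p/n(1)}$, which the moment constraints keep negligible. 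I would realize such $(n_j),(n'_j)$ by the Chebyshev/duality moment-matching argument used for functional-estimation lower bounds \cite{WY14}, adapted so that the first moment (the mean, i.e.\ the edge count) is deliberately left unmatched.

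The main obstacle is exactly this direct control of $\eta$: one must show that the sampled-center channel and the binomial-leaf channel are \emph{simultaneously} uninformative while the mean gap stays macroscopic, reconciling this with the unavoidable degree-$1$ leaf multiplicity created by high-degree stars. This tension is what prevents an exact match to \prettyref{thm:edge}; quantitatively, raising the matching order $K$ shrinks $\eta$ but also shrinks the attainable gap $\delta$ and enlarges the gadget size $m'$, and balancing these forces $K \asymp \sqrt{\log(1/p)}$, producing the stated loss $p^{o(1)} = \exp(-\sqrt{\log(1/p)})$ uniformly in $d$ and $m$. The remaining steps — checking the forest and degree constraints, the Poissonization error, and that the $\sqrt{\cdot}$-branch of \prettyref{thm:mainlb} is the active one in each regime — are routine once the moment-matching pair is in hand.
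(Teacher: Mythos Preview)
Your proposal is correct and follows essentially the same route as the paper: build $H,H'$ as disjoint unions of stars whose center-degree profiles agree in all polynomial moments up to order $k$ except the first (so that $\sfe(H)-\sfe(H') \asymp \ell$), bound $\TV(P_{\tH},P_{\tH'})$ by something like $pA^k + (p\ell A^k)^k$, plug into \prettyref{thm:mainlb}, and balance with $k = \sqrt{\log(1/p)}$. The paper's execution differs only technically: instead of Chebyshev duality it uses an explicit elementary construction (\prettyref{lmm:moments}) producing integer weights $\alpha_x$ on the grid $\{1,\dots,k+1\}$ with $\sum_x x^i\alpha_x = 0$ for $i=0,2,\dots,k$ and $\sum_x x\alpha_x = \lcm(1,\dots,k+1)$, and instead of your Poissonization/two-channel heuristic it bounds TV directly from the neighborhood subgraph counts (\prettyref{lmm:nbhd}, \prettyref{lmm:subgraph_bound}, \prettyref{lmm:tv-edge-forest}), using that $\N(S_u\text{ white-centered},H)=\sum_x w_x\binom{\ell x}{\sfv_b}$ is a degree-$\sfv_b$ polynomial in $x$ and hence matched, while the black-centered star counts and the extra $\ell\alpha S_1$ padding (your ``degree-$1$ leaf'' obstruction) contribute the residual $O(pA^k)$.
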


For estimating the wedge count under subgraph sampling, the following result shows that the risk in \prettyref{thm:subgraph-main}  cannot be improved even if we know the parent graph is a forest.

\begin{theorem}
\label{thm:wedge-rates-forest}
For subgraph sampling with sampling ratio $ p $,
\begin{equation}
\inf_{\widehat{\sfw}}\sup_{\substack{G \in \calF: ~\sfd(G)\leq d\\ ~~~\sfw(G) \leq w}}\Expect_G|\widehat{\sfw}-\sfw(G)|^2 \asymp \left(\frac{w}{p^3} \vee  \frac{wd^2}{p}\right)\wedge w^2.
\end{equation}
\end{theorem}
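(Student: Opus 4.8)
The upper bound requires no new work. Since every forest has maximum degree at most $d$, the Horvitz--Thompson estimator $\widehat{\sfw} = \s(\Wedge,\tG)/p^3$ of \prettyref{eq:HT} falls under the $k=3$ case of \prettyref{thm:subgraph-main}, giving $\Var[\widehat{\sfw}] \lesssim \sfw(G)\pth{\frac{1}{p^3} \vee \frac{d^2}{p}}$. Projecting the estimator onto $[0,w]$ cannot increase the squared error (the target $\sfw(G)$ lies in this interval) and caps it at $w^2$, so the resulting risk is at most $\pth{\frac{w}{p^3}\vee\frac{wd^2}{p}}\wedge w^2$. Hence the entire content of the theorem is the matching lower bound, and the difficulty is that the hard instances must now be forests, whereas the construction behind \prettyref{thm:subgraph-main} used graphs riddled with cycles (complete bipartite blow-ups).

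For the lower bound the plan is to invoke \prettyref{thm:mainlb} twice with $f=\sfw$ and $h=\Wedge$ (the wedge count is additive under disjoint union and isomorphism-invariant), using two \emph{forest} base-pairs, and then to combine the two bounds via the identity $(a\wedge c)\vee(b\wedge c)=(a\vee b)\wedge c$. The $\frac{w}{p^3}$ term comes from a forest analogue of \prettyref{lmm:subgraphmatching}: take $H=P_4$ and $H'=K_{1,3}$, both trees on four vertices with three edges, so that $\s(g,H)=\s(g,H')$ for every connected $g$ with at most two vertices, while $\sfw(P_4)=2\neq 3=\sfw(K_{1,3})$. By the subgraph-sampling count lemma (\prettyref{lmm:counts}), $\TV(P_{\tH},P_{\tH'})=O(p^3)$; with base wedge count $m=\Theta(1)$ and gap $|\sfw(H)-\sfw(H')|=1$, \prettyref{thm:mainlb} with $M=\Theta(w)$ yields risk $\gtrsim \frac{w}{p^3}\wedge w^2$.

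The $\frac{wd^2}{p}$ term is the heart of the matter. I would set $\ell\asymp d\wedge\sqrt{w}$ (up to an absolute constant) and take $H=K_{1,\ell}$, the star with root $r$ and $\ell$ leaves, against $H'$ the empty graph on the same $\ell+1$ vertices; both are forests of maximum degree at most $d$, with $\sfw(H)=\binom{\ell}{2}\asymp\ell^2$ and $\sfw(H')=0$. The key estimate is a coupling bound on the sampled laws: running both models on the \emph{same} sample $\tilde S$, every edge of $K_{1,\ell}$ is incident to $r$, so on the event $\{r\notin\tilde S\}$ no edge is observed and the two observed labeled graphs coincide exactly; they can differ only when $r\in\tilde S$, whence $\TV(P_{\tH},P_{\tH'})\le p$. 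Taking $m=\delta=\binom{\ell}{2}\asymp\ell^2$ and applying \prettyref{thm:mainlb} with $M=\Theta(w/\ell^2)$ gives risk $\gtrsim \frac{w\ell^2}{p}\wedge w^2$. When $d^2\lesssim w$ this is $\asymp\frac{wd^2}{p}\wedge w^2$, and when $d^2\gtrsim w$ the choice $\ell\asymp\sqrt{w}$ saturates the $w^2$ cap, which is exactly the target since then $\frac{wd^2}{p}\ge w^2$. Combining the two constructions produces $\pth{\frac{w}{p^3}\vee\frac{wd^2}{p}}\wedge w^2$.

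I expect the star construction to be the main obstacle, for two reasons. First, forests cannot realize the bipartite blow-ups of \prettyref{thm:subgraph-main}, so the $\Theta(d^2)$ wedges must be concentrated at a single high-degree hub, and the indistinguishability must rest entirely on the observation that a star is invisible to subgraph sampling unless its rarely sampled hub is caught. Second, this pair only delivers $\TV\le p$ rather than the $O(p^3)$ of the first construction; this is harmless for small $p$, and for $p$ bounded in $(c_0,1)$ one appeals to the standard Le Cam two-point bound, which holds for any base total variation bounded away from $1$ at the expense of absolute constants (the threshold $1/300$ in \prettyref{thm:mainlb} being merely convenient), so the stated $\asymp$ holds uniformly over $p$ bounded away from one. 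As in the proof of \prettyref{thm:subgraph-main}, $d$ and $w$ are taken to exceed a sufficiently large constant so that $K_{1,3}$ is admissible and $M\ge 300$.
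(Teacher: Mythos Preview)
Your proposal is correct and follows essentially the same strategy as the paper. The paper also proves the upper bound via the Horvitz--Thompson estimator and \prettyref{thm:subgraph-main}, and for the lower bound uses two forest base-pairs plugged into \prettyref{thm:mainlb}: for the $\frac{w}{p^3}$ term the paper takes $H=P_4+K_1$ versus $H'=P_3+P_2$ (matching vertex and edge counts, wedge gap $1$, $\TV=O(p^3)$), while you use the equally valid pair $P_4$ versus $K_{1,3}$; for the $\frac{wd^2}{p}$ term the paper uses exactly your star-versus-empty pair $S_\ell$ versus $(\ell+1)K_1$ with $\TV\le p$. Your choice $\ell\asymp d\wedge\sqrt{w}$ is in fact the careful one (the paper writes $\ell=c(d\wedge w)$, which appears to be a slip since $\sfw(S_\ell)=\binom{\ell}{2}$ must stay below $w$), and your handling of the constant thresholds in \prettyref{thm:mainlb} is appropriate.
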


On the other hand, for neighborhood sampling, the tree structure can be exploited to improve the rate. Analogous to \prettyref{eq:edge-estimator1}, we consider an estimator of the form
\begin{equation}
\widehat{\sfw} = \lambda \N(\tikz[scale=0.75,baseline=(zero.base)]{\draw (0,0) node (zero) [vertexdotsolid] {} -- (0.25,{0.5*sin(60)}) node[vertexdot] {} -- (0.5,0) node[vertexdotsolid] {};},\tG) + \alpha \N(\tikz[scale=0.75,baseline=(zero.base)]{\draw (0,0) node (zero) [vertexdotsolid] {} -- (0.25,{0.5*sin(60)}) node[vertexdotsolid] {} -- (0.5,0) node[vertexdot] {};},\tG) + \beta \N(\tikz[scale=0.75,baseline=(zero.base)]{\draw (0,0) node (zero) [vertexdotsolid] {} -- (0.25,{0.5*sin(60)}) node[vertexdotsolid] {} -- (0.5,0) node[vertexdotsolid] {};},\tG),
\label{eq:wedge-estimator1}
\end{equation}
If we weight \tikz[scale=0.75,baseline=(zero.base)]{\draw (0,0) node (zero) [vertexdotsolid] {} -- (0.25,{0.5*sin(60)}) node[vertexdot] {} -- (0.5,0) node[vertexdotsolid] {};} and \tikz[scale=0.75,baseline=(zero.base)]{\draw (0,0) node (zero) [vertexdotsolid] {} -- (0.25,{0.5*sin(60)}) node[vertexdotsolid] {} -- (0.5,0) node[vertexdot] {};} equally, i.e., $ \alpha = \lambda $, this estimator reduces to \prettyref{eq:hest} and hence inherits the same performance guarantee in \prettyref{eq:var-general-graph}, which by \prettyref{thm:broken}, is optimal. However, as will be seen in \prettyref{thm:broken-forest}, there is added flexibility by this three-parameter family of estimators that produces improved bounds when the parent graphs satisfies certain additional structure. It should also be mentioned that the alternative choices $ \lambda = \frac{5-8p}{p^2(4p-3)} $, $ \alpha = \frac{1}{p^2} $, and $ \beta = \frac{3p-2}{p^3(4p-3)} $ yield the same performance bound as in \prettyref{eq:var-general-graph}.

For this next result, we show that we can improve the performance of the wedge estimator \prettyref{eq:wedge-estimator1} if the parent graph is a forest by choosing alternate values of the parameters: $ \alpha = \frac{1}{2pq} $, $ \lambda = \frac{1}{p^2} $, and $ \beta = 0 $. These choices eliminate the largest term in the variance of \prettyref{eq:wedge-estimator1}, which is proportional to $ \n(S_4, G)(4\alpha^2p^3q^2+4\lambda\beta p^4q+\beta^2p^5+p^4q\lambda^2-1) $. We immediately get the following variance bound:
\begin{equation} \label{eq:variance-broken-forest}
\Var[\widehat{\sfw}] \lesssim \frac{\sfw(G)}{p^2} \vee \frac{\sfw(G)d}{p}.
\end{equation}

Note also that $ \s(P_3, G) = \sum_u \binom{d_u}{2} $ whenever $G$ is a forest. Hence another estimator we can use is $ \sum_u \frac{b_u}{p}\binom{d_u}{2} $ which has variance of order $ \frac{\sfw(G)d^2}{p} $. Putting this all together, we obtain the following result.
\begin{theorem} \label{thm:wedge-forest}
For neighborhood sampling with sampling ratio $ p $,
\begin{equation*}
\inf_{\widehat{\sfw}}\sup_{\substack{G\in \calF: ~\sfd(G)\leq d\\ ~~~\sfw(G) \leq w}}\Expect_G|\widehat{\sfw}-\sfw(G)|^2 \lesssim \left(\frac{w}{p^{2}} \vee \frac{wd}{p} \right) \wedge \left(\frac{wd^2}{p}\right) \wedge w^2.
\end{equation*}
\end{theorem}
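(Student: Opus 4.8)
The statement is an upper bound, so the plan is to exhibit a single estimator whose worst-case risk over $\{G\in\calF:\sfd(G)\le d,\ \sfw(G)\le w\}$ is bounded by the right-hand side. The key simplification is that $d$, $w$, and $p$ are all known quantities defining the class, so I may compute the three terms of the claimed minimum and run whichever of three candidate estimators has the smallest guaranteed risk; the resulting selector then inherits, as its worst-case risk, the minimum of the three individual guarantees. The three candidates are: (i) the forest-tuned estimator \prettyref{eq:wedge-estimator1} with $\alpha=\frac{1}{2pq}$, $\lambda=\frac{1}{p^2}$, $\beta=0$, whose variance is \prettyref{eq:variance-broken-forest}, namely $O\big(\frac{w}{p^2}\vee\frac{wd}{p}\big)$; (ii) the degree-based estimator $\widehat{\sfw}_2=\sum_u\frac{b_u}{p}\binom{d_u}{2}$; and (iii) the constant estimator equal to $w/2$, whose squared error is at most $w^2/4$ because $0\le\sfw(G)\le w$.

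I would first handle candidate (ii), the only ingredient not already recorded before the statement. The starting point is the forest identity $\s(\Wedge,G)=\sum_u\binom{d_u}{2}$, which holds because a forest is triangle-free: no two neighbors of a common vertex are adjacent, so every vertex together with two of its neighbors induces a wedge. Under neighborhood sampling the degree $d_u$ of a sampled vertex is observed without error, so $\widehat{\sfw}_2$ is computable, and $\Expect[\widehat{\sfw}_2]=\sum_u\binom{d_u}{2}=\sfw(G)$ establishes unbiasedness. Because the indicators $b_u\iiddistr\Bern(p)$ are independent across $u$, the variance reduces to the single sum $\Var[\widehat{\sfw}_2]=\frac{q}{p}\sum_u\binom{d_u}{2}^2$, which I bound by $\frac{q}{p}\big(\max_u\binom{d_u}{2}\big)\sum_u\binom{d_u}{2}\lesssim\frac{d^2}{p}\,\sfw(G)\le\frac{wd^2}{p}$, using $\sfd(G)\le d$ and $\sfw(G)\le w$.

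With the three guarantees $\frac{w}{p^2}\vee\frac{wd}{p}$, $\frac{wd^2}{p}$, and $w^2$ in hand, the theorem follows from a short case analysis of their pointwise minimum. For $p\ge 1/d$, candidate (i) equals $\frac{wd}{p}$ and dominates both $\frac{wd^2}{p}$ and $w^2$; for $1/d^2\le p<1/d$, candidate (i) equals $\frac{w}{p^2}$, which is still below $\frac{wd^2}{p}$; for $p<1/d^2$, candidate (ii)'s bound $\frac{wd^2}{p}$ falls below $\frac{w}{p^2}$; and in every regime the constant estimator caps the risk at $O(w^2)$. Collecting the cases reproduces exactly $\big(\frac{w}{p^2}\vee\frac{wd}{p}\big)\wedge\frac{wd^2}{p}\wedge w^2$, as claimed.

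The main obstacle is not this assembly but the variance bound \prettyref{eq:variance-broken-forest} behind candidate (i), which I would establish by the covariance bookkeeping for the three-parameter estimator restricted to forests. Writing $\widehat{\sfw}=\sum_W\mathcal{K}_W$ as a sum over the induced wedges $W$ of $G$, where $\mathcal{K}_W$ is a function of the three sampling indicators of $W$ and is nonzero only on configurations with at least two sampled vertices (so that non-adjacency of the two leaves is always certified), the variance decomposes into a diagonal term of order $\sfw(G)/p^2$ and covariance terms indexed by the overlap pattern of two induced wedges, which over a forest are subgraphs on at most five vertices. The decisive feature engineered by the forest-tuned coefficients is that conditioning on the sampling status of a vertex in the \emph{center} role yields a constant conditional expectation, which annihilates the dominant covariance term; concretely, this term is proportional to $\n(S_4,G)$ with coefficient $4\alpha^2p^3q^2+4\lambda\beta p^4q+\beta^2p^5+p^4q\lambda^2-1$, which equals $p+q-1=0$ at the chosen $\alpha,\lambda,\beta$. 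Controlling the remaining covariances against $\sfw(G)$ and $d$ then yields \prettyref{eq:variance-broken-forest}, and I expect this step — the careful enumeration of overlap types and the verification of the cancellation — to be the technical crux.
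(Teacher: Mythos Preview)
Your proposal is correct and follows essentially the same approach as the paper: the paper also combines the forest-tuned three-parameter estimator \prettyref{eq:wedge-estimator1} with $\alpha=\frac{1}{2pq}$, $\lambda=\frac{1}{p^2}$, $\beta=0$ (yielding \prettyref{eq:variance-broken-forest}), the degree-based estimator $\sum_u\frac{b_u}{p}\binom{d_u}{2}$ (yielding $\frac{\sfw(G)d^2}{p}$), and the trivial $w^2$ bound, then takes the minimum. Your explicit selector argument, variance computation for candidate (ii), and verification that the $\n(S_4,G)$ coefficient equals $p+q-1=0$ are all accurate elaborations of what the paper states more tersely in the paragraph preceding the theorem.
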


The next theorem shows that the minimax bound from \prettyref{thm:wedge-forest} is optimal.

\begin{theorem}\label{thm:broken-forest}
For neighborhood sampling with sampling ratio $ p $ and $ w \geq d $,
\begin{equation*}
\inf_{\widehat{\sfw}}\sup_{\substack{G \in \calF: ~\sfd(G)\leq d\\ ~~~\sfw(G) \leq w}}\Expect_G|\widehat{\sfw}-\sfw(G)|^2 = \Omega \left(\left(\frac{w}{p^2} \vee \frac{wd}{p} \right) \wedge \left(\frac{wd^2}{p}\right) \wedge w^2 \right).
\end{equation*}
\end{theorem}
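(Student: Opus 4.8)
The plan is to establish all the stated rates through the general reduction of \prettyref{thm:mainlb} (and, in the dense regime, a plain two‑point/Le~Cam bound), by exhibiting in each range of $p$ a pair of \emph{forests} $H,H'$ with $\sfd(H)\vee\sfd(H')\le d$, bounded per‑block wedge count $\sfw(H)\vee\sfw(H')\le m$, a wedge gap $|\sfw(H)-\sfw(H')|$, and a controlled total variation $\TV(P,P')$ between the neighborhood‑sampled laws; replicating $M=\lfloor w/m\rfloor$ such blocks then yields a lower bound of order $\frac{mw}{\TV}\wedge w^2$, the $w^2$ ceiling coming from the trivial two‑point that distinguishes $\asymp w$ wedges from the empty graph. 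The governing structural fact — which forces a different argument than the clique lower bounds built on \prettyref{lmm:subgraphmatching} and \prettyref{lmm:counts}, where one matches neighborhood counts to high order — is that for a forest $\sfw(G)=\sum_v\binom{d_v}{2}$ depends only on the degree sequence, whereas sampling a vertex reveals its degree exactly. Hence two forests with different wedge counts can never agree even at order one and $\TV(P,P')\gtrsim p$ is unavoidable. The mechanism I will exploit is the complementary one: neighborhood sampling exposes a vertex's degree \emph{only} when that vertex is itself sampled, so sampling a leaf reveals an edge to a white (unsampled) center but not the center's degree. Configurations that differ only in how leaves are bundled into high‑degree centers therefore remain confusable provided the informative events — sampling a center (rate $\asymp p$) and sampling two leaves of a common center (rate $\asymp(\delta p)^2$) — are rare or have confusable counts. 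Every $\TV$ bound will be produced by an explicit coupling separating these events from the uninformative bulk, since \prettyref{lmm:counts} is far too lossy here.

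For the sparse regime $p<1/d$ I will apply \prettyref{thm:mainlb} with $H=S_\delta$ against $H'$ equal to two disjoint copies of $S_{\delta/2}$ (padded by one isolated vertex to equalize $\sfv$), where $\delta=\min\{d,\lfloor p^{-1/2}\rfloor\}$, so that $\sfd\le\delta\le d$ and both the per‑block wedge count and the gap are $\asymp\delta^2$. The only order‑$p$ distinguishing event is sampling a center, which displays degree $\delta$ in $H$ versus $\delta/2$ in $H'$; the competing ``two sampled leaves of one center'' event has rate $\asymp\delta^2 p^2\le p$ by the choice of $\delta$, so $\TV\asymp p$. Substituting $m\asymp\delta^2$ gives a bound $\asymp\frac{\delta^2 w}{p}\wedge w^2=\big(\tfrac{w}{p^2}\wedge\tfrac{wd^2}{p}\big)\wedge w^2$, i.e.\ exactly the two sparse rates, with the transition at $p=1/d^2$ corresponding to whether $\delta=p^{-1/2}$ or $\delta=d$.

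The dense regime $p\ge1/d$ is genuinely different: for the split construction the number of observed common‑center wedges now concentrates \emph{below} its mean shift, rendering $S_\delta$ and $2S_{\delta/2}$ distinguishable. Here I will instead keep all centers at degree exactly $d$ and vary only the \emph{number} of degree‑$d$ stars, comparing $N$ disjoint copies of $S_d$ (with $N\asymp w/\binom d2$, so $\sfw(H)\le w$) against a graph in which $\Delta N\asymp\epsilon\sqrt{N/(dp)}$ of the stars are deleted and their vertices replaced by disjoint edges. Because every surviving center has degree $d$ in both graphs, sampling a center is uninformative about which graph we see, and all information lives in the counts of sampled centers (distributed $\Binom(N,p)$), of sampled leaves, of observed common‑center wedges, and of $\Fulledge$'s; a short computation shows each mean shift lies within one standard deviation once $\epsilon$ is small and a common baseline of $\asymp(\Delta N\,dp)^2$ reservoir edges is included (present in both graphs) to drown the $\Fulledge$ signal and, simultaneously, to match $\sfv$ and the degree‑one count exactly. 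Thus $\TV\le 1-c$ while $|\sfw(H)-\sfw(H')|=\Delta N\binom d2\asymp\sqrt{wd/p}$, and Le~Cam (valid since $p$ is bounded away from one) delivers the rate $\frac{wd}{p}$; for the moderate sub‑range where $\Delta N<1$ is infeasible I will instead perturb the degree of a bounded number of stars by $\asymp\sqrt{d/p}$, which is confusable through the hidden‑center mechanism while respecting $\sfw\le w$ and $\sfd\le\min\{d,\sqrt{2w}\}$.

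The crux of the whole argument is this dense‑regime total‑variation bound. Because $\sfv$ and all sampled degrees are observed without error, the two laws cannot be matched in any naive moment sense; the lower bound must be powered by the \emph{fluctuations} of several observable count statistics simultaneously dominating the mean shift induced by the wedge gap. I expect the delicate part to be the joint coupling bookkeeping — verifying confusability for the center count, the half‑edge count, the common‑center wedge count, and the $\Fulledge$ count at once, and sizing the edge reservoir so that it neutralizes the $\Fulledge$ tell yet keeps $\sfv$ and the degree‑one count matched and introduces no degree‑two or isolated‑vertex signatures — rather than any single estimate. This is also where the hypothesis $w\ge d$ enters, ensuring a usable star count $1\le N\lesssim w/\binom d2$, and where one must be careful that when $w\lesssim d^2$ the effective degree is $\min\{d,\sqrt{2w}\}$ so that the $\frac{wd}{p}$ term is invoked only on the range where it is actually attainable.
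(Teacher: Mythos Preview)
Your sparse-regime construction ($S_\delta$ versus two copies of $S_{\delta/2}$ with $\delta=\min\{d,\lfloor p^{-1/2}\rfloor\}$) is correct and in fact morally the same as what the paper does: the paper invokes its moment-matching machinery (\prettyref{lmm:moments} and \prettyref{lmm:tv-broken-forest}) but then sets $k=2$, so it is also just matching vertex and edge counts among a bounded collection of stars and getting $\TV=O(p+(p\ell)^2)$. Your bare-hands version is a bit cleaner.

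The dense-regime plan, however, has a genuine gap. You propose $H=N\cdot S_d$ versus $H'=(N-\Delta N)\cdot S_d$ plus a reservoir of disjoint edges, with $\Delta N\asymp\sqrt{N/(dp)}$, and you assert that ``all information lives in'' four scalar counts. That is not true: the observer sees, for every $k\ge 2$, the number of white-centered components with exactly $k$ black leaves, and such a component can arise only from a star (never from a reservoir edge). Summing over $k\ge 2$ gives the count
\[
Z \;=\; \#\{\text{unsampled centers with }\ge 2\text{ sampled leaves}\}\;\sim\;\Binom\bigl(N,\;q(1-q^d-dpq^{d-1})\bigr),
\]
and the success probability here is $q-o(1)$ once $dp\gg 1$. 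Thus $Z$ has mean shift $\asymp\Delta N$ between $H$ and $H'$ and standard deviation $\asymp\sqrt{Npq}$, so this single statistic already forces $\TV\gtrsim \Delta N/\sqrt{Np}$. With your choice $\Delta N\asymp\sqrt{N/(dp)}$ this ratio is $\asymp 1/(p\sqrt d)$, which blows up throughout the range $1/d<p<1/\sqrt d$. Shrinking $\Delta N$ to $\sqrt{Np}$ would tame $Z$ but collapses the squared gap to $Npd^4\asymp wpd^2$, far below the target $wd/p$. The reservoir cannot help here because disjoint edges never produce a white vertex with two black neighbors. (Incidentally, your claim that one can match $\sfv$ and the degree-one count \emph{exactly} with stars-plus-edges is also off by $\Delta N$ vertices, though that is minor.)

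The paper's dense-regime construction avoids all of this by fixing the number of stars at two and varying only their \emph{sizes}: take $G_\theta=S_{cd-\theta}+S_{cd+\theta}$ with $\theta,\theta'\asymp d$ and $|\theta-\theta'|\asymp\sqrt{d/p}$. Conditional on neither center being sampled (probability $q^2$), the observer sees two white centers with $\Binom(cd\mp\theta,p)$ black leaves each, and the relevant binomial total variations are $O(1)$ by the choice of $|\theta-\theta'|$ (this is \prettyref{lmm:lower-star} with $A=B=1$). The number of white-centered components is then identically two, so the statistic that kills your construction carries no information, and the only signal is the pair of leaf counts. The wedge gap is $|\theta^2-{\theta'}^2|\asymp d\sqrt{d/p}$, and replicating via \prettyref{thm:mainlb} gives the $wd/p$ rate directly. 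You actually describe exactly this ``perturb the degree by $\asymp\sqrt{d/p}$'' idea, but only as a fallback for the sub-range where $\Delta N<1$; it should be the primary construction for the entire regime $p>1/d$.
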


In the context of estimating triangles, the next set of results show that planarity improves the rates of estimation for both sampling models. Despite the smaller risk however, for subgraph sampling, the optimal estimator is still the Horvitz-Thompson type.

\begin{theorem} \label{thm:triangle-planar-subgraph}
For subgraph sampling with sampling ratio $ p $,
\begin{equation*}
\inf_{\widehat{\sft}}\sup_{\substack{G \in \calP: ~\sfd(G)\leq d\\ ~~~\sft(G) \leq t}}\Expect_G|\widehat{\sft}-\sft(G)|^2 \asymp \left(\frac{t}{p^3} \vee \frac{td}{p^2} \right) \wedge t^2.
\end{equation*}
\end{theorem}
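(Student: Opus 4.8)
The plan is to prove matching upper and lower bounds, both driven by one structural fact: in a planar graph the neighborhood $G[N_G(v)]$ of every vertex $v$ is outerplanar, hence has at most $2d_v-3<2d$ edges, so the number $\sft_v$ of triangles through $v$ is at most $2d$. This is the planar improvement over the generic bound $\sft_v\le\binom{d}{2}$ and is what upgrades the high-degree term from $\frac{td^2}{p}$ to $\frac{td}{p^2}$.

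\emph{Upper bound.} I would run the Horvitz--Thompson estimator \eqref{eq:HT} with $h=\Triangle$. Exactly as in the variance computation in the proof of \prettyref{thm:subgraph-main}, $\Var[\widehat{\sft}]\le\sum_{t=1}^{3}p^{-t}N_t$, where $N_t$ is the number of ordered pairs of triangles $(T,T')$ with $|T\cap T'|=t$; thus $N_3=\sft(G)$, $N_2$ counts pairs sharing an edge, and $N_1$ counts pairs sharing a single vertex. Using planarity, $N_2\le\sum_{e}\binom{\mathrm{tr}(e)}{2}$ with $\mathrm{tr}(e)\le 2d$ and $\sum_e\mathrm{tr}(e)=3\sft(G)$ gives $N_2\lesssim\sft(G)\,d$, and $N_1\le\sum_v\binom{\sft_v}{2}$ with $\sft_v\le 2d$ and $\sum_v\sft_v=3\sft(G)$ gives $N_1\lesssim\sft(G)\,d$. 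Substituting, $\Var[\widehat{\sft}]\lesssim\frac{\sft(G)}{p^3}+\frac{\sft(G)d}{p^2}+\frac{\sft(G)d}{p}\lesssim\frac{\sft(G)}{p^3}\vee\frac{\sft(G)d}{p^2}$ (the $p^{-1}N_1$ term being dominated for $p\le 1$); combining with the trivial estimator, whose error is at most $t^2$, yields the stated upper bound.

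\emph{Lower bound.} I would invoke the reduction \prettyref{thm:mainlb} with $f=\sft$ and two planar gadget pairs, one per regime, and combine them through the identity $\min(a,c)\vee\min(b,c)=\min(\max(a,b),c)$. For the term $\frac{t}{p^3}$ (binding when $p\le 1/d$), take $H=\Paw$ and $H'=\Square$ as in \eqref{eq:adhocHH1}: both are planar with degree at most $3$, equal vertex and edge counts, and triangle counts $1$ and $0$. Matching connected subgraph counts up to size two, \prettyref{lmm:counts} gives $\TV(P_{\tH},P_{\tH'})=O(p^3)$, and with $m=1$, $M=\lfloor s\rfloor$, $s=t$, \prettyref{thm:mainlb} produces $\Delta^2\asymp\frac{t}{p^3}\wedge t^2$. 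The crux is the term $\frac{td}{p^2}$ (binding when $p>1/d$): here I would take $H=K_2\vee(d\,K_1)$, the triangular book on two adjacent hubs $u,v$ both joined to $d$ independent vertices, which is planar, has maximum degree $d+1$, and contains exactly $d$ triangles, \emph{all sharing the spine edge} $\{u,v\}$; its twin is $H'=K_{2,d}$, the same graph with the spine edge removed, so $\sft(H')=0$. Since $H$ and $H'$ differ in a single edge, which the sampler can detect only when both endpoints are sampled, coupling the choices of all other vertices gives the direct bound $\TV(P_{\tH},P_{\tH'})\le p^2$. With $m=d$, $M=s/d$, $s=t$, \prettyref{thm:mainlb} then yields $\Delta^2\asymp d^2\big(\tfrac{s}{dp^2}\wedge\tfrac{s^2}{d^2}\big)=\frac{sd}{p^2}\wedge s^2$, completing the lower bound.

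\emph{Main obstacle and loose ends.} The hard part is precisely the high-degree gadget. The vertex blow-up of \prettyref{thm:subgraph-main} produces complete multipartite graphs such as $K_{1/p,d,d}$, which are non-planar, and any planar gadget whose triangle \emph{gap} is proportional to its triangle \emph{count} is doomed, since a planar graph on $m'$ vertices has only $O(m')$ triangles, making the total-variation cost of a subgraph-matching construction prohibitive. The resolution is to abandon edge-count matching and instead pile all $\asymp d$ triangles onto one shared edge, so that a single-edge deletion annihilates every triangle while the sampled laws separate only on the probability-$p^2$ event that both hubs are sampled; this buys $\TV\le p^2$ rather than $\TV\asymp p^{3}$ and is what makes $\frac{td}{p^2}$ (rather than merely $\frac{td}{p}$) attainable as a lower bound. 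Two routine caveats remain: \prettyref{thm:mainlb} requires $\TV\le 1/300$, which holds once $p$ is below an absolute constant, with the complementary range of $p$ bounded away from $0$ and $1$ (where $\frac{t}{p^3}$, $\frac{td}{p^2}$, $t^2$ are all comparable to $t$, $td$, $t^2$) handled by the same gadgets after adjusting constants; and the requirement $M\ge 300$, i.e.\ $t\gtrsim d$, together with the small-count regime is absorbed into the $\wedge\,t^2$ clipping.
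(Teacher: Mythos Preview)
Your proof is correct and the overall strategy matches the paper's: Horvitz--Thompson for the upper bound, with the planar improvement coming from $\n(\Bowtie,G)=O(td)$ rather than $O(td^2)$ (you justify this via the outerplanar-neighborhood argument, whereas the paper simply asserts it), and book-type gadgets fed into \prettyref{thm:mainlb} for the lower bound. The one noteworthy difference is in the lower-bound construction: the paper uses a \emph{single} pair, $H=(K_2\vee\ell K_1)+\ell K_1$ versus $H'=$ two $S_\ell$ stars with an edge between their roots, obtaining $\TV(P_{\tH},P_{\tH'})\le p^2(1-q^{\ell})\le p^2\wedge p^3\ell$ and thus covering both regimes at once with $\ell=c(d\wedge t)$. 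Your two separate pairs ($\Paw$/$\Square$ for small $p$; book vs.\ $K_{2,d}$ for large $p$) work equally well; your single-edge-deletion coupling in the high-degree regime is arguably cleaner, while the paper's unified pair avoids the case split. One cosmetic remark: the bound $\mathrm{tr}(e)\le d$ holds in any graph, so planarity enters only through your $\sft_v\le 2d$ bound on $N_1$, not through $N_2$.
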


\begin{theorem} \label{thm:triangle-planar}
For neighborhood sampling with sampling ratio $ p $,
\begin{equation*}
\left(\left( \frac{t}{p^{7/3}} \wedge \frac{td}{p^2} \right)\vee \frac{td}{p} \right) \wedge t^2 \lesssim \inf_{\widehat{\sft}}\sup_{\substack{G \in \calP: ~\sfd(G)\leq d\\ ~~~\sft(G) \leq t}}\Expect_G|\widehat{\sft}-\sft(G)|^2 \lesssim \left(\left(\frac{t}{p^3} \wedge \frac{td}{p^2} \right)\vee \frac{td}{p} \right) \wedge t^2.
\end{equation*}
\end{theorem}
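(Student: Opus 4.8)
The plan is to treat the upper and lower bounds separately: for the upper bound I reuse the linear estimator $\hat\s=\alpha\N(\Kempty_3,\tG)+\beta\N(\Kfull_3,\tG)$ from \prettyref{eq:cliqueest} (the case $\omega=3$), and for the lower bound I apply the reduction \prettyref{thm:mainlb} to disjoint copies of a small \emph{planar} hard pair. Planarity enters in exactly one place: a structural bound on the number of triangles incident to a vertex.

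\textbf{Upper bound.} I would start from the exact variance decomposition \prettyref{eq:var-nbhd}, which for triangles reads $\Var[\hat\s]=c_1N_1+c_2N_2+c_3\,\sft(G)$, where $N_1,N_2$ count pairs of triangles sharing exactly one vertex (bowties) and exactly one edge (diamonds), and the $c_t$ are the covariance coefficients of \prettyref{eq:fcross}. The key planar input is that for any vertex $v$ the graph induced on $N_G(v)$ is outerplanar (since $v$ is adjacent to all of $N_G(v)$), hence has at most $2d_v-3$ edges; thus the number $\tau_v$ of triangles through $v$ is $O(d_v)$, and $N_1=\sum_v\binom{\tau_v}{2}=O(d)\sum_v\tau_v=O(d\,\sft(G))$ --- a factor-$d$ improvement over the general bound $O(d^2\sft)$, whereas $N_2=O(d\,\sft(G))$ already holds for every graph because each edge lies in at most $d$ triangles. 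The decisive algebraic point is that the generic high-degree choice $\alpha=p^{-2}$ (which only kills $c_1$) is suboptimal here; instead I take the \emph{balancing} choice $\alpha=\frac{1}{2p^2}$, for which direct substitution into \prettyref{eq:fcross} gives $c_1=\frac{q}{4p}$, $c_2=\frac{q}{2p}$, and $c_3=O(p^{-3})$. What makes $\alpha=\frac{1}{2p^2}$ special is that $1-2q\alpha p^2=p$, so the leading $O(p^{-2})$ part of the diamond covariance $c_2$ cancels. Feeding in the planar bounds yields $\Var[\hat\s]=O\!\left(\frac{\sft}{p^3}+\frac{d\,\sft}{p}\right)$, the claimed rate for $p>1/d$. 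For $p\le1/d$ I would instead use the coefficients of \prettyref{thm:neighborhood_clique}, which give $O(\frac{d\,\sft}{p^2})$. Taking the better of the two estimators and clamping at the trivial value $\sft(G)^2$ gives the stated $\left(\left(\frac{t}{p^3}\wedge\frac{td}{p^2}\right)\vee\frac{td}{p}\right)\wedge t^2$.

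\textbf{Lower bound.} Here I would invoke \prettyref{thm:mainlb} with $f=\sft$ on a graph consisting of $M=s/m$ disjoint copies of a small planar pair $(H,H')$ whose low-order neighborhood counts match. The base gadget is \prettyref{eq:adhocHH2}, which is already planar and has identical degree sequences, i.e.\ matches $\N(h,\cdot)$ for all $h$ with $\sfv_b(h)\le1$, while its triangle counts differ by one; by \prettyref{lmm:counts} one copy then satisfies $\TV=O(p^2)$. Three regimes must be produced. For very large $d$ I would use disjoint unions of fans / friendship graphs, in which many triangles share one high-degree vertex; these are precisely the graphs that make $N_1\asymp d\,\sft$ tight, and pairing such a graph with a degree-preserving, triangle-deficient planar partner yields the correlation term $\frac{td}{p}$. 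The factor $\frac{td}{p^2}$ (relevant when $d\lesssim p^{-1/3}$) comes from a lower-degree planar gadget handled as in the low-degree case of \prettyref{thm:neighborhood_main}, and the intermediate, fractional term $\frac{t}{p^{7/3}}$ arises by optimizing the size of a planar gadget against its total-variation cost; the $t^2$ ceiling is the usual two-point bound.

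\textbf{Main obstacle.} The hard part is the lower bound, and specifically the construction behind $\frac{t}{p^{7/3}}$. In the non-planar clique bound \prettyref{thm:neighborhood_main} one amplifies a matched pair by blowing up each vertex into an independent set and each edge into a complete bipartite graph, which multiplies the triangle gap by $\ell^3$ while keeping the neighborhood counts matched (and $\TV=O((\ell p)^2)$); this is what produces a clean exponent. For planar graphs this blow-up is unavailable, since $K_{\ell,\ell}$ is non-planar for $\ell\ge3$, so one is confined to planarity-preserving amplifications whose triangle gap, maximum degree, and total-variation distance trade off far less favorably. Identifying the planar gadget that maximizes triangle gap per unit of total variation, subject to planarity and the degree bound, is the crux; it is responsible both for the exponent $7/3$ and for the residual gap between $p^{7/3}$ and the upper bound's $p^3$, which the theorem leaves open.
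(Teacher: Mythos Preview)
Your upper bound is correct and is essentially the paper's argument. The paper also uses the estimator \prettyref{eq:cliqueest} with $\omega=3$, bounds $\n(\Bowtie,G)=O(d\,\sft(G))$ from planarity, and chooses $\alpha$ so that both $c_1$ and $c_2$ are $O(1/p)$. The paper takes $\alpha=\tfrac{1}{2qp^2}$ (which makes $1-2q\alpha p^2=0$) rather than your $\alpha=\tfrac{1}{2p^2}$ (which makes $1-2q\alpha p^2=p$); both kill the $O(p^{-2})$ part of $c_2$ and give the same $\Var[\hat\s]=O(\sft/p^3+d\,\sft/p)$. Your justification of the planar bowtie bound via outerplanarity of $G[N_G(v)]$ is in fact more careful than the paper's one-line remark.

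For the lower bound, your framework (apply \prettyref{thm:mainlb} to disjoint copies of a planar pair) is right, but the explicit gadgets are missing, and two of your proposed substitutes do not quite work. First, for the $\tfrac{td}{p}$ term the paper does \emph{not} match degree sequences: it takes $H$ to be $\ell$ triangles sharing a common edge and $H'$ to be $\ell$ wedges sharing two non-adjacent vertices, and bounds $\TV(P_{\tH},P_{\tH'})\le 1-q^2=O(p)$ via \prettyref{lmm:lower-complete} (remove the two high-degree vertices and the remainders are isomorphic). Trying instead to pair a friendship graph with a degree-preserving triangle-free planar partner is problematic, since any graph on $2k+1$ vertices with one vertex of degree $2k$ and all others of degree $2$ is forced to contain triangles. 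Second, for the $\tfrac{td}{p^2}$ term you invoke ``the low-degree case of \prettyref{thm:neighborhood_main}'', but as you note yourself that case relies on vertex blow-up into independent sets, which destroys planarity.

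What actually produces both $\tfrac{td}{p^2}$ and $\tfrac{t}{p^{7/3}}$ is a single explicit planar pair (\prettyref{lmm:lower-planar}): two graphs on $\Theta(\ell)$ vertices with \emph{matching degree sequences} and maximum degree $\ell+1$, built as specific disjoint unions of small planar pieces (fans, paths, triangles, and a few others, listed in a table), for which one checks $|\N(h,H)-\N(h,H')|=O(1)$ for every connected $h$ with $\sfv_b(h)=2$. Combined with \prettyref{lmm:subgraph_bound} this gives $\TV=O(p^2+p^3\ell^3)$ and $|\sft(H)-\sft(H')|\asymp\ell$. Optimizing $\ell$ then yields: take $\ell\asymp p^{-1/3}$ when $p>1/d^3$, giving the $\tfrac{t}{p^{7/3}}$ bound, and take $\ell\asymp d$ when $p\le 1/d^3$, giving $\tfrac{td}{p^2}$. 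The fractional exponent $7/3$ is thus not the outcome of an abstract optimization but of the specific $p^3\ell^3$ tail in the TV bound for this hand-built pair; your diagnosis of the obstacle is accurate, but the missing piece is precisely this construction.
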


\section{Numerical experiments} \label{sec:experiments}

We perform our experiments on both synthetic and real-world data. For the synthetic data, we take as our parent graph $ G $ a realization of an Erd\"os-R\'enyi graph $ \calG(N, \delta) $ for various choices of parameters. For the real-world experiment, we study the social networks of survey participants using a Facebook app \cite{snapnets}. This dataset contains 10 ego-networks (the closed neighborhood of a focal vertex (``ego") and any edges between vertices in its neighborhood) of various sizes, although we only use three of them as our parent graphs $ G $. The error bars in the following figures show the variability of the relative error of edges, triangles, and wedges over 10 independent experiments of subgraph and neighborhood sampling on a fixed parent graph $ G $. The solid black horizontal line shows the sample average and the whiskers show the mean $ \pm $ the standard deviation. 

Specifically, for subgraph sampling, we always use the HT estimator \prettyref{eq:HT}. 
For neighborhood sampling, for counting triangles or wedges, we use the estimator \prettyref{eq:hest} with choice of parameters given in \prettyref{thm:neighborhood_clique} and for counting edges we use the adaptive estimator in \prettyref{thm:adaptive}. 
The relative error for estimating the number of edges, triangles, and wedges are given in \prettyref{fig:edge}-- \prettyref{fig:wedge}, respectively.

	%
%
	%
	%
	%

As predicted by the variance bounds, the estimators based on neighborhood sampling perform better than subgraph sampling. Furthermore, there is markedly less variability across the 10 independent experiments in neighborhood sampling. In all plots, however, this variability decreases as $ p $ grows.
Furthermore, in accordance with our theory, counting bigger motifs (involving more vertices) is subject to more variability, which is evidenced in the plots for triangles and wedges by the wider spread in the whiskers.

\begin{figure}[!ht]
	\centering
	\subfigure[Facebook network (subgraph sampling).]
	{\label{fig:sub5} \includegraphics[width=0.4\textwidth]{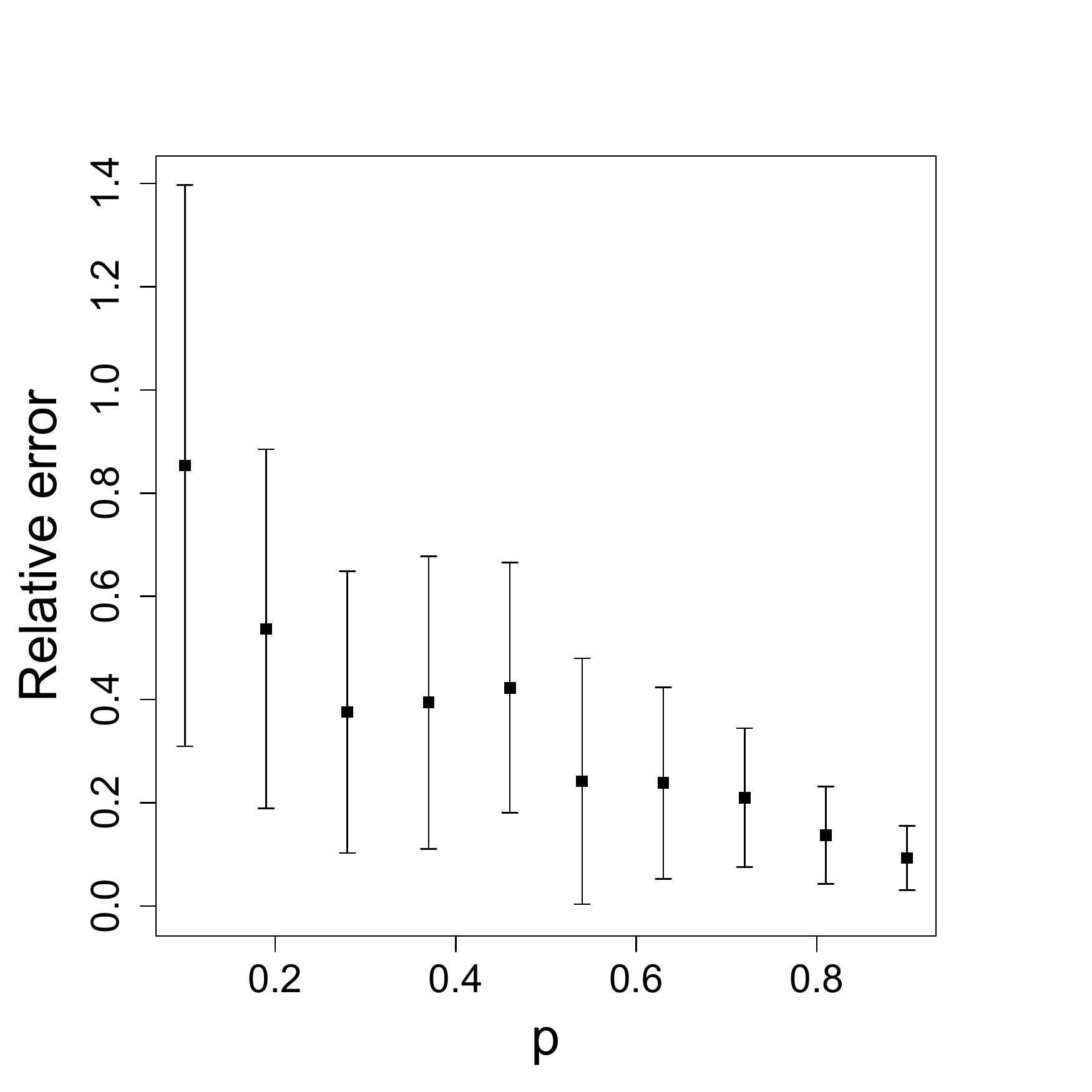}}
	~~
	\subfigure[Facebook network (neighborhood sampling).]%
	{\label{fig:sub6} 	\includegraphics[width=0.4\textwidth]{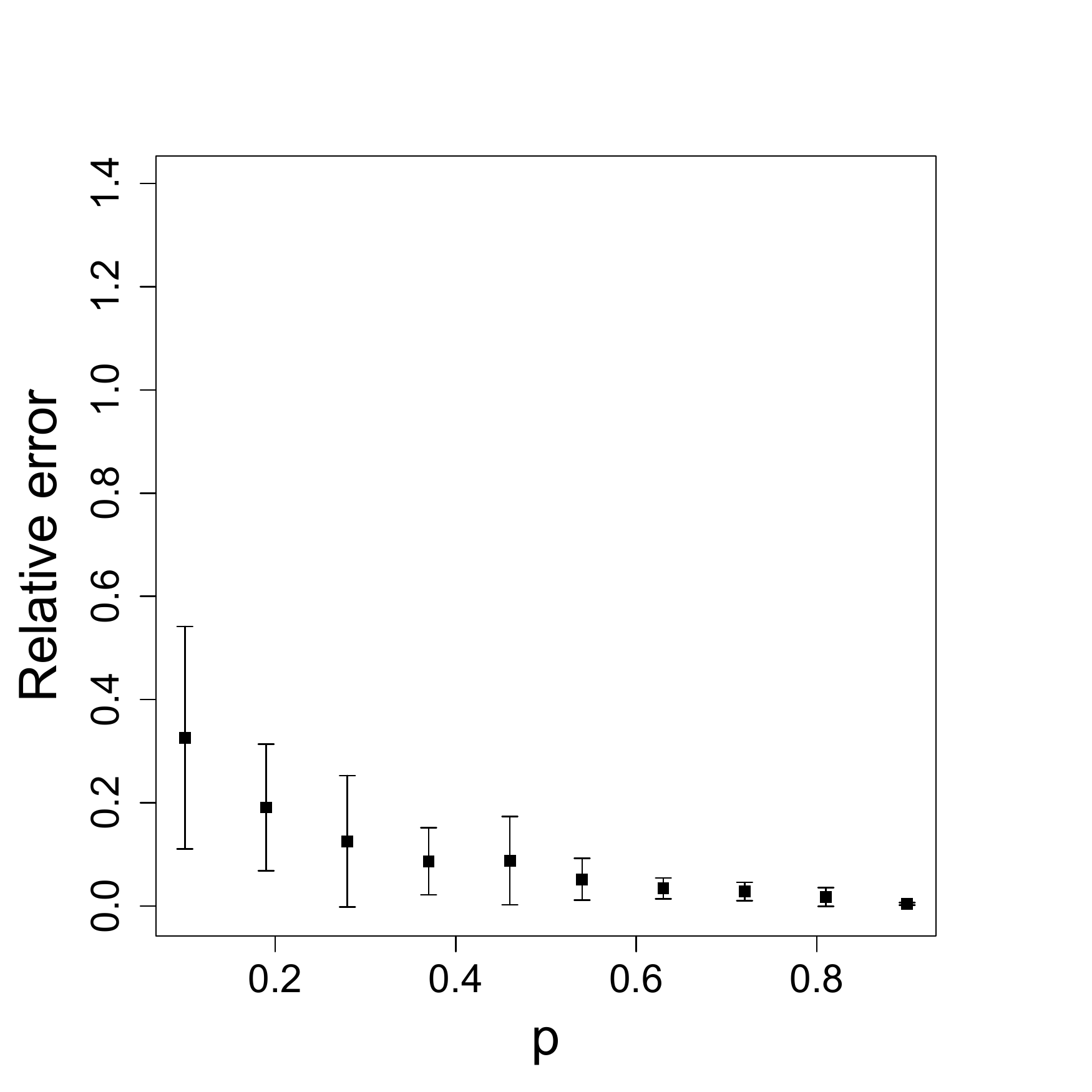}}
	\centering
	\subfigure[Erd\"os-R\'enyi graph (subgraph sampling).]
	{\label{fig:sub7} \includegraphics[width=0.4\textwidth]{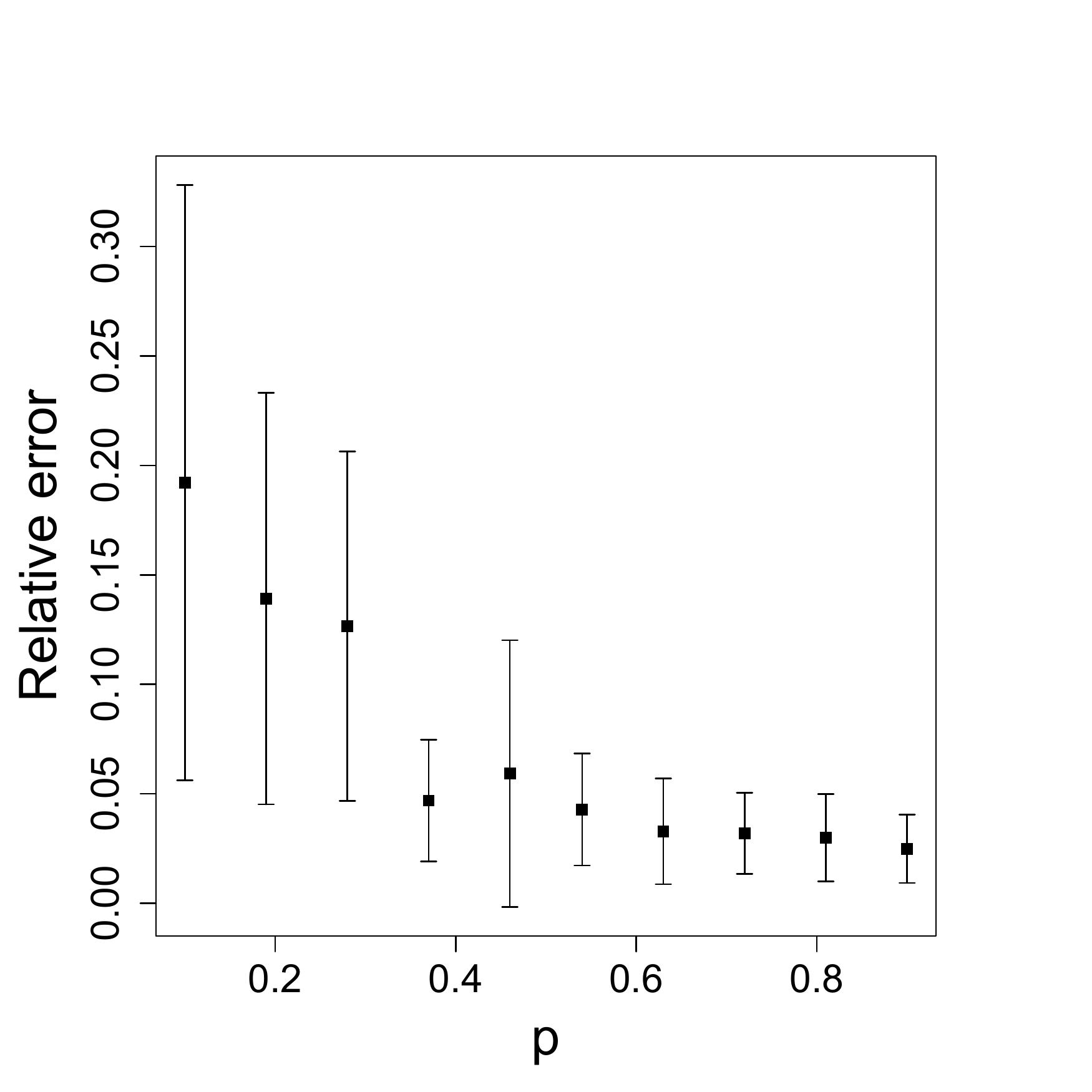}}
	~~
	\subfigure[Erd\"os-R\'enyi graph (neighborhood sampling).]%
	{\label{fig:sub8} 	\includegraphics[width=0.4\textwidth]{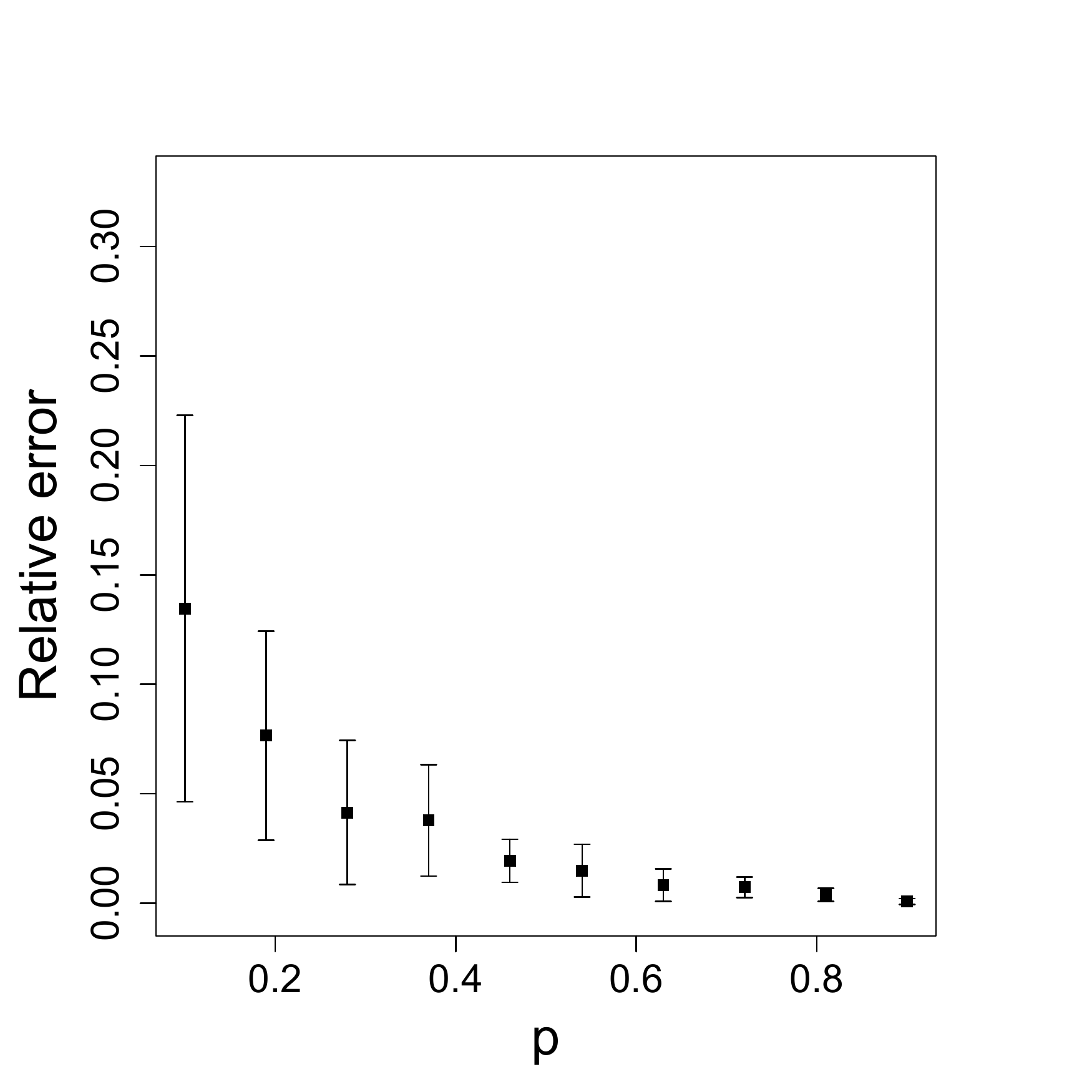}}
\caption{Relative error of estimating the edge count. In \prettyref{fig:sub5} and \prettyref{fig:sub6}, the parent graph $G$ is the Facebook network with $ d = 77 $, $ \sfv(G) = 333 $, $ \sfe(G) = 2519 $. In \prettyref{fig:sub7} and \prettyref{fig:sub8}, $G$ is a realization of the Erd\"os-R\'enyi graph $ \calG(1000, 0.05) $ with $ d = 12 $, and $ \sfe(G) = 2536 $.}
\label{fig:edge}
\end{figure}

\begin{figure}[!ht]
	\centering
	\subfigure[Facebook network (subgraph sampling).]
	{\label{fig:sub9} \includegraphics[width=0.4\textwidth]{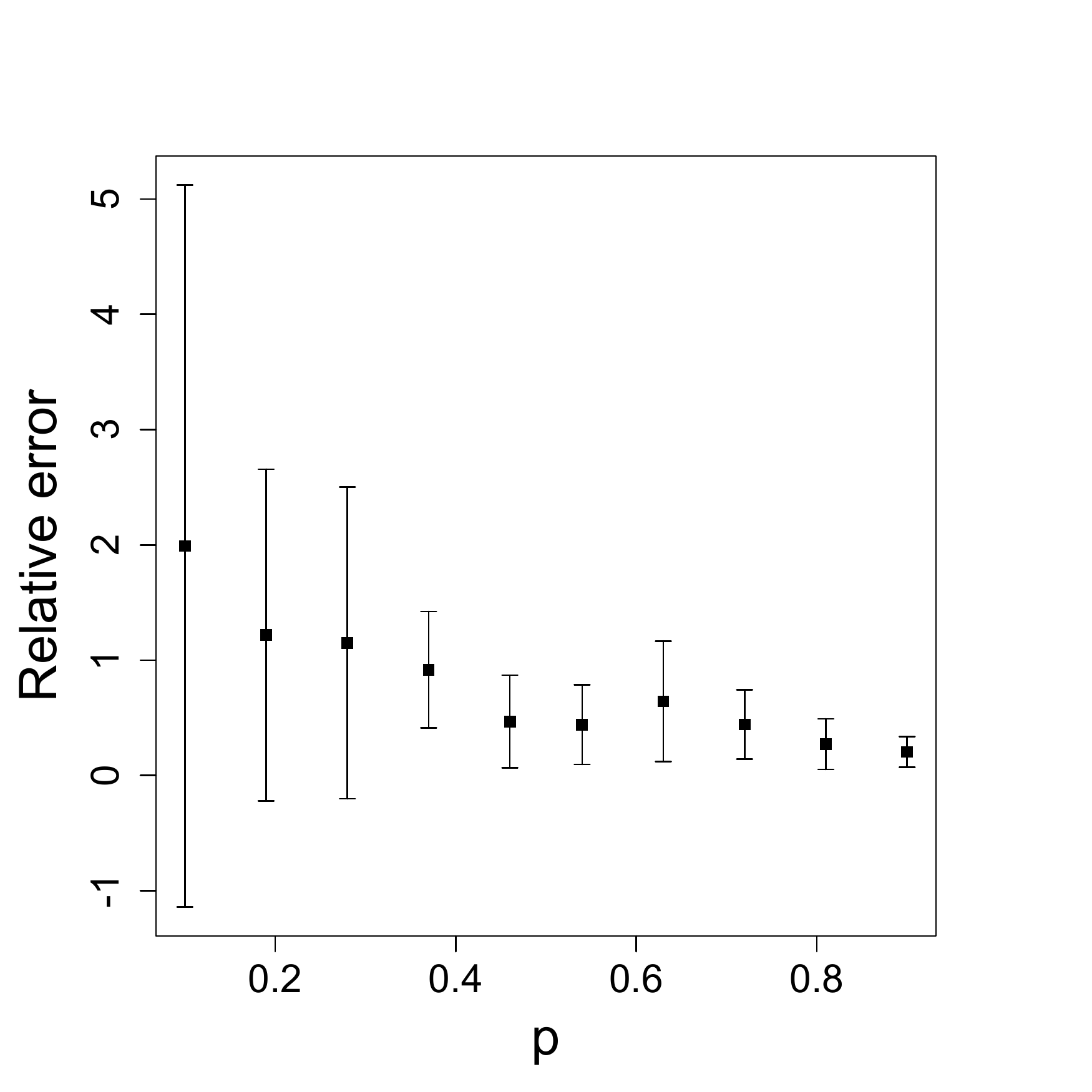}}
	~~
	\subfigure[Facebook network (neighborhood sampling).]%
	{\label{fig:sub10} 	\includegraphics[width=0.4\textwidth]{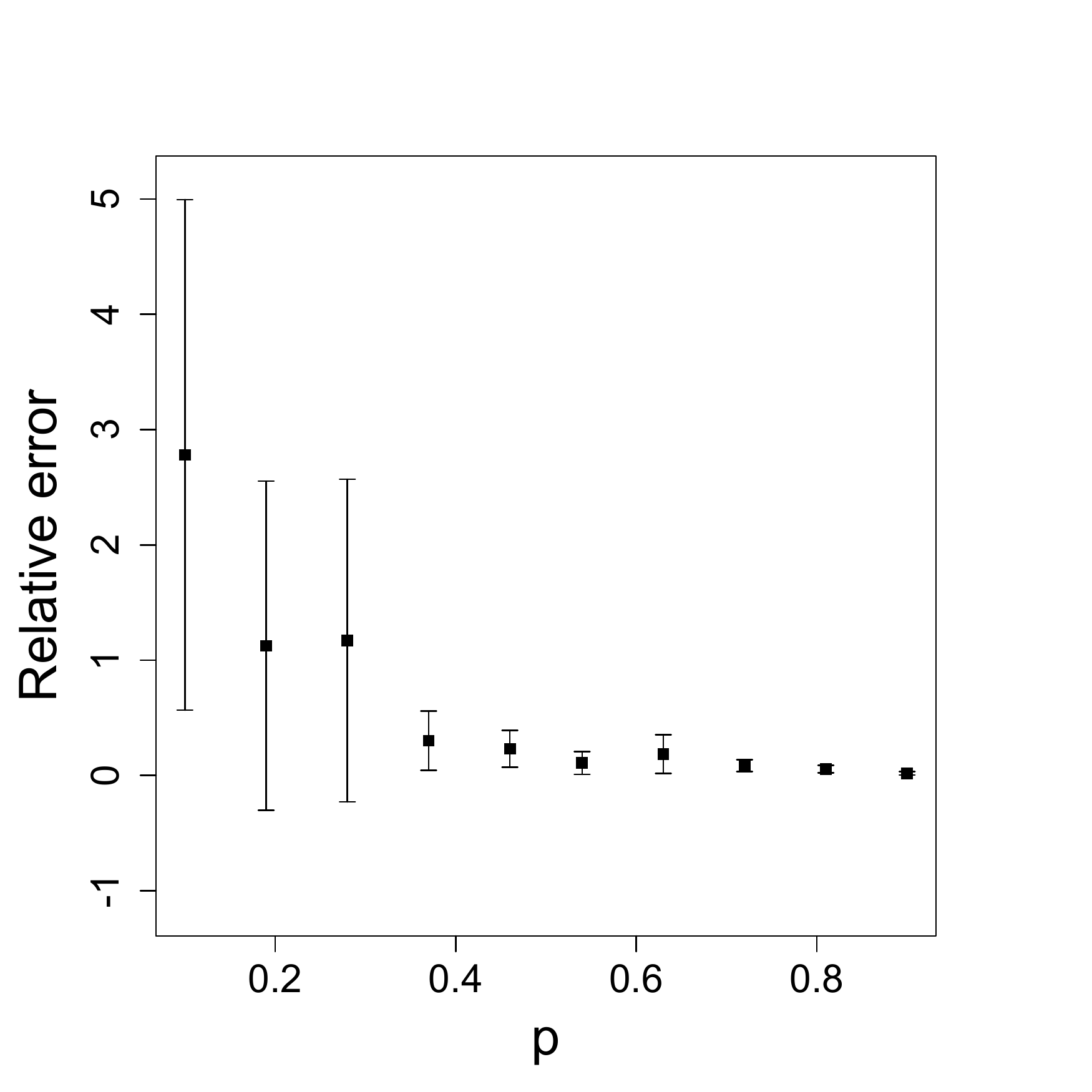}}
	\centering
	\subfigure[Erd\"os-R\'enyi graph (subgraph sampling).]
	{\label{fig:sub11} \includegraphics[width=0.4\textwidth]{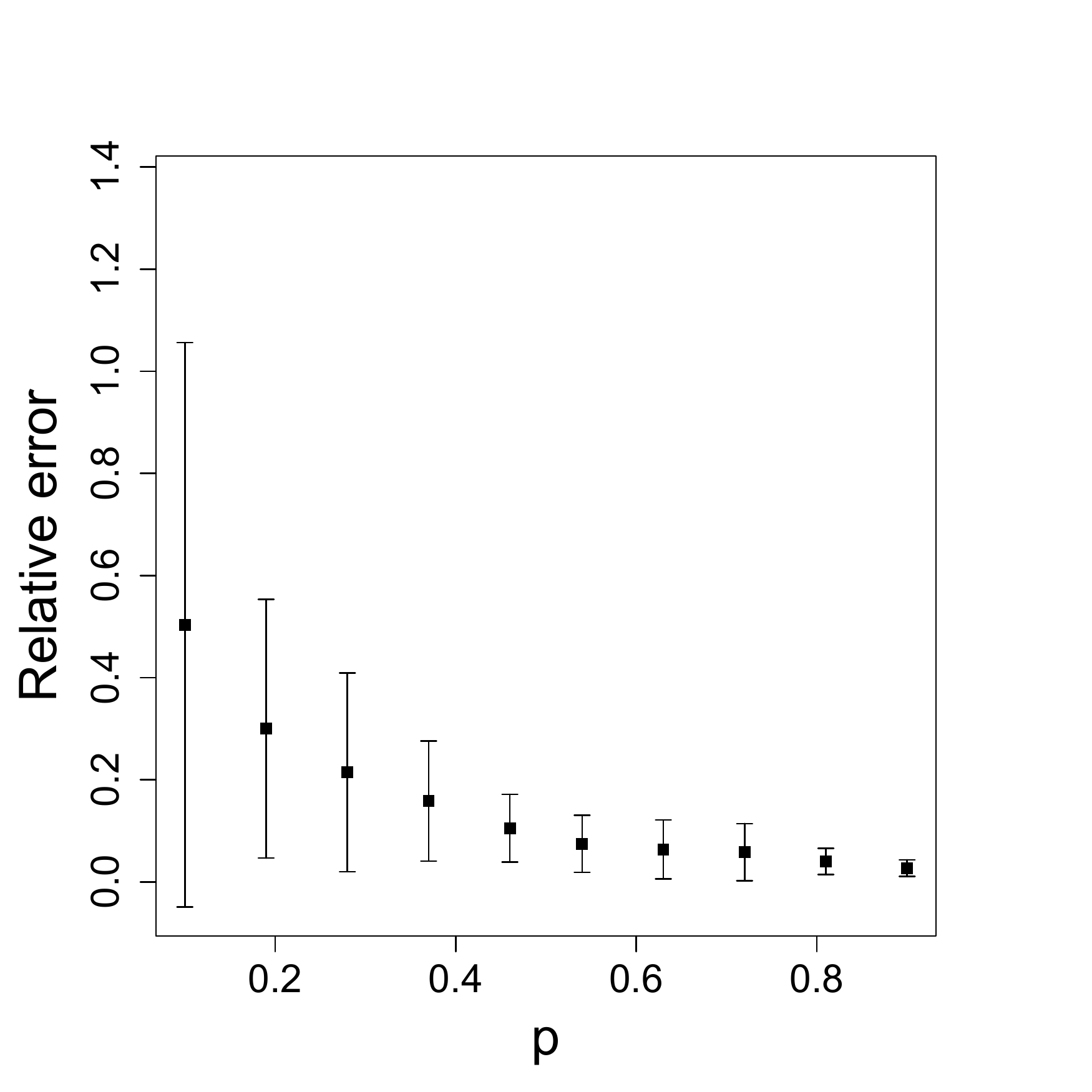}}
	~~
	\subfigure[Erd\"os-R\'enyi graph (neighborhood sampling).]%
	{\label{fig:sub12} 	\includegraphics[width=0.4\textwidth]{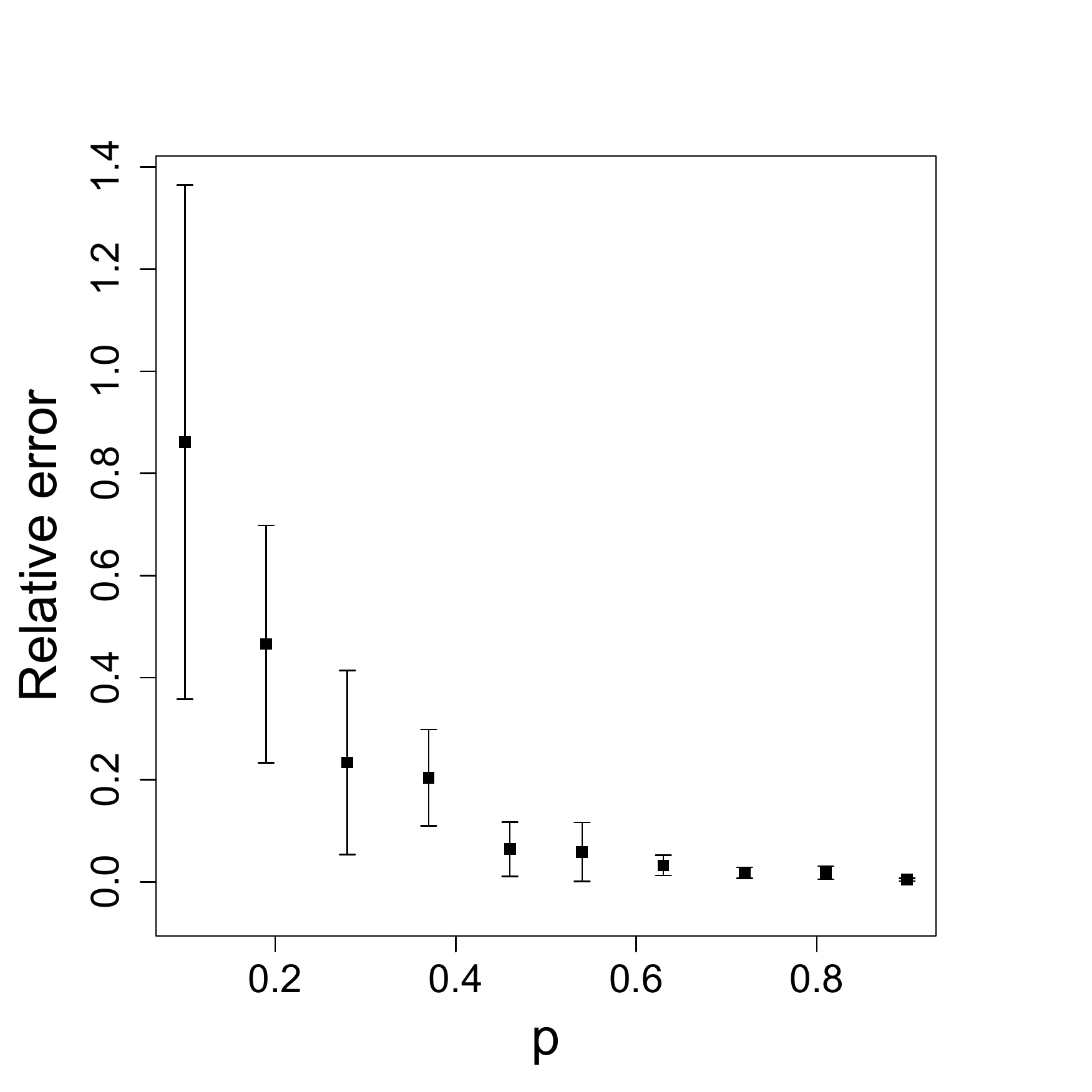}}
\caption{Relative error of counting triangles. In \prettyref{fig:sub9} and \prettyref{fig:sub10}, the parent graph is the Facebook network with $ d = 77 $, $ \sfv(G) = 168 $, $ \sft(G) = 7945 $. In \prettyref{fig:sub11} and \prettyref{fig:sub12}, the parent graph is a realization of the Erd\"os-R\'enyi graph $ \calG(1000, 0.02) $ with $ d = 35 $, and $ \sft(G) = 1319 $.}
\label{fig:triangle}
\end{figure}

\begin{figure}[!ht]
	\centering
	\subfigure[Facebook network (subgraph sampling).]
	{\label{fig:sub13} \includegraphics[width=0.4\textwidth]{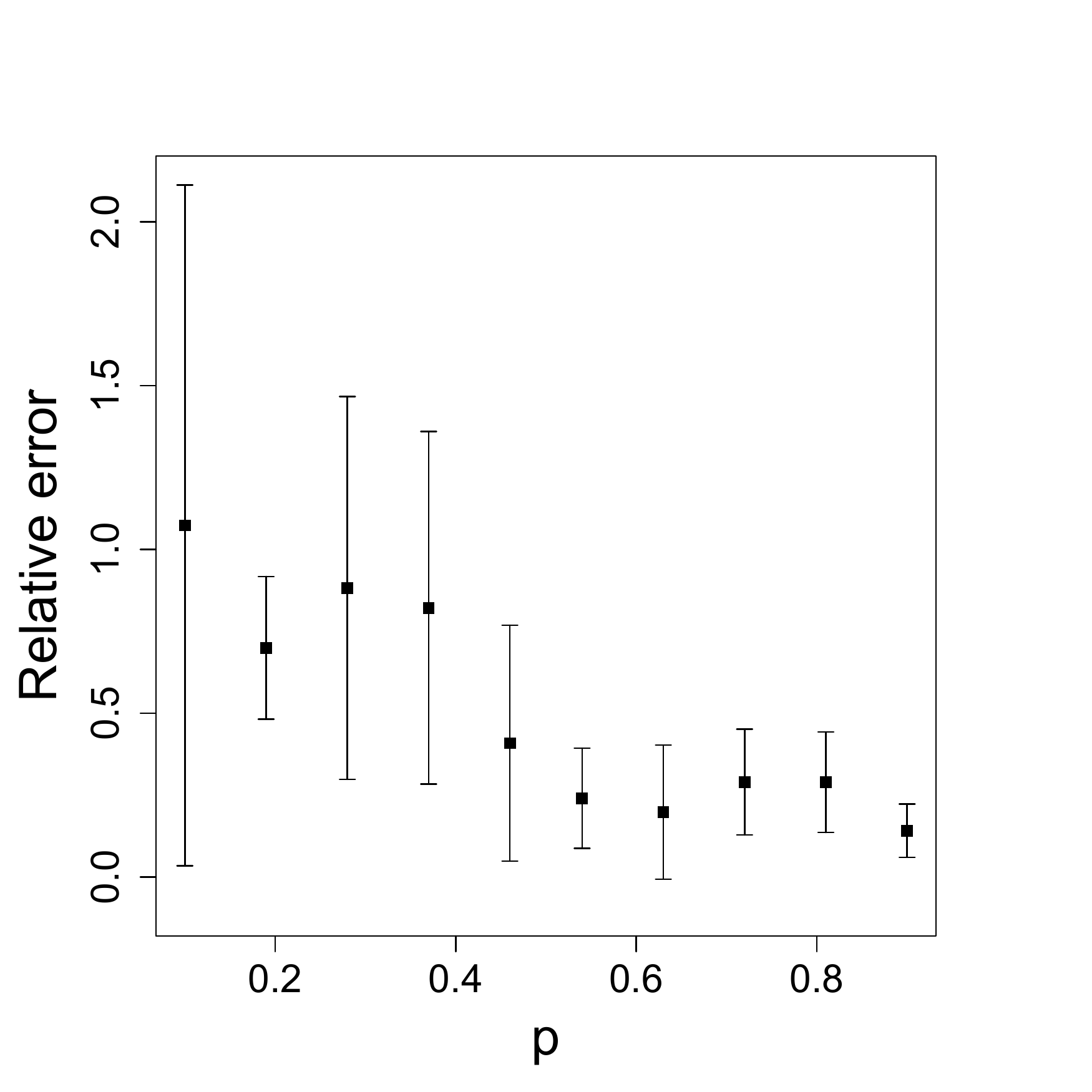}}
	~~
	\subfigure[Facebook network (neighborhood sampling).]%
	{\label{fig:sub14} 	\includegraphics[width=0.4\textwidth]{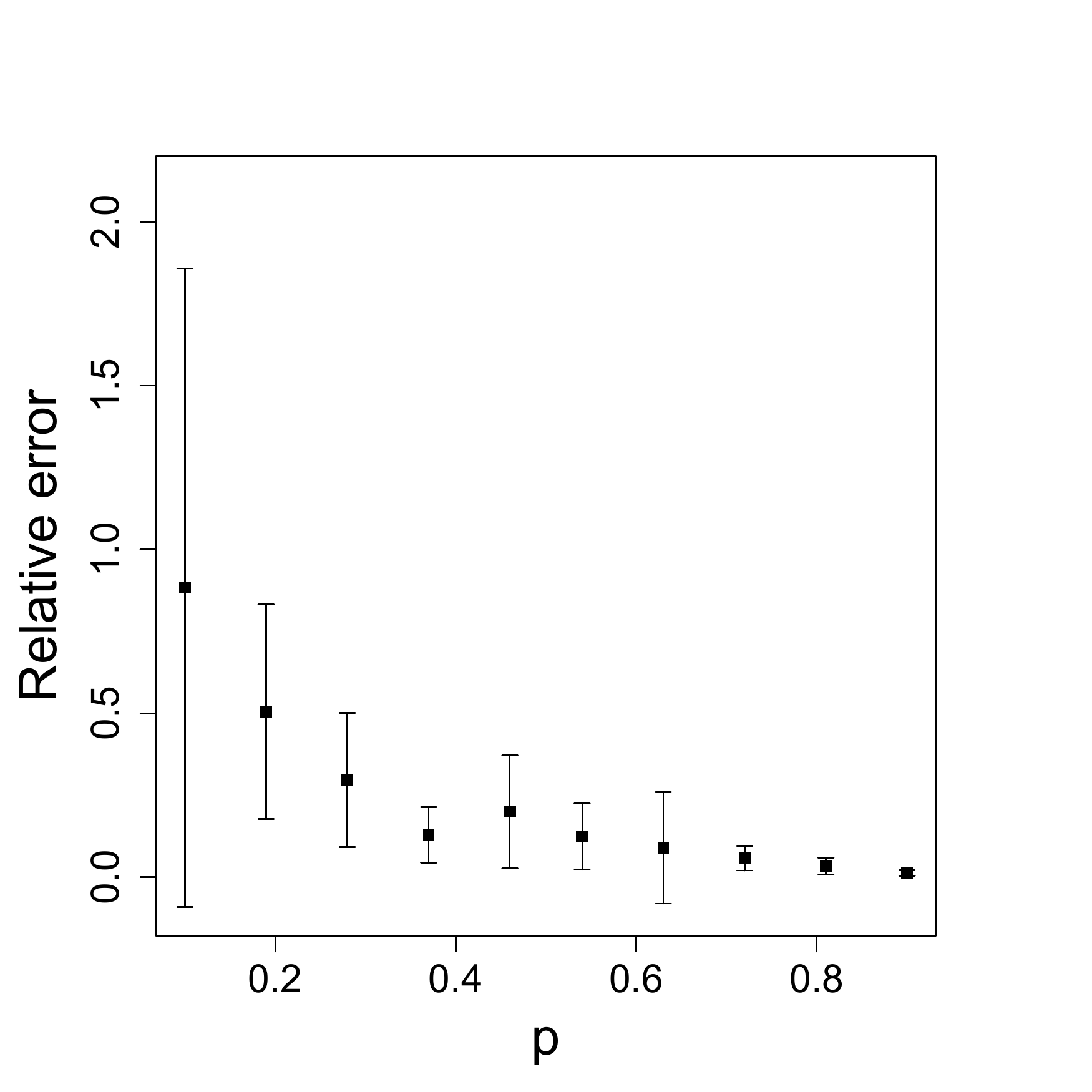}}
	\centering
	\subfigure[Erd\"os-R\'enyi graph (subgraph sampling).]
	{\label{fig:sub15} \includegraphics[width=0.4\textwidth]{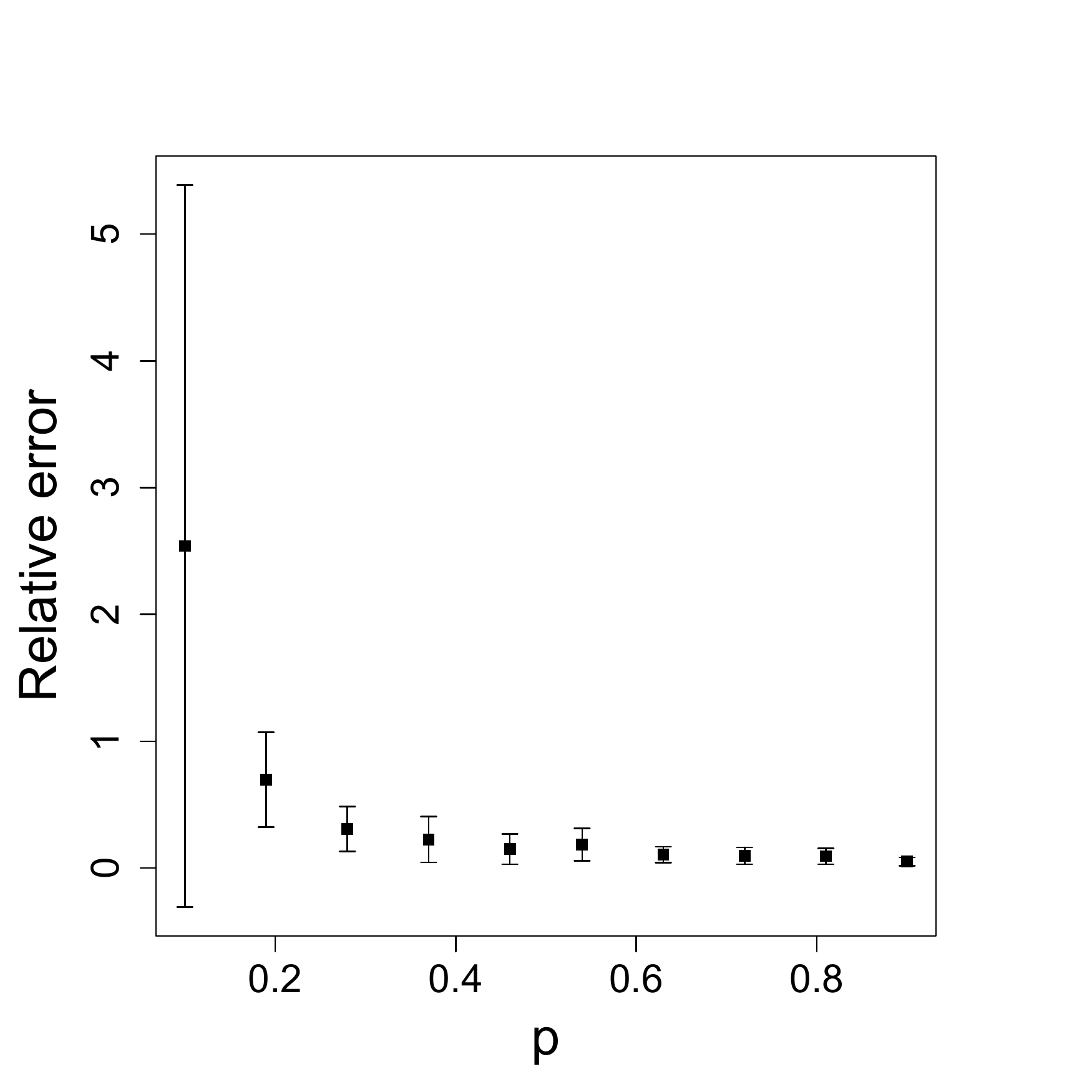}}
	~~
	\subfigure[Erd\"os-R\'enyi graph (neighborhood sampling).]%
	{\label{fig:sub16} 	\includegraphics[width=0.4\textwidth]{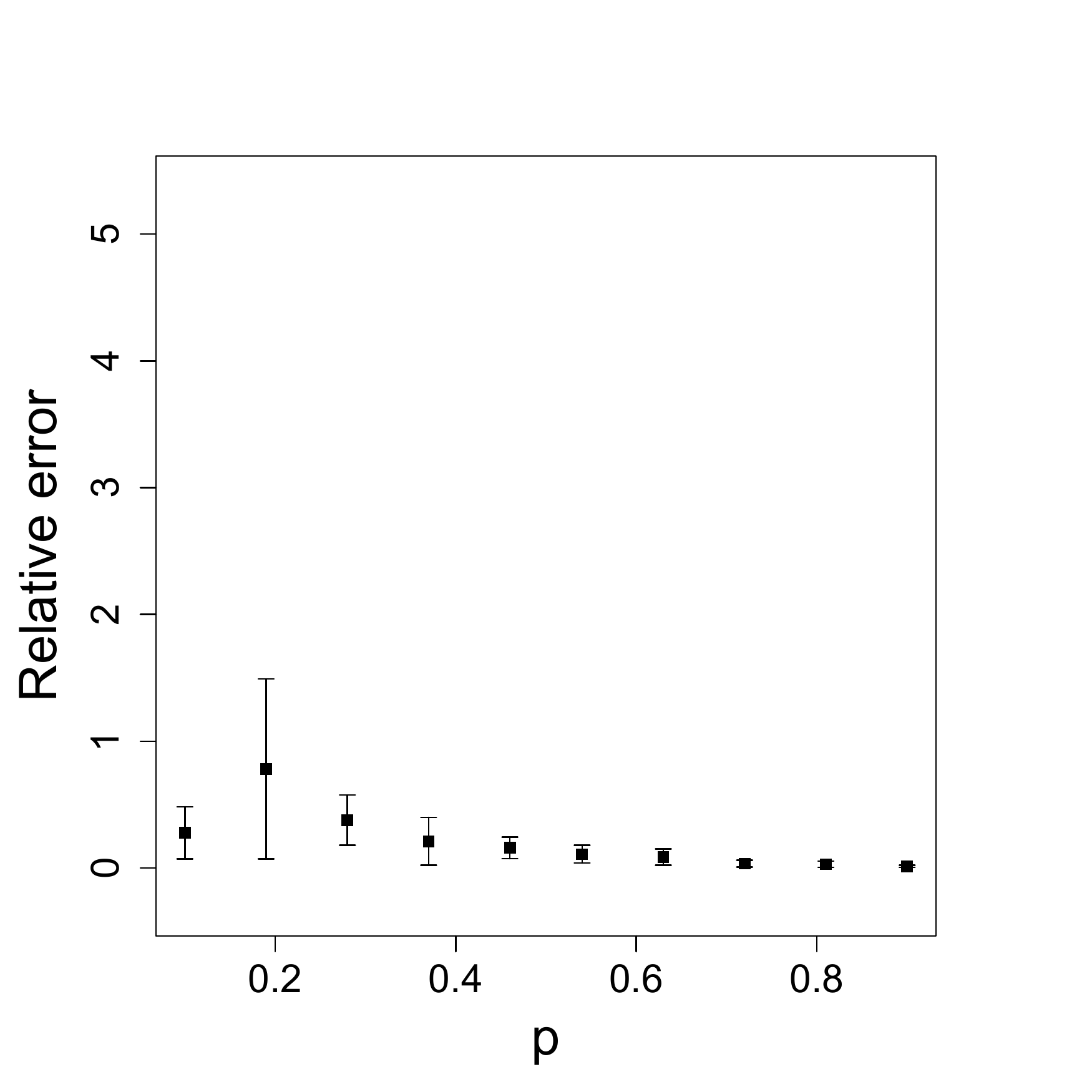}}
\caption{Relative error of counting wedges. In \prettyref{fig:sub13} and \prettyref{fig:sub14}, the parent graph is a Facebook network with $ d = 29 $, $ \sfv(G) = 61 $, $ \sfw(G) = 1039 $. In \prettyref{fig:sub15} and \prettyref{fig:sub16}, the parent graph is a realization of the Erd\"os-R\'enyi graph $ \calG(1000, 0.001) $ with $ d = 7 $, and $ \sfw(G) = 514 $.}
\label{fig:wedge}
\end{figure}

\section{Discussion}

We conclude the paper by mentioning a number of interesting questions that remain open:
\begin{itemize}
	\item As mentioned in the introduction, a more general (and powerful) version of the neighborhood sampling model is to observe a labeled radius-$r$ ball rooted at a randomly chosen vertex \cite{Lovasz12}. 
The current paper focuses on the case of $r=1$.	For $ r = 2 $, we note for example that a triangle could be observed simply by sampling only one of its vertices, i.e., \tikz[scale=0.75,baseline=(zero.base)]{\draw (0,0) node (zero) [vertexdot] {} -- (0.5,0) node[vertexdot] {} -- (0.25,{0.5*sin(60)}) node[vertexdotsolid] {} -- cycle;}. Thus, a Horvitz-Thompson type of estimator is $ \frac{1}{3p} \N(\tikz[scale=0.75,baseline=(zero.base)]{\draw (0,0) node (zero) [vertexdot] {} -- (0.5,0) node[vertexdot] {} -- (0.25,{0.5*sin(60)}) node[vertexdotsolid] {} -- cycle;}, \tG) $ and the variance scales as $1/p$. When $ p $ is small, this outperforms the neighborhood sampling counterpart ($ r = 1 $) in \prettyref{thm:neighborhood_clique}, where the variance scales as $1/p^2$.
Understanding the statistical limits of $r$-hop neighborhood sampling is an interesting and challenging research direction. In particular, the lower bound will potentially involve more complicated graph statistics as opposed neighborhood subgraph counts.
	
	\item In this paper we have focused on counting motifs as induced subgraphs. As shown in \prettyref{eq:injind}, subgraph counts can be expressed linear combinations of induced subgraph counts. However, this does not necessarily mean their sample complexity are the same. Although we do not have a systematic understanding so far, here is a concrete example that demonstrates this: consider estimating the number of (not necessarily) 4-cycles with neighborhood sampling. 
Note that to observe a $C_4$ one only need to sample the two diagonal vertices. Thus, a simple unbiased estimator is $\frac{1}{2p^2} \n(\tikz[scale=0.75,baseline=(zero.base)]{\draw (0,0) node (zero) [vertexdotsolid] {} -- (0.5,0) node[vertexdot] {} -- (0.5,0.5) node[vertexdotsolid] {} -- (0,0.5) node[vertexdot] {} -- cycle; }, \tG)$, whose variance scales as $O(1/p^2)$ and is much smaller than the best error rate for estimating induced $C_4$'s which scales as $1/p^3$, as given by \prettyref{thm:neighborhood_main}. 
The explanation for this phenomenon is that although we have the deterministic relationship $\n(\Square,G) = \s(\Square,G) + \s(\Diamond,G) + \s(\Kfour,G)$ and each of the three subgraph counts can be estimated at the rate of $p^{-3}$, the statistical errors cancel each other and result in a faster rate. 
	
\end{itemize}


\appendix
\section{Auxiliary lemmas}\label{app:appendix}

\begin{lemma}[Kocay's Edge Theorem for Colored Graphs] \label{lmm:nbhd}
Let $ h $ be a bicolored disconnected graph. Then $ \N(h, G) $ can be expressed as a polynomial, independent of $ G $, in $ \N(g, G) $, where $ g $ is bicolored, connected, and $ \sfv_b(g) \leq \sfv_b(h) $. Moreover, if $ \prod_{g\in\calG}\N(g, G)  $ is a term in the polynomial, then $ \sum_{g\in\calG}\sfv_b(g) \leq \sfv_b(h) $ and the corresponding coefficient is bounded by $ 3^{[\sfv_b(h)]^2} $. The number of terms in the polynomial representation is bounded by the number of $ \sfv_b(h) $-tuples $ (g_1, \dots, g_{\sfv_b(h)}) $ of all bicolored neighborhood subgraphs such that $ \sum_{i=1}^{\sfv_b(h)} \sfv_b(g_i) \leq \sfv_b(h) $ and $ \N(g_i, h) \neq 0 $.
\end{lemma}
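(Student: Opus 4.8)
The plan is to transcribe Kocay's edge-counting argument \cite{Kocay1982} (in the form of the proof of Proposition~5.6(b) of \cite{Lovasz12}) into the colored neighborhood category. The first step is to pass from the unlabeled counts $\N$ to a labeled version. For a bicolored neighborhood graph $g$, let $\Inj(g,G)$ denote the number of injective, color-preserving maps $\phi\colon V(g)\to V(G)$ that are \emph{saturated} neighborhood embeddings: $\phi$ sends edges to edges and, crucially, every black vertex $b$ satisfies $N_G(\phi(b))=\phi(N_g(b))$, so that the full $G$-neighborhood of a sampled vertex is realized inside the image. Since a white vertex is automatically sent off $S\triangleq\phi(V_b(g))$, one checks that $\phi$ is exactly an isomorphism onto $G\{S\}$; counting the $\aut(g)$ color-preserving isomorphisms onto each fixed copy then gives $\N(g,G)=\Inj(g,G)/\aut(g)$, so it suffices to prove the claim for $\Inj$.

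The second step is a product formula. Writing $h=h^{(1)}+\dots+h^{(k)}$ with the $h^{(i)}$ connected, I would classify the tuples $(\phi_1,\dots,\phi_k)$ counted by $\prod_i\Inj(h^{(i)},G)$ according to the partition $P$ of $\bigsqcup_i V(h^{(i)})$ recording which vertices share an image in $G$. Because each $\phi_i$ is injective, only \emph{transversal} partitions (no block meeting a component twice) occur; a block is colored black as soon as it absorbs a black vertex, and the saturation constraints of the vertices in such a block must then agree. Each fixed $P$ contributes exactly $\Inj(h/P,G)$ for the quotient neighborhood graph $h/P$, giving
\[
\prod_{i=1}^{k}\Inj(h^{(i)},G)=\sum_{P}\Inj(h/P,G),
\]
with the finest partition $\hat{0}$ producing the term $\Inj(h,G)$ itself.

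The third step is triangular inversion with induction on $\sfv(h)$. Isolating the $\hat{0}$ term yields $\Inj(h,G)=\prod_i\Inj(h^{(i)},G)-\sum_{P\neq\hat{0}}\Inj(h/P,G)$; every quotient $h/P$ with $P\neq\hat{0}$ has strictly fewer vertices and, since a block is black only when it absorbs a black vertex, satisfies $\sfv_b(h/P)\le\sfv_b(h)$. Applying the induction hypothesis to each $h/P$ (first decomposing a disconnected $h/P$ into its components) rewrites everything as a polynomial in connected $\Inj(g,\cdot)$, hence in connected $\N(g,\cdot)$, with $\sfv_b(g)\le\sfv_b(h)$; moreover in any surviving monomial the connected factors partition the black vertices of a quotient of $h$, so their black-sizes sum to at most $\sfv_b(h)$, which is the degree bound $\sum_{g\in\calG}\sfv_b(g)\le\sfv_b(h)$. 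The quantitative bounds are then bookkeeping: each appearing connected factor is a neighborhood subgraph realizable within $h$, so the number of monomials is at most the number of lists of connected pieces $g$ with $\N(g,h)\neq0$ whose black-sizes sum to at most $\sfv_b(h)$, exactly the tuple count in the statement; the coefficients are signed sums over the merge patterns of the inversion, crudely bounded by allowing three possibilities (keep separate, identify, or identify-and-recolor) for each of the at most $\sfv_b(h)^2$ ordered pairs of black vertices with their attached neighborhoods, giving $3^{[\sfv_b(h)]^2}$.

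I expect the main obstacle to be the second step, namely verifying that the color-and-saturation bookkeeping makes the product formula an \emph{exact} identity: one must confirm that merges mixing a black and a white vertex produce a legitimate saturated quotient, that the accompanying saturation conditions are forced to be compatible (else that $P$ contributes zero), and that every tuple is counted with the correct multiplicity. This is precisely where the neighborhood model departs from the plain subgraph setting underlying Kocay's theorem, and it is the part requiring the most care.
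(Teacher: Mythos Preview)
Your overall strategy—a Kocay-type product expansion followed by triangular inversion—matches the paper's, but the product formula in step two has a real gap, not merely bookkeeping. Take $h^{(1)}=h^{(2)}=\Halfedge$ and let $G=P_3$ be the path $a\,b\,c$. Then $\Inj(h^{(i)},G)=2$ (the black vertex must land on a leaf), so the product is $4$. Now consider the transversal partition $P$ that merges the two black vertices but keeps the two whites separate. Your intrinsic quotient $h/P$ is a wedge with black center and two white leaves, and $\Inj(h/P,G)=2$ (center to $b$). Yet no tuple $(\phi_1,\phi_2)$ realizes this $P$: saturation forces each $\phi_i$ to send its black vertex to a leaf of $G$, and if the two black images coincide then so must the white ones. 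Summing $\Inj(h/P,G)$ over all seven transversal partitions gives $10$, not $4$. The flaw is that merging vertices into a black block can enlarge that block's degree in the quotient beyond what the individual saturation constraints of its constituents permit; ``else that $P$ contributes zero'' is true for the tuple count, but $\Inj(h/P,G)$ is computed from the intrinsic quotient alone and need not vanish.

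The paper sidesteps this by never forming an abstract quotient. It works directly with $\N$ and the binary product $\N(g'_1,G)\,\N(g'_2,G)=\sum_g a_g\,\N(g,G)$, where $g$ ranges over bicolored neighborhood graphs with $\sfv_b(g)\le\sfv_b(g'_1)+\sfv_b(g'_2)$ and $a_g$ counts decompositions of $V_b(g)$ into two (possibly overlapping) pieces with $g\{\cdot\}$ restricted to each piece recovering $g'_1$ and $g'_2$. Here pairs of copies are grouped by the neighborhood subgraph $G\{S\}$ they actually generate in $G$, so the identity is exact by construction. The paper then peels off one connected component at a time and inducts on the black-vertex count of the remaining disconnected piece; this also delivers the clean per-step bound $a_g\le 3^{\sfv_b(h)}$ (each black vertex of $g$ lies in the first piece, the second, or both) and hence $3^{[\sfv_b(h)]^2}$ after at most $\sfv_b(h)$ steps. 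Your route can be salvaged by summing only over \emph{admissible} partitions—those in which every black block's quotient-neighborhood is already witnessed by each constituent black vertex within its own component—but that restriction must be stated and verified for the identity to hold.
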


\begin{proof}
For a disconnected graph $ g' $, note that $ g' $ can be decomposed into two graphs $ g'_1 $ and $ g'_2 $, where $ g'_1 $ is connected and $ \sfv_b(g'_2) \leq \sfv_b(g') - 1 $. Then,
\begin{equation}
\N(g'_1, G)\N(g'_2, G) = \sum_{g} a_g\N(g, G),
\end{equation}
where the sum runs over all graphs $ g $ with $ \sfv_b(g) \leq \sfv_b(g'_1) + \sfv_b(g'_2) = \sfv_b(g') $ and $ a_g $ is the number of decompositions of $ V(g) $ into $ V(g'_1) \cup V(g'_2) $ and $ V_b(g) $ into $ V_b(g'_1) \cup V_b(g'_2) $ (not necessarily disjoint) such that $ g\{ V_b(g'_1) \} \cong g'_1 $ and $ g\{ V_b(g'_2) \} \cong g'_2 $.


The only disconnected graph satisfying the above decomposition property for $ \sfv_b(g) = \sfv_b(g') $ is $ g \cong g' $, and hence
\begin{equation} \label{eq:base}
\N(g', G) =  \frac{1}{a_{g'}}\left[\N(g'_1, G)\N(g'_2, G) - \sum_{g} a_g\N(g, G) \right],
\end{equation}
where $ \sfv_b(g'_2) \leq \sfv_b(g')-1 $ and the sum ranges over all $ g $ that are either connected and $ \sfv_b(g) \leq \sfv_b(g') $ or disconnected and $ \sfv_b(g) \leq \sfv_b(g')-1 $. Furthermore, each $ a_g $ can be bounded by the number of ways of decomposing a set of size $ \sfv_b(g') $ into two sets (with possible overlap), or $ 3^{\sfv_b(g')} $.

We will now prove the following claim using induction.
Let $ h $ be a bicolored disconnected graph. For each $ k < \sfv_b(h) $,
\begin{equation} \label{eq:poly-rep1}
\N(h, G) = \sum_{\calG}c_{\calG}\prod_{g\in\calG}\N(g, G),
\end{equation}
where $ \calG $ contains at least one disconnected $ g' $ for which $ \sfv_b(g') \leq \sfv_b(h)-k $, $ \sum_{g\in \calG} \sfv_b(g) \leq \sfv_b(h) $, $ |c_{\calG}| \leq 3^{k\sfv_b(h)} $, and the number of terms is bounded by the number of $ k $-tuples $ (g_1, \dots, g_k) $ of all bicolored neighborhood graphs such that $ \sum_{i=1}^{k} \sfv_b(g_i) \leq \sfv_b(h) $ and $ \N(g_i, h) \neq 0 $.

The base case $ k = 1$ is established by decomposing $ h $ into two graphs $ h_1 $ and $ h_2 $ with $ h_1 $ connected and $ \sfv_b(h_2) \leq \sfv_b(h) - 1 $ and applying \prettyref{eq:base} with $ g' \cong h $, $ g'_1 \cong h_1 $, and $ g'_2 \cong h_2 $.

Next, suppose \prettyref{eq:poly-rep1} holds. Then applying \prettyref{eq:base} to each disconnected $ g' $, we have
\begin{align}
\N(h, G) & = \sum_{\calG} c_{\calG} \N(g', G)\prod_{g}\N(g, G) \nonumber \\
& = \sum_{\calG} \frac{c_{\calG}}{c_{g'}} [\N(g'_1, G)\N(g'_2, G) - \sum_{h'} c_{h'}\N(h', G)]\prod_{g}\N(g, G) \nonumber \\
& = \sum_{\calG} \frac{c_{\calG}}{c_{g'}} \N(g'_1, G)\N(g'_2, G)\prod_{g}\N(g, G) - \sum_{\calG}\sum_{h'} \frac{c_{\calG}c_{h'}}{c_{g'}}\N(h', G)\prod_{g}\N(g, G). \label{eq:poly-rep2}
\end{align}
Note that $ \sfv_b(g'_2) \leq \sfv_b(g') -1 \leq \sfv_b(h)-(k+1) $ and if $ h' $ is disconnected, then $ \sfv_{b}(h') \leq \sfv_b(g') -1 \leq \sfv_b(h)-(k+1) $. Finally, we observe that \prettyref{eq:poly-rep2} has the form
\begin{equation} \label{eq:poly-rep}
\sum_{\widetilde{\calG}} c_{\widetilde{\calG}} \prod_{g}\N(g, G),
\end{equation}
where $ \widetilde{\calG} $ contains at least one disconnected $ g' $ for which $ \sfv_b(g') \leq \sfv_b(h)-(k+1) $, $ \sfv_b(g') \leq \sfv_b(h)-(k+1) $, $ \sum_{g\in \widetilde{\calG}} \sfv_b(g) \leq \sfv_b(h) $, and $ |c_{\widetilde{\calG}}| \leq \left|\frac{c_{\calG}}{c_{g'}}\right| \vee \left|\frac{c_{\calG}c_{h'}}{c_{g'}} \right| \leq 3^{(k+1)\sfv_b(h)} $. The number of terms is bounded by the number of $ (k+1) $-tuples $ (g_1, \dots, g_{k+1}) $ of all bicolored neighborhood graphs such that $ \sum_{i=1}^{k+1} \sfv_b(g_i) \leq \sfv_b(h) $ and $ \N(g_i, h) \neq 0 $. Repeat this until $ k = \sfv_b(h) $ and so that the right hand side of \prettyref{eq:poly-rep1} contains no disconnected $ g $ in its terms.
\end{proof}

\begin{lemma} \label{lmm:subgraph_bound}
Let $ H $ and $ H' $ be two graphs on $ M $ vertices. Suppose there exists a constant $ B > 0 $ and positive integer $ k $ such that for each connected subgraph $ h $,
\begin{equation*}
|\N(h, H)-\N(h, H')|  \leq BM^{\sfv_{b}(h)-k}.
\end{equation*}
Then for each subgraph $ h $,
\begin{equation*}
|\N(h, H)-\N(h, H')|  \leq BQ_h\sfv_{b}(h)3^{[\sfv_{b}(h)]^2}M^{\sfv_{b}(h)-k},
\end{equation*}
where $ Q_h $ is the number of $ \sfv_{b}(h) $-tuples $ (g_1, \dots, g_{\sfv_{b}(h)}) $ of all bicolored neighborhood graphs such that $ \sum_{i=1}^{\sfv_{b}(h)} \sfv_b(g_i) \leq \sfv_{b}(h) $ and $ \N(g_i, H) \neq 0 $ or $ \N(g_i, H') \neq 0 $.
\end{lemma}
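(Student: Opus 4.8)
The plan is to reduce the statement for a general (possibly disconnected) bicolored neighborhood subgraph $h$ to the connected case supplied by the hypothesis, by combining the polynomial representation of \prettyref{lmm:nbhd} with a telescoping estimate. The key point is that \prettyref{lmm:nbhd} provides a representation of $\N(h,\cdot)$ whose coefficients are \emph{independent of the underlying graph}, so the same polynomial applies to both $H$ and $H'$ and the difference $\N(h,H)-\N(h,H')$ is the difference of that polynomial evaluated at the two families of connected counts. First I would write, via \prettyref{lmm:nbhd},
\[
\N(h,H) - \N(h,H') = \sum_{\calG} c_{\calG}\Big(\prod_{g\in\calG}\N(g,H) - \prod_{g\in\calG}\N(g,H')\Big),
\]
where each $\calG$ is a collection of connected bicolored neighborhood subgraphs satisfying $\sum_{g\in\calG}\sfv_b(g)\leq\sfv_b(h)$, each coefficient obeys $|c_{\calG}|\leq 3^{[\sfv_b(h)]^2}$, and the number of collections appearing is at most $Q_h$.

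Next I would estimate each product-difference by telescoping. Writing $\calG=\{g_1,\dots,g_r\}$,
\[
\prod_{i=1}^r\N(g_i,H) - \prod_{i=1}^r\N(g_i,H') = \sum_{j=1}^r\Big(\prod_{i<j}\N(g_i,H)\Big)\big(\N(g_j,H)-\N(g_j,H')\big)\Big(\prod_{i>j}\N(g_i,H')\Big).
\]
The middle factor is controlled by the hypothesis $|\N(g_j,H)-\N(g_j,H')|\leq BM^{\sfv_b(g_j)-k}$, which applies precisely because each $g_j$ is connected. For the remaining factors I would use the trivial uniform bound $\N(g,G)\leq\binom{M}{\sfv_b(g)}\leq M^{\sfv_b(g)}$, which holds because $\N(g,G)$ counts subsets of black (sampled) vertices of size $\sfv_b(g)$ inducing a fixed neighborhood subgraph. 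Consequently each telescoping summand is at most $BM^{\sum_i\sfv_b(g_i)-k}\leq BM^{\sfv_b(h)-k}$, the last inequality using $\sum_i\sfv_b(g_i)\leq\sfv_b(h)$.

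Finally I would assemble the pieces. Since every connected neighborhood subgraph carries at least one black vertex, the telescoping has $r\leq\sum_i\sfv_b(g_i)\leq\sfv_b(h)$ terms, so each product-difference is at most $\sfv_b(h)\,BM^{\sfv_b(h)-k}$; summing over the at most $Q_h$ collections, each weighted by $|c_{\calG}|\leq 3^{[\sfv_b(h)]^2}$, yields exactly $BQ_h\sfv_b(h)3^{[\sfv_b(h)]^2}M^{\sfv_b(h)-k}$. The genuine content of the argument is already packaged in \prettyref{lmm:nbhd} (the graph-independent polynomial representation with explicit coefficient and term-count bounds), so the remaining work here is the telescoping together with the exponent bookkeeping. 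The point requiring care—and the only new estimate needed—is that the individual counts $\N(g,G)$ may be as large as $M^{\sfv_b(g)}$ while only their \emph{differences} are small; the telescoping is precisely what isolates a single controlled difference factor per term, and the constraint $\sum_{g\in\calG}\sfv_b(g)\leq\sfv_b(h)$ is exactly what guarantees that the large product factors never push the exponent above $\sfv_b(h)-k$.
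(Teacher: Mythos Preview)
Your proposal is correct and follows essentially the same route as the paper: invoke \prettyref{lmm:nbhd} to express $\N(h,\cdot)$ as a polynomial in connected counts with graph-independent coefficients bounded by $3^{[\sfv_b(h)]^2}$ and at most $Q_h$ terms, telescope each product difference, apply the hypothesis to the single difference factor and the crude bound $\N(g,G)\leq M^{\sfv_b(g)}$ to the rest, and use $|\calG|\leq\sum_{g\in\calG}\sfv_b(g)\leq\sfv_b(h)$ to control both the exponent and the number of telescoping summands. The paper's proof is identical in structure and in the bookkeeping.
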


\begin{proof}
Let $ h $ be a disconnected subgraph. By \prettyref{lmm:nbhd}, $ \N(h, H) = \sum_{\calG_h} c_{\calG_h}\prod_{g\in\calG_h}\N(g, H) $, where $ \sum_{g\in\calG_h}\sfv_b(g) \leq \sfv_{b}(h) $, $ g $ is connected,  $ |c_{\calG_h}| \leq 3^{[\sfv_{b}(h)]^2} $, and the number of terms is bounded by $ Q_h $. Thus, using the fact that if $ x_1, \dots, x_n $ and $ y_1, \dots, y_n $ are positive real numbers, $ |x_1\cdots x_n - y_1\cdots y_n| \leq \sum_{i=1}^n|x_i - y_i |x_1\cdots x_{i-1}y_{i+1}\cdots y_n $, we have
\begin{align*}
|\N(h, H)-\N(h, H')|
& \leq \sum_{\calG_h}|c_{\calG_h}|\left|\prod_{g\in\calG_h}\N(g, H)-\prod_{g\in\calG_h}\N(g, H')\right| \\
& \leq \sum_{\calG_h}|c_{\calG_h}|\sum_{i}|\N(g_i, H)-\N(g_i, H')|\prod_{j\leq i-1}\N(g_j, H) \prod_{j\geq i+1}\N(g_j, H'),
\end{align*}
where $ \{ g_i \} $ is an ordering of $ \{ g \}_{g\in\calG_h} $. Next, we use the fact that $ \max\{\N(g, H), \N(g, H') \} \leq \binom{M}{\sfv_b(g)} \leq M^{\sfv_b(g)} $ to bound
\begin{equation*}
\sum_{i}|\N(g_i, H)-\N(g_i, H')|\prod_{j\leq i-1}\N(g_j, H) \prod_{j\geq i+1}\N(g_j, H') \leq B|\calG_h| M^{\sum_{g\in\calG_h} \sfv_b(g)-k}
\end{equation*}
Since $ |\calG_h| \leq \sum_{g\in\calG_h}\sfv_b(g) \leq \sfv_{b}(h) $, the above is further bounded by $ B\sfv_{b}(h)M^{\sfv_{b}(h)-k} $. Thus,
\begin{align*}
|\N(h, H)-\N(h, H')| & \leq B\sfv_{b}(h)M^{\sfv_{b}(h)-k}\sum_{\calG_h}|a_{\calG_h}| \\
& \leq BQ_h\sfv_{b}(h)3^{[\sfv_{b}(h)]^2}M^{\sfv_{b}(h)-k}.
\end{align*}
\end{proof}

Next we present two results on the total variation that will be used in the regime of $p>\frac{1}{d}$.
The main idea is the following: if a subset $T$ of vertices are not sampled, for subgraph sampling, in the observed graph we delete all edges incident to $T$, i.e., the edge set of $G\{T\}$, and for neighborhood sampling, we delete all edges within $T$, that is, the edge set of $G[T]$. Therefore, for two parent graphs, if missing $T$ leads to isomorphic graphs, then by a natural coupling, the total variation between the sampled graphs is at most the probability that $T$ is not completely absent in the sample.

\begin{lemma} \label{lmm:lower-star}
Let $ G_{\theta} = K_{A, \Delta-\theta} + K_{B, \Delta+\theta} $ for integer $ \theta $ between zero and $ \Delta $. 
Consider the neighborhood sampling model with sampling ratio $p$. Suppose $ |\theta - \theta'| \asymp \sqrt{\frac{\Delta}{p}} $ and both $ A $ and $ B $ are at most $ 1/p $. For neighborhood sampling with sampling ratio $ p $, there exists $ 0 < c < 1 $ such that
\begin{equation*}
\TV(P_{\tG_{\theta}}, P_{\tG_{\theta'}}) \leq  c.
\end{equation*}
\end{lemma}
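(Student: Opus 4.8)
The plan is to instantiate the coupling idea stated just above the lemma: exhibit a high-probability event on which the two neighborhood-sampled graphs become stochastically indistinguishable, and absorb its complement into the total variation. First I would place both graphs on a common vertex set. Writing $G_\theta = K_{A,\Delta-\theta}+K_{B,\Delta+\theta}$, I regard the size-$A$ and size-$B$ parts (the two ``small sides'', of total size $A+B$) as fixed across $\theta$, and the $2\Delta$ vertices forming the ``large sides'' as a common pool, of which the first $\Delta-\theta$ are attached to the $A$-side and the remaining $\Delta+\theta$ to the $B$-side. Passing from $\theta$ to $\theta'$ then merely reassigns $|\theta-\theta'|$ of these large-side vertices from one component to the other, while leaving the small sides untouched.

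The crux is to condition on the event $\mathcal{E}$ that no small-side vertex is sampled. Because there are exactly $A+B$ small-side vertices under both $G_\theta$ and $G_{\theta'}$, the event $\mathcal{E}$ carries the \emph{same} probability $(1-p)^{A+B}$ under both laws, and the hypotheses $A,B\leq 1/p$ and $p$ bounded away from one give $\Prob[\mathcal{E}]=(1-p)^{A+B}\geq (1-p)^{2/p}\geq c_0$ for an absolute constant $c_0>0$. On $\mathcal{E}$ every sampled vertex lies on a large side, and such a vertex is adjacent to precisely the (unsampled) small-side vertices of its own component; hence the observed union of stars is exactly $K_{A,X}+K_{B,Y}$, where $X$ and $Y$ count the sampled large-side vertices of the two components. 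Since the large-side vertices within a component are exchangeable (they share the same neighborhood), the colored isomorphism class of the sampled graph on $\mathcal{E}$ is a deterministic function of $(X,Y)$, and conditionally on $\mathcal{E}$ one has $X\sim\Binom(\Delta-\theta,p)$ and $Y\sim\Binom(\Delta+\theta,p)$ independently (and likewise with $\theta'$).

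By the data-processing inequality and subadditivity of total variation over product measures,
\[
\TV\big(P_{\tG_\theta}(\cdot\mid\mathcal{E}),P_{\tG_{\theta'}}(\cdot\mid\mathcal{E})\big)\leq \TV\big(\Binom(\Delta-\theta,p),\Binom(\Delta-\theta',p)\big)+\TV\big(\Binom(\Delta+\theta,p),\Binom(\Delta+\theta',p)\big).
\]
Each term compares two binomials with common success probability $p$ whose trial counts differ by $k=|\theta-\theta'|\asymp\sqrt{\Delta/p}$. I would estimate it elementarily: coupling $\Binom(n+k,p)$ as $\Binom(n,p)+Z$ with $Z\sim\Binom(k,p)$ independent, convexity of total variation gives $\TV(\Binom(n,p),\Binom(n+k,p))\leq \Expect[Z]\,\max_j\Prob[\Binom(n,p)=j]=kp\,\max_j\Prob[\Binom(n,p)=j]$, since a unit shift of a unimodal law costs exactly its modal probability. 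As $kp\asymp\sqrt{\Delta p}$ while $\max_j\Prob[\Binom(n,p)=j]\asymp(\Delta p)^{-1/2}$ in the nondegenerate regime $\Delta p\gtrsim 1$ (and the product is trivially $kp\lesssim\sqrt{\Delta p}\lesssim 1$ otherwise), the product is $O(1)$ and becomes a small constant once the implied constant in $|\theta-\theta'|\asymp\sqrt{\Delta/p}$ is taken small; this is permitted, as the lemma asserts only the existence of a suitable pair. Hence each binomial total variation is at most $c_1<1/2$, so the conditional laws differ by at most $2c_1<1$. Finally, since $\mathcal{E}$ has equal mass under both laws, joint convexity of total variation yields
\[
\TV(P_{\tG_\theta},P_{\tG_{\theta'}})\leq \Prob[\mathcal{E}]\cdot 2c_1+\Prob[\mathcal{E}^c]=1-\Prob[\mathcal{E}](1-2c_1)\leq 1-c_0(1-2c_1)<1.
\]

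I expect the main obstacle to be the structural reduction on $\mathcal{E}$ rather than the arithmetic: one must argue carefully that, conditioned on the small sides being entirely missed, the neighborhood-sampled graph retains no information beyond the two counts $(X,Y)$, which hinges on the exchangeability of large-side vertices within a component and on the common vertex identification making $\mathcal{E}$ a shared event of equal probability under both laws. The accompanying binomial estimate is the quantitative heart, and the construction is calibrated so that the mean gap $kp\asymp\sqrt{\Delta p}$ matches the standard deviation $\asymp\sqrt{\Delta p}$, whence the binomials overlap by a constant and cannot be separated in total variation.
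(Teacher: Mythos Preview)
Your proposal is correct and follows essentially the same route as the paper: condition on the event that none of the $A+B$ small-side vertices are sampled (probability $q^{A+B}\geq c_0$), reduce the conditional law of the sampled graph to a function of the independent pair $(X,Y)\sim\Binom(\Delta-\theta,p)\otimes\Binom(\Delta+\theta,p)$, and combine via the inequality $\TV\leq (1-q^{A+B})+q^{A+B}\,\TV(P_{(X,Y)},P_{(X',Y')})$ together with subadditivity over the product. The only difference is that you spell out an explicit bound $\TV(\Binom(n,p),\Binom(n+k,p))\leq kp\cdot\max_j\Prob[\Binom(n,p)=j]$ via the shift-coupling of a unimodal law, whereas the paper simply asserts that $|\theta-\theta'|\asymp\sqrt{\Delta/p}$ keeps each binomial total variation bounded away from one; both amount to the same standard deviation-versus-mean-shift heuristic.
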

\begin{proof}
Note that $ G_{\theta} $ is the union of two complete bipartite graphs. Suppose that none of the $ A + B $ ``left" side vertices are sampled. Then $ G_{\theta} $ can be described by $ K_{A, X} + K_{B, Y} + (2\Delta-(X+Y))K_1 $, where $ (X, Y) \sim \Binom(\Delta-\theta, p) \otimes \Binom(\Delta+\theta, p) $. Thus, if $ (X', Y') \sim \Binom(\Delta-\theta', p) \otimes \Binom(\Delta+\theta', p) $, then
\begin{equation*}
\TV(P_{\tG_{\theta}}, P_{\tG_{\theta'}}) \leq 1-q^{A+B} + q^{A+B}\TV(P_{(X, Y)}, P_{(X', Y')}).
\end{equation*}
Furthermore, observe that
\begin{equation*}
\TV(P_{(X, Y)}, P_{(X', Y')}) \leq \TV(P_X, P_{X'}) + \TV(P_Y, P_{Y'}),
\end{equation*}
where 
\begin{align*}
\TV(P_X, P_{X'}) & = \TV(\Binom(\Delta-\theta, p), \Binom(\Delta-\theta', p)), \\
\TV(P_Y, P_{Y'}) & = \TV(\Binom(\Delta+\theta, p), \Binom(\Delta+\theta', p)).
\end{align*}
This shows that if $ |\theta - \theta'| \asymp \sqrt{\frac{\Delta}{p}} $ and both $ A $ and $ B $ are $O(\frac{1}{p}) $, then $ \TV(P_{\tG_{\theta}}, P_{\tG_{\theta'}}) $ is less than a constant less than one.
\end{proof}

\begin{lemma} \label{lmm:lower-complete}
Let $ G $, $ H_1 $, and $ H_2 $ be an arbitrary graphs and let $ H = G \vee H_1 $ for and $ H' = G \vee H_2 $. If $ v = \sfv(H_1) = \sfv(H_2) \leq 1/p $, then for neighborhood sampling with sampling ratio $p$,
\begin{equation}
\TV(P_{\tH}, P_{\tH'}) \leq 1 - q^{v} \leq 1-q^{1/p},
\label{eq:tvHH}
\end{equation}
More generally, for $H=(V,E)$ and $H'=(V,E')$ defined on the same set $V$ of vertices, if $T\subset V$ is such that $(V\backslash T, E\backslash E(H[T]))$ and $(V\backslash T, E'\backslash E(H'[T]))$ are isomorphic, then \prettyref{eq:tvHH} holds with $v=|T|$.
\end{lemma}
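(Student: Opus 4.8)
The plan is to bound the total variation by constructing an explicit coupling of the two neighborhood–sampling experiments that succeeds on the event that the distinguished set $T$ is entirely missed by the sample. Write $S$ for the (common) random sample, so that each vertex is retained independently with probability $p$, and set $\calE \triangleq \{S\cap T=\emptyset\}$, which has probability $q^{v}$ since $|T|=v$. First I would reduce everything to the single claim that, \emph{conditioned on} $\calE$, the sampled graphs $\tH$ and $\tH'$ have the same law as isomorphism classes of bicolored graphs. Granting the claim, decompose $P_{\tH}=q^{v}P_{\tH\mid\calE}+(1-q^{v})P_{\tH\mid\calE^{c}}$ and likewise for $H'$; since the $\calE$--components coincide, the distance collapses to the $\calE^{c}$--components, giving
\[
\TV(P_{\tH},P_{\tH'}) = (1-q^{v})\,\TV(P_{\tH\mid\calE^{c}},P_{\tH'\mid\calE^{c}}) \le 1-q^{v},
\]
and then $q^{v}\ge q^{1/p}$ because $v\le 1/p$ and $q\in(0,1)$, which is the stated bound.

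To prove the claim I would invoke the ``delete the edges inside $T$'' principle highlighted before the lemma, which is special to neighborhood sampling: an edge is observed only when at least one endpoint is sampled, so on $\calE$ no edge of $H[T]$ can ever appear, and the observed bicolored graph is unchanged if we replace $H$ by $H^{-}\triangleq(V,E\setminus E(H[T]))$. Formally $H\{S\}=H^{-}\{S\}$ whenever $S\cap T=\emptyset$, since $H$ and $H^{-}$ differ only in edges internal to $T$, none of which is incident to $S$; the same holds for $H'$ and $(H')^{-}\triangleq(V,E'\setminus E(H'[T]))$. The hypothesis is precisely that $H^{-}$ and $(H')^{-}$ are isomorphic, and I would use an isomorphism $\phi$ that maps $T$ onto $T$. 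Its restriction to $V\setminus T$ is then a bijection, so $S\mapsto\phi(S)$ is a measure--preserving bijection of the samples drawn from $V\setminus T$ (all retained i.i.d.\ with probability $p$), and $\phi$ carries the bicolored graph $H^{-}\{S\}$ to $(H')^{-}\{\phi(S)\}$, matching both adjacencies and colors. This yields $P_{\tH\mid\calE}=P_{\tH'\mid\calE}$ and proves the claim.

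The first assertion is the special case $T=V(H_1)=V(H_2)$: deleting the internal $T$--edges turns $H=G\vee H_1$ and $H'=G\vee H_2$ into the \emph{same} graph $G\vee(vK_1)$, so one may take $\phi=\id$, which fixes $T$ trivially. The main obstacle I anticipate is entirely in justifying the claim cleanly: one must check (i) that neighborhood sampling genuinely cannot expose an internal $T$--edge when $T$ is unsampled -- this is exactly where the model differs from subgraph sampling, which instead conceals every edge incident to $T$ -- and (ii) that the isomorphism witnessing $H^{-}\cong(H')^{-}$ can be chosen to respect the partition $(T,V\setminus T)$, so that transporting the sample through $\phi$ preserves both the sampling distribution and the vertex colors. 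Once these two points are in place, the remainder is the routine convexity bound on total variation together with the elementary inequality $q^{v}\ge q^{1/p}$.
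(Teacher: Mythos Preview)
Your argument is correct and is essentially an expanded version of the paper's two-sentence proof: couple via a common sample $S$, note that on $\{S\cap T=\emptyset\}$ the two neighborhood-sampled bicolored graphs coincide, and bound $\TV$ by $\Prob[S\cap T\neq\emptyset]=1-q^{v}$. Your concern (ii) about choosing a $T$-preserving isomorphism for the general statement is legitimate and is not addressed by the paper's own proof either (it simply says ``the second claim follows from the same argument''); for the first assertion---the only form actually invoked elsewhere in the paper---one has $H^{-}=(H')^{-}=G\vee(vK_1)$ on the nose, so $\phi=\id$ works and (ii) is vacuous.
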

\begin{proof}
Suppose that none of the $ v $ vertices in $ H_1 $ or $ H_2 $ are sampled. Then $ H_1 $ and $ H_2 $ are isomorphic to each other. Thus,
\begin{equation*}
\TV(P_{\tH}, P_{\tH'}) \leq \prob{\text{at least one vertex in} \;   H_1 \; \text{or} \; H_2 \; \text{is sampled}} = 1 - q^{v}.
\end{equation*}
The second claim follows from the same argument.
\end{proof}

The following lemma, which was used in the proof of Theorems \ref{thm:edge-forest} and \ref{thm:wedge-forest}, relies on a number-theoretic fact:
\begin{lemma} \label{lmm:moments}
There exist two sequences of integers $(\alpha_1,\ldots,\alpha_{k+1})$ and $(\beta_1,\ldots,\beta_{k+1})$ such that
\begin{equation*}
\sum_{x\in [k+1]} x^i \alpha_x = 0 \quad i = 0, 2, 3, \dots, k,
\end{equation*}
\begin{equation*}
\sum_{x=1}^{k+1} x^i \beta_x = 0 \quad i = 0,1, 3, \dots, k,
\end{equation*}
and
\begin{equation*}
\sum_{x\in [k+1]} x \alpha_x = \lcm(1, \dots, k+1),
\end{equation*}
\begin{equation*}
\sum_{x\in [k+1]} x^2 \beta_x  = \lcm^2(1, \dots, k+1),
\end{equation*}
where $\lcm$ stands for the  least common multiple.
Moreover, there exists universal constants $ A $ and $ B $ such that 
\begin{equation} \label{eq:size}
\sum_{x\in [k+1]} |\alpha_x| \leq A^k, \qquad \sum_{x\in [k+1]} |\beta_x| \leq B^k.
\end{equation}
\end{lemma}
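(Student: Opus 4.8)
The plan is to read the two systems as the statement that the signed measures $\mu=\sum_x \alpha_x\delta_x$ and $\nu=\sum_x\beta_x\delta_x$, supported on the nodes $\{1,\dots,k+1\}$, act on polynomials of degree at most $k$ by extracting one designated coefficient. Writing $L=\lcm(1,\dots,k+1)$, the $\alpha$-conditions say precisely that $\sum_x\alpha_x P(x)=L\cdot[t^1]P(t)$ for every $P$ of degree $\le k$, and the $\beta$-conditions say $\sum_x\beta_x P(x)=L^2\cdot[t^2]P(t)$. Since the $k+1$ nodes are distinct, the Vandermonde system has a unique rational solution, which I would read off from the Lagrange basis: with $p_x(t)=\prod_{y\ne x}\frac{t-y}{x-y}$ the degree-$k$ polynomial satisfying $p_x(y)=\indc{x=y}$, the reproduction identity $\sum_x x^i\,p_x(t)=t^i$ (valid for $i\le k$) forces the choices $\alpha_x=L\,[t^1]p_x(t)$ and $\beta_x=L^2\,[t^2]p_x(t)$. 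With these definitions \emph{all} the required identities follow at once, since $\sum_x x^i\alpha_x=L\,[t^1]\sum_x x^i p_x(t)=L\,[t^1]t^i=L\,\indc{i=1}$, and likewise $\sum_x x^i\beta_x=L^2\,\indc{i=2}$.

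The crux is integrality and size control, for which I would make the coefficients explicit via Vieta's formulas. The denominator is $\prod_{y\ne x}(x-y)=(-1)^{k+1-x}(x-1)!(k+1-x)!$, and since $\prod_{y\ne x}y=(k+1)!/x$ one gets $\prod_{y\ne x}y/\prod_{y\ne x}(x-y)=(-1)^{k+1-x}\binom{k+1}{x}$. The coefficient of $t$ in the numerator $\prod_{y\ne x}(t-y)$ equals $(-1)^{k-1}\big(\prod_{y\ne x}y\big)\sum_{z\ne x}\tfrac1z$, so that
\begin{equation*}
\alpha_x=(-1)^x\binom{k+1}{x}\sum_{z\in[k+1],\,z\ne x}\frac{L}{z}.
\end{equation*}
This is exactly where the least common multiple is forced on us: each $L/z$ is an integer because $z\le k+1$ divides $L$, and $\binom{k+1}{x}$ is an integer, so $\alpha_x\in\integers$. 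The parallel computation with the second elementary symmetric function yields $\beta_x=\pm\binom{k+1}{x}\sum_{z_1<z_2,\,z_i\ne x}\frac{L^2}{z_1z_2}$, and here integrality holds because $L^2/(z_1z_2)=(L/z_1)(L/z_2)$ is a product of two integers. This explains why $L$ is the correct normalizer for $\alpha$ and $L^2$ the correct one for $\beta$.

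Finally, for the size bound \prettyref{eq:size} I would estimate crudely: the inner sum defining $\alpha_x$ has at most $k+1$ terms, each at most $L$, so $|\alpha_x|\le(k+1)L\binom{k+1}{x}$ and $\sum_x|\alpha_x|\le(k+1)L\,2^{k+1}$; similarly $\sum_x|\beta_x|\le\binom{k+1}{2}L^2\,2^{k+1}$. Invoking the Chebyshev-type bound $L=\lcm(1,\dots,k+1)\le C^{k+1}$ for an absolute constant $C$, both sums are bounded by $A^k$ and $B^k$ after absorbing the polynomial prefactors into the exponential base by enlarging the constants. The only genuinely delicate point is the integrality claim; the explicit formulas above isolate precisely why $L$ (resp.\ $L^2$) clears every denominator, and everything else is routine Vandermonde--Lagrange bookkeeping together with magnitude bounds, so I anticipate no further obstacle.
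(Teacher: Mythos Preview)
Your proof is correct and, after unwinding, produces exactly the same formulas as the paper: the paper writes $\alpha_x=L(\gamma_1-\tfrac{1}{x})(-1)^x\binom{k+1}{x}$ with $\gamma_1=\sum_z(-1)^{z+1}\binom{k+1}{z}/z$, and since this alternating sum is the harmonic number $H_{k+1}$, your expression $\alpha_x=(-1)^x\binom{k+1}{x}\sum_{z\ne x}L/z=L(H_{k+1}-\tfrac1x)(-1)^x\binom{k+1}{x}$ coincides with it; the $\beta$-formulas match similarly. The integrality argument (each $L/z$ is an integer) and the size bound via $\lcm(1,\dots,k+1)\le C^{k+1}$ are also identical to the paper's.

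The only difference is in the derivation. The paper argues somewhat opaquely from the finite-difference identity $\sum_{x=0}^{k+1}(-1)^x\binom{k+1}{x}D(x)=0$ for $\deg D\le k$ and then posits the $\gamma_i$-based formulas. Your Lagrange-interpolation framing is more transparent: it explains \emph{why} the solution must look like $[t^j]p_x(t)$ (uniqueness of the Vandermonde solution), makes the verification of all the moment identities a one-line consequence of $\sum_x x^i p_x(t)=t^i$, and pinpoints exactly where the normalizations $L$ and $L^2$ enter (clearing the Newton--Girard-type sums $\sum 1/z$ and $\sum 1/(z_1z_2)$). So the two proofs are the same in substance, but your presentation is cleaner and more self-explanatory.
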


\begin{proof}
We first introduce the quantity
\begin{equation*}
\gamma_i = \sum_{x=1}^{k}\frac{(-1)^{x+1}}{x^i}\binom{k}{x}.
\end{equation*}
The key observation is that $ \sum_{x=0}^{k+1}(-1)^x \binom{k+1}{x}D(x) = 0 $ for all polynomials $ D $ with degree less than or equal to $ k $. Hence we can set
\begin{align*}
\alpha_x = \left(\gamma_1-\frac{1}{x}\right)(-1)^x\binom{k+1}{x}\lcm(1, \dots, k+1)
\end{align*}
and
\begin{align*}
\beta_x = \left(\gamma^2_1-\gamma_2 - \frac{\gamma_1}{x} + \frac{1}{x^2}\right)(-1)^x\binom{k+1}{x}\lcm^2(1, \dots, k+1),
\end{align*}
where $ x = 1, 2, \dots, k+1 $.
A well-known number theoretic fact is that the least common multiple of the $k$ integers is in fact significantly smaller than their product. In fact, we have the estimates \cite{Nair1982}, \cite{Hanson1972}
\begin{equation*}
2^{k-1} \leq \lcm(1, \dots, k) \leq 3^k, \; \text{for all} \; k \geq 1,
\end{equation*} 
which shows \prettyref{eq:size}.
\end{proof}

\begin{lemma} \label{lmm:tv-edge-forest}
For the two graphs $ H $ and $ H' $ from \prettyref{thm:edge-forest} constructed with $(\alpha_1, \dots, \alpha_{k+1})$ from \prettyref{lmm:moments}, we have for neighborhood sampling with sampling ratio $ p $,
\begin{equation*}
\TV(P_{\tH}, P_{\tH'}) = O(pA^k + (p\ell A^k)^{k}),
\end{equation*}
provided $ p\ell A^k < 1 $.
\end{lemma}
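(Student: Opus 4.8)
The plan is to control $\TV(P_{\tH},P_{\tH'})$ through the same sample-size decomposition that underlies the neighborhood version of \prettyref{lmm:counts}, but now tracking the (only approximate) matching of the neighborhood subgraph counts rather than exploiting exact cancellation. Recall that $H$ and $H'$ are the forests from \prettyref{thm:edge-forest}, namely disjoint unions of stars whose numbers of leaves range over $[k+1]$, with signed multiplicities supplied by $(\alpha_1,\dots,\alpha_{k+1})$ (and an $\ell$-fold blow-up controlling the degree and size scale), chosen so that the two edge counts differ by $\sum_x x\alpha_x=\lcm(1,\dots,k+1)$ while the lower-order neighborhood statistics nearly agree. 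First I would condition on the number of sampled vertices: conditioned on exactly $\lambda$ vertices being sampled, $\tH$ is uniform over neighborhood subgraphs $h$ with $\sfv_b(h)=\lambda$ with mass $\N(h,H)/\binom{m}{\lambda}$ (and likewise for $\tH'$), exactly as in the proof of \prettyref{lmm:counts}, which yields
\[
\TV(P_{\tH},P_{\tH'})\;\le\;\frac12\sum_{\lambda\ge1}p^{\lambda}\,D_{\lambda},\qquad D_{\lambda}\triangleq\sum_{h:\,\sfv_b(h)=\lambda}\bigl|\N(h,H)-\N(h,H')\bigr|,
\]
so the whole problem reduces to estimating the count-difference sums $D_\lambda$.

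Next I would bound $D_1$ directly. At $\sfv_b=1$ the only connected neighborhood subgraphs are single black vertices of a prescribed degree, so $D_1$ is exactly the total discrepancy between the degree sequences of $H$ and $H'$. The moment identities of \prettyref{lmm:moments} give $\sum_x x\alpha_x=\lcm(1,\dots,k+1)$ for the degree-one discrepancy and $\sum_x|\alpha_x|\le A^k$ for the remaining degrees, whence $D_1=O(A^k)$; the level-one term $p^1 D_1$ therefore produces the $pA^k$ in the claimed bound.

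For $2\le\lambda\le k$ I would combine the moment conditions with the inclusion--exclusion expansion of $\N$ into ordinary subgraph counts $\n$ from \prettyref{lmm:incexc}. The conditions $\sum_x x^i\alpha_x=0$ for $i=0,2,\dots,k$ are designed so that the leading terms of each connected count difference cancel, leaving an estimate of the shape $|\N(h,H)-\N(h,H')|\le B\,m^{\sfv_b(h)-k}$ with $B=O_k(A^k\ell^{k-1})$ for connected $h$; \prettyref{lmm:subgraph_bound} then lifts this to all (including disconnected) $h$ at the cost of the combinatorial factor $Q_h\,\sfv_b(h)\,3^{[\sfv_b(h)]^2}$. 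Inserting these estimates into the displayed series and using the hypothesis $p\ell A^k<1$ to guarantee convergence shows that the sum is dominated by its first term and by its highest-order ($\sfv_b=k$) term, the latter contributing $(p\ell A^k)^k$.

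The hard part will be this last step. Because an unsampled (white) center has unknown degree, the inclusion--exclusion of \prettyref{lmm:incexc} ranges over neighborhood subgraphs of \emph{unbounded} size, so the count differences cannot simply be truncated termwise; the moment-matching of \prettyref{lmm:moments} is precisely what makes the resulting alternating sums telescope down to order $m^{\sfv_b-k}$. Carrying this cancellation through while correctly propagating the constants $\lcm(1,\dots,k+1)$, $A^k$, the binomial factors, and the blow-up $\ell$ into \prettyref{lmm:subgraph_bound}, and then verifying that the intermediate levels $2\le\lambda\le k-1$ are indeed dominated by the two extreme terms, is the delicate bookkeeping on which the proof hinges.
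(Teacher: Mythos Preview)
Your overall architecture is the paper's: decompose $\TV$ by the number $\sfv_b$ of sampled vertices, bound $|\N(g,H)-\N(g,H')|$ for connected $g$, lift to disconnected $h$ via \prettyref{lmm:subgraph_bound}, and cut the tail at $\sfv_b=k+1$ by $\prob{\Binom(m,p)\ge k+1}$. That part is fine.

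The gap is the bound you assert for connected $g$. You claim the moment identities force $|\N(g,H)-\N(g,H')|\le B\,m^{\sfv_b(g)-k}$ with $B=O_k(A^k\ell^{k-1})$, and then plan to feed this into \prettyref{lmm:subgraph_bound} with exponent $k$. But for a star with a \emph{black} (sampled) center this fails. If $g\simeq S_{\ell x}$ with black center and $\sfv_b(g)=\lambda$, then the center's degree is observed exactly, the indicator $\indc{u=\ell x}$ isolates a single $x$, and
\[
|\N(g,H)-\N(g,H')|=|\alpha_x|\binom{\ell x}{\lambda-1}\asymp |\alpha_x|\,\ell^{\lambda-1}.
\]
No moment cancellation is available here, and for small $\lambda$ (say $\lambda=2$) this is far larger than your $B\,m^{\lambda-k}$. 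So the cancellation you rely on happens only for the \emph{white}-center stars, where the hidden degree is averaged over $x$ and the identities $\sum_x x^i\alpha_x=0$ for $i=0,2,\dots,k$ collapse $\sum_x\alpha_x\binom{\ell x}{\lambda}$ to its $i=1$ term, of order $\ell$.

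The fix is both simpler and what the paper actually does: do not invoke \prettyref{lmm:incexc} at all. Because $H,H'$ are disjoint unions of stars (plus single edges), there are only four connected neighborhood-subgraph types (edge with one or two black endpoints; star with white center; star with black center), and one can compute $\N(g,\cdot)$ for each directly. The uniform connected bound one gets is the modest
\[
|\N(g,H)-\N(g,H')|\le 2a^k\bigl(\ell(k+1)\bigr)^{\sfv_b(g)-1},
\]
i.e.\ a saving of \emph{one} power of $\ell$, not $k$. One then applies \prettyref{lmm:subgraph_bound} with exponent $1$ (not $k$), sums the levels $1\le\sfv_b\le k$ to obtain $pA^k\sum_{v\ge0}(p\ell A^k)^v=O(pA^k)$ under $p\ell A^k<1$, and bounds the tail by $(p\ell A^k)^{k+1}$. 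Your plan to use the full exponent $k$ in \prettyref{lmm:subgraph_bound} is both unnecessary and unavailable.
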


\begin{proof}
There are four types of connected subgraphs of $ H $ and $ H' $: edge with one black vertex, edge with two black vertices, $ S_u $, $u > 1$ with white center, $ S_u $, $u > 1$ with black center. If $ g $ is an edge with one black vertex $ \N(g, H) = 2\ell\alpha + \ell\sum_{x=1}^{k+1} xw_x $ and $ \N(g, H') = 2\ell\alpha' + \ell\sum_{x=1}^{k+1} xw'_x $. If $ g $ is an edge with two black vertices $ \N(g, H) = \ell\alpha $ and $ \N(g, H') = \ell\alpha' $. If $ g \cong S_u $ with white center, then $ \N(g, H) = \sum_{x=1}^{k+1}w_x\binom{\ell x}{\sfv_b(g)} $ and $ \N(g, H') = \sum_{x=1}^{k+1} w'_x\binom{\ell x}{\sfv_b(g)} $ and furthermore,
\begin{align*}
|\N(g, H)-\N(g, H')| & =
\left|\sum_{x=1}^{k+1}w_x \binom{\ell x}{\sfv_b(g)} - \sum_{x=1}^{k+1}w'_x \binom{\ell x}{\sfv_b(g)}\right| \\
& = \frac{\ell}{\sfv_b(g)}\left|\sum_{x=1}^{k+1}xw_x - \sum_{x=1}^{k+1}xw'_x\right|.
\end{align*}
If $ g \cong S_u $ with black center, then 
\begin{align*}
\N(g, H)= & ~   \sum_{x=1}^{k+1}w_x\binom{\ell x}{\sfv_b(g)-1}\Indc\{ \ell x = u \}\\
\N(g, H') = & ~ 	\sum_{x=1}^{k+1} w'_x\binom{\ell x}{\sfv_b(g)-1}\Indc\{ \ell x = u \} 
\end{align*}
We find that $ |\N(g, H)-\N(g, H')| \leq 2a^k(\ell(k+1))^{\sfv_b(g)-1} $ and $ |\N(g, H)| \leq 2a^k(\ell(k+1))^{\sfv_b(g)} $. 

Let $ v = \sfv(H) = \sfv(H') \leq (\ell(k+1)+1)a^k $. Then
\begin{align*}
\TV(P_{\tH}, P_{\tH'}) \leq \frac{1}{2}\sum_{h:\sfv_{b}(h)\leq k}| \N(h, H) - \N(h, H')|p^{\sfv_{b}(h)}q^{v-\sfv_{b}(h)} + \prob{\Binom(v, p) \geq k+1},
\end{align*}
where the sum runs over all bicolored graphs with at most $ k $ black vertices. By \prettyref{lmm:subgraph_bound}, for each subgraph $ h $,
\begin{equation*}
|\N(h, H)-\N(h, H')|  \leq \sfv_{b}(h)3^{[\sfv_{b}(h)]^2}(2\sfv_{b}(h)a^k(k+3))^{\sfv_{b}(h)}(\ell(k+1))^{\sfv_{b}(h)-1},
\end{equation*}
where we used the bound $ Q_h \leq [\sfv_{b}(h)(k+3)]^{\sfv_{b}(h)} $.
Hence,
\begin{align*}
\TV(P_{\tH}, P_{\tH'}) & \leq \frac{1}{2}\sum_{h:1 \leq \sfv_{b}(h)\leq k}| \N(h, H) - \N(h, H')|p^{\sfv_{b}(h)}q^{v-\sfv_{b}(h)} + \prob{\Binom(v, p) \geq k+1} \\
& \leq \frac{1}{2}\sum_{h:1 \leq \sfv_{b}(h)\leq k}\sfv_{b}(h)3^{[\sfv_{b}(h)]^2}(2\sfv_{b}(h)a^k(k+3))^{\sfv_{b}(h)}(\ell(k+1))^{\sfv_{b}(h)-1}p^{\sfv_{b}(h)}q^{v-\sfv_{b}(h)}  + \\ & \qquad \prob{\Binom((\ell(k+1)+1)a^k, p) \geq k+1} \\
& \leq (pA^k) \sum_{v=0}^k (p\ell A^k)^{v} + \sum_{v=k+1}^{\infty} (p\ell A^k)^{v} \\
& = O(pA^k  + (p\ell A^k)^{k+1}),
\end{align*}
for some constant $ A > 0 $ and provided $ p\ell A^k < 1 $.
\end{proof}

\begin{lemma} \label{lmm:tv-broken-forest}
For the two graphs $ H $ and $ H' $ from \prettyref{thm:broken-forest} constructed with $(\beta_1, \dots, \beta_{k+1})$ from \prettyref{lmm:moments}, we have for neighborhood sampling with sampling ratio $ p $,
\begin{equation*}
\TV(P_{\tH}, P_{\tH'}) = O(pA^k + (p\ell A^k)^2 + (p\ell A^k)^{k}),
\end{equation*}
provided $ p\ell A^k < 1 $.
\end{lemma}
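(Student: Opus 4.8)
The plan is to follow the template of the proof of \prettyref{lmm:tv-edge-forest} almost verbatim; the only structural difference is that the weight sequence $(\beta_1,\dots,\beta_{k+1})$ annihilates the zeroth, first, and third through $k$th moments but \emph{preserves the second}, $\sum_{x\in[k+1]}x^2\beta_x=\lcm^2(1,\dots,k+1)$. Recall from \prettyref{thm:broken-forest} that $H$ and $H'$ are the disjoint unions of stars $\sum_x\max\{\beta_x,0\}\,S_{\ell x}$ and $\sum_x\max\{-\beta_x,0\}\,S_{\ell x}$, so that $S_{\ell x}$ carries signed multiplicity $\beta_x$. By \prettyref{lmm:moments} both graphs are forests with maximum degree $\ell(k+1)=d$ and $\sfv(H)=\sfv(H')=v=O(\ell A^k)$ since $\sum_x|\beta_x|\le B^k$. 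The vanishing of the zeroth and first moments already forces the total vertex and edge counts of $H$ and $H'$ to agree, since they equal $\sum_x\beta_x(\ell x+1)=0$ and $\sum_x\beta_x\,\ell x=0$; hence no auxiliary balancing is required.

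Starting from the standard estimate
\[
\TV(P_{\tH},P_{\tH'})\le \tfrac12\sum_{h:\,1\le \sfv_b(h)\le k}\big|\N(h,H)-\N(h,H')\big|\,p^{\sfv_b(h)}q^{v-\sfv_b(h)}+\prob{\Binom(v,p)\ge k+1},
\]
I would bound the tail term by $\binom{v}{k+1}p^{k+1}=O((p\ell A^k)^{k+1})=O((p\ell A^k)^{k})$, which accounts for the last term. The heart of the argument is to compute, for each \emph{connected} bicolored $g$, the discrepancy $\N(g,H)-\N(g,H')=\sum_x\beta_x\,\N(g,S_{\ell x})$, a linear combination of the moments $\sum_x x^i\beta_x$ with $i\le\sfv_b(g)$. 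Two bookkeeping facts drive everything. First, for $\sfv_b(g)\le 1$ the only surviving discrepancy comes from the fully sampled star $S_{\ell x}$ with black center, whose count differs by exactly $\beta_x=O(A^k)$, whereas the $\Halfedge$ discrepancy $\sum_x\beta_x\,\ell x$ vanishes; weighting by $p\,q^{v-1}$ and summing over $x$ produces the $pA^k$ term. Second, for $\sfv_b(g)=j\ge 2$ every moment except $i=2$ is killed, so the discrepancy equals the coefficient of $x^2$ in $\N(g,S_{\ell x})$ times $\lcm^2$, which is $O(\ell^2 A^k)$; for the white-centered star with $j$ black leaves this is $\sum_x\beta_x\binom{\ell x}{j}$, of exact order $\ell^2$.

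Propagating the connected discrepancies to disconnected $h$ via \prettyref{lmm:subgraph_bound} (with $M=v=O(\ell A^k)$ and $\max\{\N(g,H),\N(g,H')\}\le M^{\sfv_b(g)}$), a subgraph with $\sfv_b(h)=j\ge 2$ inherits a discrepancy of order $\ell^2 A^k\cdot M^{\,j-2}=O(\ell^{j}A^{k(j-1)})$; weighting by $p^{j}$ and summing the geometric series $\sum_{j\ge 2}(p\ell A^k)^{j}$ (which converges since $p\ell A^k<1$) yields the $(p\ell A^k)^2$ term, while the $j=1$ layer contributes $pA^k$ as above. Collecting the three pieces gives $\TV(P_{\tH},P_{\tH'})=O(pA^k+(p\ell A^k)^2+(p\ell A^k)^k)$.

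The main obstacle, exactly as in \prettyref{lmm:tv-edge-forest}, is the careful accounting of the disconnected neighborhood subgraphs: one must invoke \prettyref{lmm:subgraph_bound} (itself built on \prettyref{lmm:nbhd}) to keep all coefficients at $O(A^k)$ and to certify that products of connected counts never generate a discrepancy larger than $\ell^2 M^{\,j-2}$ at level $j$. The conceptual subtlety specific to the $\beta$-construction is that, unlike the edge case, the \emph{entire} $\sfv_b=1$ layer of star discrepancies cancels (both the zeroth and first moments vanish), so the leading genuine mismatch is pushed to the $\sfv_b=2$ layer; this is precisely what replaces the $p\ell A^k$ contribution of \prettyref{lmm:tv-edge-forest} by the quadratic term $(p\ell A^k)^2$, while the $pA^k$ term survives through the residual black-centered full-star counts.
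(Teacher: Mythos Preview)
Your plan follows the paper's proof closely and is essentially correct in spirit, but there is a gap in your treatment of the connected bicolored subgraphs $g$ with $\sfv_b(g)=j\ge 2$. The moment-cancellation argument you invoke---``every moment except $i=2$ is killed, so the discrepancy equals the coefficient of $x^2$ in $\N(g,S_{\ell x})$ times $\lcm^2$''---applies only when $\N(g,S_{\ell x})$ is a polynomial in $x$, i.e.\ only to the \emph{white-centered} star $g=S_j$ (with all $j$ black vertices as leaves). You have overlooked the black-centered stars at level $j\ge 2$: for $g=S_{\ell x_0}$ with black center and $j-1$ black leaves, one has $\N(g,S_{\ell x})=\binom{\ell x_0}{j-1}\Indc\{x=x_0\}$, which is not a polynomial in $x$, so no moment identity is available. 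The discrepancy is $|\beta_{x_0}|\binom{\ell x_0}{j-1}=O\bigl(A^k(\ell(k+1))^{j-1}\bigr)$, and for $j\ge 4$ this is genuinely larger than your claimed $O(\ell^2 A^k)$. Consequently, your direct application of \prettyref{lmm:subgraph_bound} with $B=O(\ell^2 A^k)$ and exponent $\sfv_b(g)-2$ is not justified as stated.

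The paper resolves this by bounding all connected discrepancies uniformly by $O\bigl(A^k(\ell(k+1))^{\sfv_b(g)-1}\bigr)$ (the hypothesis of \prettyref{lmm:subgraph_bound} with exponent $\sfv_b(g)-1$), which accommodates both white- and black-centered stars for $\sfv_b(g)\neq 2$, and then isolates the $\sfv_b(h)=2$ layer, whose direct contribution $O(A^k\ell^2 p^2)$ produces the $(p\ell A^k)^2$ term. You can repair your argument the same way; alternatively, you may keep the $k_0=2$ route but must then take $B=O(\ell^2 A^k+\ell A^{2k})$ so that the black-centered contributions at every level (and the $\sfv_b(g)=1$ level, which needs $\le B/M$) are covered. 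Either fix recovers the stated bound.
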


\begin{proof}
There are two types of connected subgraphs of $ H $ and $ H' $: $ S_u $, $u > 1$ with white center and $ S_u $, $u > 1$ with black center. If $ g \cong S_u $ with white center, then $ \N(g, H) = \sum_{x=1}^{k+1}w_x\binom{\ell x}{\sfv_b(g)} $ and $ \N(g, H') = \sum_{x=1}^{k+1} w'_x\binom{\ell x}{\sfv_b(g)} $ and furthermore, since $ \sum_{x=1}^{k+1}x^iw_x = \sum_{x=1}^{k+1}x^iw'_x $ for $ i = 0, 1, 3, \dots, \sfv_b(g) $,
\begin{align*}
|\N(g, H) - \N(g, H')| & = \left|\sum_{x=1}^{k+1}w_x \binom{\ell x}{\sfv_b(g)} - \sum_{x=1}^{k+1}w'_x \binom{\ell x}{\sfv_b(g)}\right|  \\
& = \frac{\ell^2}{\sfv_b(g)(\sfv_b(g)-1)}\left|\sum_{x=1}^{k+1}x^2w_x - \sum_{x=1}^{k+1}x^2w'_x\right|.
\end{align*}
If $ g \cong S_u $ with black center, then 
\begin{align*}
\N(g, H) = & ~  \sum_{x=1}^{k+1}w_x\binom{\ell x}{\sfv_b(g)-1}\Indc\{ \ell x = u \}\\
\N(g, H')= & ~ 	\sum_{x=1}^{k+1} w'_x\binom{\ell x}{\sfv_b(g)-1}\Indc\{ \ell x = u \}
\end{align*}
 We find that $ |\N(g, H)-\N(g, H')| \leq 2a^k(\ell(k+1))^{\sfv_b(g)-1} $ and $ |\N(g, H)| \leq a^k(\ell(k+1))^{\sfv_b(g)} $. 

Let $ v = \sfv(H) = \sfv(H') \leq (\ell(k+1)+1)a^k $. Then
\begin{align*}
\TV(P_{\tH}, P_{\tH'}) \leq \frac{1}{2}\sum_{h:\sfv_{b}(h)\leq k}| \N(h, H) - \N(h, H')|p^{\sfv_{b}(h)}q^{v-\sfv_{b}(h)} + \prob{\Binom(v, p) \geq k+1},
\end{align*}
where the sum runs over all bicolored graphs with at most $ k $ black vertices. By \prettyref{lmm:subgraph_bound}, for each subgraph $ h $ with $ \sfv_{b}(h) \neq 2 $,
\begin{equation*}
|\N(h, H)-\N(h, H')|  \leq \sfv_{b}(h)3^{[\sfv_{b}(h)]^2}(2\sfv_{b}(h)a^k(k+3))^{\sfv_{b}(h)}(\ell(k+1))^{\sfv_{b}(h)-1},
\end{equation*}
where we used the bound $ Q_h \leq [\sfv_{b}(h)(k+3)]^{\sfv_{b}(h)} $.
Hence,
\begin{align*}
\TV(P_{\tH}, P_{\tH'}) & \leq \frac{1}{2}\sum_{h:1 \leq \sfv_{b}(h)\leq k}| \N(h, H) - \N(h, H')|p^{\sfv_{b}(h)}q^{v-\sfv_{b}(h)} + \prob{\Binom(v, p) \geq k+1} \\
& \leq \frac{1}{2}\sum_{h:\sfv_{b}(h)\neq 2,\; \sfv_{b}(h) \leq k}\sfv_{b}(h)3^{[\sfv_{b}(h)]^2}(2\sfv_{b}(h)a^k(k+3))^{\sfv_{b}(h)}(\ell(k+1))^{\sfv_{b}(h)-1}p^{\sfv_{b}(h)}q^{v-\sfv_{b}(h)}  + \\ & \qquad a^k\ell^2p^2 + \prob{\Binom((\ell(k+1)+1)a^k, p) \geq k+1} \\
& \leq (pA^k) \sum_{v=0}^k (p\ell A^k)^{v} + (p\ell A^k)^2 + \sum_{v=k+1}^{\infty} (p\ell A^k)^{v} \\
& = O(pA^k  + (p\ell A^k)^2 + (p\ell A^k)^{k+1}),
\end{align*}
for some constant $ A > 0 $ and provided $ p\ell A^k < 1 $.
\end{proof}

\begin{lemma} \label{lmm:lower-planar}
There exists two planar graphs $ H $ and $ H' $ on order $ \ell $ vertices with matching degree sequences and maximum degree equal to $ \ell + 1 $ such that for neighborhood sampling with sampling ratio $ p $, $ \TV(P_{\tH}, P_{\tH'}) = O(p^2 + p^3\ell^3) $ and $ |\sfw(H) - \sfw(H')| = 3|\sft(H) - \sft(H')| \asymp \ell $ provided $ p = O(1/\ell) $.
Furthermore, there exists two planar graphs $ H $ and $ H' $ on order $ \ell $ vertices such that for neighborhood sampling with sampling ratio $ p $, $ \TV(P_{\tH}, P_{\tH'}) = O(p) $ and $ |\sft(H) - \sft(H')| \asymp \ell $.
\end{lemma}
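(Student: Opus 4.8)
The plan is to prove the two assertions separately, in each case exhibiting an explicit pair of planar graphs $H,H'$ on $\Theta(\ell)$ vertices and then bounding $\TV(P_{\tH},P_{\tH'})$ by the refined neighborhood‑count expansion used in \prettyref{lmm:tv-edge-forest} and \prettyref{lmm:tv-broken-forest}, namely
\[
\TV(P_{\tH},P_{\tH'}) \le \tfrac12 \sum_{j\ge 1}\ \sum_{h:\,\sfv_b(h)=j} |\N(h,H)-\N(h,H')|\, p^{j}q^{\,v-j},
\qquad v=\sfv(H)=\sfv(H').
\]
For the wedge/triangle identity, I will note that it comes for free once degree sequences match: since $\n(P_3,G)=\sum_v \binom{d_v}{2}$ is a function of the degree sequence and $\n(P_3,G)=\sfw(G)+3\sft(G)$ (cf.~the wedge identity in the introduction), matching degrees forces $\sfw(H)-\sfw(H')=3(\sft(H')-\sft(H))$, so $|\sfw(H)-\sfw(H')|=3|\sft(H)-\sft(H')|$.

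For the second (easier) statement I would take $H'=K_{2,\ell}$ on vertex set $\{a,b,w_1,\dots,w_\ell\}$ and let $H$ be the \emph{book} $B_\ell$, i.e.\ $H'$ together with the spine edge $\{a,b\}$. Both are planar, $\sft(H)=\ell$ (one triangle $a\,b\,w_i$ per page) while $\sft(H')=0$, so $|\sft(H)-\sft(H')|=\ell$. Since $H$ and $H'$ live on the same vertex set and differ only in the single internal edge of $T=\{a,b\}$, the general form of \prettyref{lmm:lower-complete} applies with $|T|=2$, giving $\TV(P_{\tH},P_{\tH'})\le 1-q^{2}=O(p)$. This is exactly the atom feeding the $\frac{td}{p}$ branch of \prettyref{thm:triangle-planar}.

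For the first statement the plan is: (i) arrange that $H,H'$ have matching degree sequences, which makes the $j=1$ term in the display vanish (the single‑black‑vertex counts $\N(\cdot)$ are precisely the degree sequence); (ii) design the pair so that the two‑vertex neighborhood statistics agree up to a bounded discrepancy, i.e.\ $\sum_{\sfv_b(h)=2}|\N(h,H)-\N(h,H')|=O(1)$, which forces the $j=2$ term to be $O(p^2)$; and (iii) bound all $j\ge 3$ terms crudely by the sampling tail $\Pr[\Binom(v,p)\ge 3]\le \binom{v}{3}p^{3}=O((\ell p)^{3})=O(\ell^{3}p^{3})$, using $v\asymp\ell$ and the hypothesis $p=O(1/\ell)$. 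Summing yields $\TV=O(p^2+\ell^{3}p^{3})$. The triangle gap $\asymp\ell$ must then be carried \emph{entirely} by the $O(1)$ slack at level two: because $\sft(G)=\tfrac13\sum_{xy\in E}|N(x)\cap N(y)|$ is determined by the multiset of two‑vertex types $(\deg x,\deg y,\mathbf 1\{x\sim y\},|N(x)\cap N(y)|)$, the $O(1)$ differing types must include one whose common‑neighborhood has size $\asymp\ell$ (a hub pair of degree $\asymp\ell+1$, matching the stated maximum degree). To pass from connected to all bicolored subgraphs at level two I would invoke \prettyref{lmm:subgraph_bound}/\prettyref{lmm:nbhd}, so that controlling the connected two‑black‑vertex discrepancies controls the full sum. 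Inserting the resulting atom into \prettyref{thm:mainlb} with $m\asymp|\sft(H)-\sft(H')|\asymp\ell$ and optimizing $\ell$ gives $\frac{\ell t}{\TV}\wedge t^2$; choosing $\ell\asymp p^{-1/3}$ makes $\ell^{3}p^{3}\asymp p^{2}$ and produces the $\frac{t}{p^{7/3}}$ rate, while $\ell\asymp d$ produces $\frac{td}{p^{2}}$, matching \prettyref{thm:triangle-planar}.

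The hard part will be step (ii): exhibiting a \emph{planar} pair with matched degrees whose two‑vertex type multiset differs in only $O(1)$ entries yet whose triangle counts differ by $\asymp\ell$. The naive degree‑preserving edge switch on the book (replacing the spine $\{a,b\}$ by two pendants while adding a matching edge) already fails here: deleting the spine changes the common‑neighbor count of all $\asymp\ell$ page pairs $\{a,w_i\},\{b,w_i\}$ simultaneously, so its level‑two discrepancy is $\asymp\ell$ and it only yields $\TV\asymp\ell p^{2}+\ell^{3}p^{3}$, which degrades the exponent from $7/3$ to $2$. Thus the real content is a gadget in which every page incidence is balanced — each page must witness a triangle of the \emph{same local type} in both $H$ and $H'$, so that flipping the one large‑common‑neighborhood configuration is invisible at level two except for the single hub pair — all while remaining planar (ruling out the independent‑set blow‑up used for general graphs in \prettyref{thm:neighborhood_main}, since that introduces non‑planar bicliques). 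The wedge–triangle identity above serves as a useful consistency check on any candidate construction, and the tail term $O(\ell^{3}p^{3})$ is what ultimately caps the usable hub degree at $\ell\asymp p^{-1/3}$ under planarity.
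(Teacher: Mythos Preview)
Your approach is essentially the paper's. For the second statement you chose exactly the same pair (the book $B_\ell$ versus $K_{2,\ell}$) and the same coupling argument: if neither hub is sampled the spine is invisible under neighborhood sampling, so $\TV\le 1-q^{2}=O(p)$. For the first statement your three-step skeleton---matching degree sequences to kill the $j=1$ term, arranging $\sum_{\sfv_b(h)=2}|\N(h,H)-\N(h,H')|=O(1)$ so the $j=2$ term is $O(p^{2})$, and absorbing $j\ge 3$ into $\Pr[\Binom(v,p)\ge 3]=O(\ell^{3}p^{3})$ via \prettyref{lmm:subgraph_bound}---is precisely the paper's route, and your derivation of $|\sfw(H)-\sfw(H')|=3|\sft(H)-\sft(H')|$ from $\n(P_3,\cdot)$ being a degree-sequence functional is the same observation the paper makes.

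The one genuine gap is that you stop at the specification of the gadget without producing it. You are right that the naive spine-switch fails at level two, and your list of constraints (planar, matched degrees, $O(1)$ discrepancy in two-vertex types, a single hub pair of degree $\asymp\ell$ carrying $\asymp\ell$ common neighbors) is exactly what is needed; but the lemma is not proved until such a pair is exhibited. The paper does this by an explicit tabulated construction: $H$ and $H'$ are each disjoint unions of a small, carefully chosen list of planar components (one ``large'' piece of maximum degree $\ell+1$ together with $\Theta(\ell)$ copies of fixed small graphs), balanced so that the connected two-black-vertex counts differ only in $O(1)$ places. To complete your argument you need to supply a concrete pair of this kind and verify the level-two bookkeeping directly; the rest of your proof then goes through verbatim.
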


\begin{proof}
The proof follows from an examination of the two graphs below. Note that $ \N(h, H) = \N(h, H') $ for all connected $ h $ with $ \sfv_b(h) = 1 $ and since $ |\N(h, H) - \N(h, H')| = O(1) $ for all connected $ h $ with $ \sfv_b(h) = 2 $, it follows from \prettyref{lmm:subgraph_bound} with $ k = 2 $ that $ |\N(h, H) - \N(h, H')| = O(1) $ for all $ h $ with $ \sfv_b(h) = 2 $. Thus,
\begin{equation*}
\TV(P_{\tH}, P_{\tH'}) = \sum_h | \N(h, H) - \N(h, H')|p^{\sfv_b(h)}q^{v - \sfv_b(h)} = O(p^2 + \sum_{k=3}^{\infty} \ell^kp^k) = O(p^2 + p^3\ell^3),
\end{equation*}
provided $ p = O(1/\ell) $.
The identity $ |\sfw(H) - \sfw(H')| = 3|\sft(H) - \sft(H')| = \ell-2 $ follows from the fact that $ H $ and $ H' $ have matching degree sequences (corresponding to matching subgraphs from neighborhood sampling with one vertex).

\begin{table} [ht]
\centering
\begin{minipage}{.4\linewidth}
\centering
\caption{The graph $ H $ with $ \ell = 5 $ and $ \sfd(H) = \ell + 1 = 6 $}
\begin{tabular}
{c|c} \hline Copies & Components \\
\hline 1 & \includegraphics[width=0.15\columnwidth]{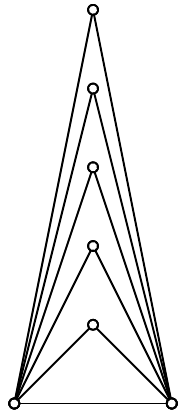} \\
\hline 2 & \includegraphics[width=0.5\columnwidth]{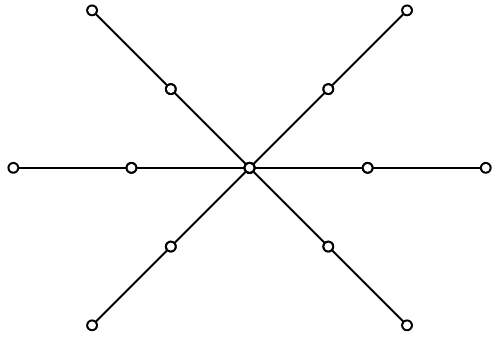} \\
\hline 2 & \includegraphics[width=0.3\columnwidth]{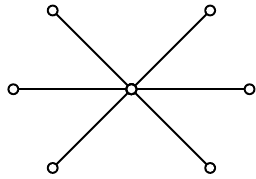} \\
\hline $ \frac{\ell+1}{3} $ & \includegraphics[width=0.2\columnwidth]{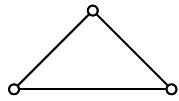} \\
\hline $ 2(\ell+1) $ & \includegraphics[width=0.45\columnwidth]{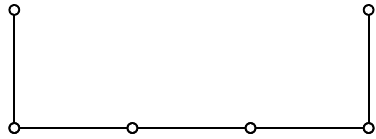} \\
\hline
\end{tabular}
\end{minipage} %
\qquad
\begin{minipage}{.4\linewidth}
\centering
\caption{The graph $ H' $ with $ \ell = 5 $ and $ \sfd(H') = \ell + 1 = 6 $}
\begin{tabular}
{c|c} \hline Copies & Components \\
\hline 1 & \includegraphics[width=0.35\columnwidth]{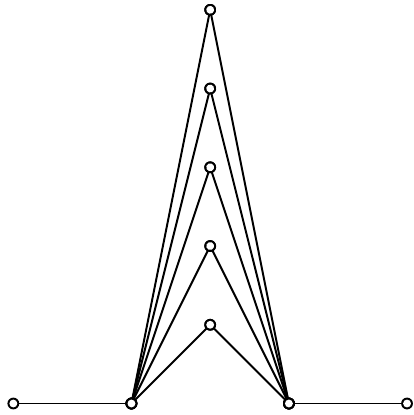} \\
\hline 2 & \includegraphics[width=0.25\columnwidth]{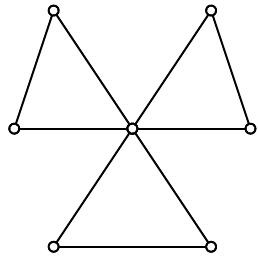} \\
\hline 1 & \includegraphics[width=0.45\columnwidth]{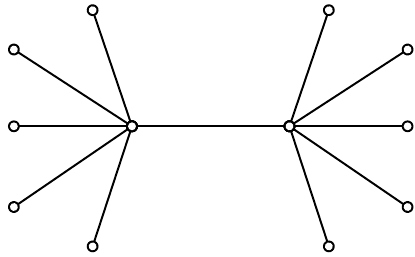} \\
\hline $ \ell+1 $ & \includegraphics[width=0.3\columnwidth]{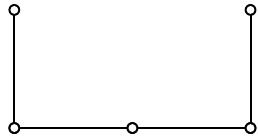} \\
\hline $ 2(\ell+1) $ & \includegraphics[width=0.2\columnwidth]{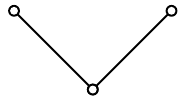} \\
\hline
\end{tabular}
\end{minipage}
\end{table}

For the second statement, consider two planar graphs $H$ and $H'$ on $\ell+2$ vertices, where $H$ consists of $\ell$ triangles sharing a common edge, and $H$ consists of $\ell$ wedges sharing a pair of non-adjacent vertices; see \prettyref{fig:trianglewedge} for an illustration for $\ell=5$.
\begin{figure}[ht]%
\centering
\includegraphics[width=0.1\columnwidth]{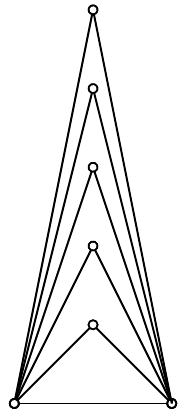}%
~~~~~
	\includegraphics[width=.1\columnwidth]{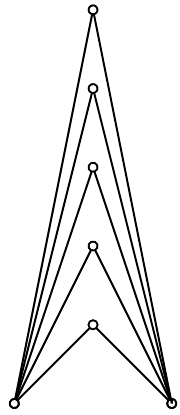}
	\caption{Example of $H$ and $ H' $ with $ \ell = 5 $}%
	\label{fig:trianglewedge}
\end{figure}
Note that if neither of the two highest-degree vertices in each graph (degree $ \ell+1 $ in $ H $ and degree $ \ell $ in $ H' $) are sampled and all incident edges removed, the two graphs are isomorphic. This shows that $ \TV(P_{\tH}, P_{\tH'}) \leq 1-q^2 = O(p) $. Also, note that $ \sft(H)=\ell$ and $\sft(H') = 0$.
\end{proof}


\section{Additional proofs} \label{app:additional_proofs}

\begin{proof}[Proof of Theorems \ref{thm:subgraph-rates-forest}, \ref{thm:wedge-rates-forest}, and \ref{thm:triangle-planar-subgraph}]
The upper bounds are achieved by Horvitz-Thompson estimation as in \prettyref{thm:subgraph-main}. However, for \prettyref{thm:wedge-rates-forest}, we are able to achieve a smaller variance because $ \n(\Bowtie, G) $ is of order $ td $ for planar $ G $ instead of $ td^2 $ and hence $ \Var[\widehat{\sft}_{\sf{HT}}] \lesssim \frac{\n(\Triangle, G))}{p^3} + \frac{\n(\Diamond, G)}{p^2} + \frac{\n(\Bowtie, G)}{p} \lesssim \frac{t}{p^3} + \frac{td}{p^2} + \frac{td}{p} \asymp \frac{t}{p^3} \vee \frac{td}{p^2}  $. For the lower bound, the proof follows the same lines as \prettyref{sec:lb-subgraph} in that we use two different constructions depending on whether $ p \leq 1/d $ or $ p > 1/d $. \\

For edges, let $ H = S_{\ell} $ and $ H' = (\ell+1)S_1 $ with $ \ell = c(d \wedge m) $ for some small constant $ c > 0 $. Then $ \TV(P_{\tH}, P_{\tH'}) \leq p(1-q^{\ell}) \leq p \wedge (\ell p^2) $. \\

For wedges, when $ p \leq 1/d $, let $ H = P_4 + K_1 $ and $ H' = P_3 + P_2 $. Then $ \TV(P_{\tH}, P_{\tH'}) \leq O(p^3) $. When $ p > 1/d $, let $ H = S_{\ell} $ and $ H' = (\ell+1)K_1 $. Then $ \TV(P_{\tH}, P_{\tH'}) \leq p $. Finally set $ \ell = c(d \wedge w) $ for some universal constant $ c > 0 $. \\

Finally, for triangles, let $ H $ be the graph which consists of $ \ell $ triangles that share the same edge plus $ \ell $ isolated vertices. Let $ H' $ be the graph which consists of two $ S_{\ell} $ star graphs with an edge between their roots. Choose $ \ell = c(d \wedge t) $ for some small universal constant $ c > 0 $. Then $ \TV(P_{\tH}, P_{\tH'}) \leq p^2(1-q^{\ell}) \leq p^2 \wedge (p^3 \ell ) $.
\end{proof}

\begin{proof}[Proof of \prettyref{thm:edge-forest}]
Let $ (w_1, \dots, w_{k+1}) $ and $ (w'_1, \dots, w'_{k+1}) $ be two sequences of integers defined by 
$w_x=\max\{\alpha_x,0\}$ and $w_x'=\max\{-\alpha_x,0\}$, where $ (\alpha_1, \dots, \alpha_{k+1}) $ is as in \prettyref{lmm:moments}. 
Consider the disjoint union of stars
\begin{equation*}
H \simeq \sum_{x=1}^{k+1} w_x S_{\ell x} + \ell\alpha S_1 \qquad \text{and} \qquad  H' \simeq \sum_{x=1}^{k+1} w'_x S_{\ell x} + \ell\alpha'  S_1,
\end{equation*}
for integer $ \ell > 1 $.

Note, for example, that $ \sfe(H) = \ell(\sum_{x=1}^{k+1} xw_x + \alpha) $ and $ \sfv(H) = \sfe(H) + \sum_{x=1}^{k+1} w_x + \ell\alpha = \sum_{x=1}^{k+1} (\ell x+1)w_x + 2\alpha \ell $. Thus, $ \sfe(H) \vee \sfe(H') \leq \ell a^k $ for some universal $ a > 0 $. Note that
\begin{equation*}
\sfe(H) - \sfe(H') = \ell(\alpha - \alpha') = \frac{\ell}{2}\left(\sum_{x=1}^{k+1} xw_x - \sum_{x=1}^{k+1} xw'_x\right) \geq \frac{\ell}{2},
\end{equation*}
and by \prettyref{lmm:tv-edge-forest} there exists universal $ A > 0 $ such that
\begin{equation*}
\TV(P_{\tH}, P_{\tH'}) = \frac{1}{2}\sum_h | \N(h, H) - \N(h, H') |p^{\sfv_b(h)}q^{v-\sfv_b(h)} = O(pA^k + (p\ell A^k)^k),
\end{equation*}
provided $ p\ell A^k < 1 $.

By \prettyref{thm:mainlb}, we have
\begin{equation*}
\inf_{\widehat{\sfe}}\sup_{\substack{G \in \calF: ~\sfd(G)\leq d\\ ~~~\sfe(G) \leq m}}\prob{|\widehat{\sfe}-\sfe(G)| \geq \Delta_{\ell} } \geq c.
\end{equation*}
where
\begin{align*}
\Delta_{\ell} & \gtrsim |\sfe(H) - \sfe(H')| \left(\sqrt{ \frac{m}{\sfe(H) \vee \sfe(H') \TV(P_{\tH}, P_{\tH'})}} \wedge \frac{m}{\sfe(H) \vee \sfe(H')} \right) \\ & \gtrsim \sqrt{ \frac{m \ell}{pc^k + (p\ell c^k)^{k}}} \wedge \frac{m}{c^k},
\end{align*}
for some universal constants $ c > 0 $ provided $ p\ell c^k < 1 $. Next, choose
\begin{equation}
\ell =
\begin{cases}
    \left(\frac{1}{pc^k}\right)^{1-1/k} \wedge \frac{m}{a^k} & \text{if } p > \left(\frac{1}{dc^k}\right)^{k/(k-1)} \\
    d \wedge \frac{m}{a^k} & \text{if } p \leq \left(\frac{1}{d^k}\right)^{k/(k-1)}
  \end{cases}.
\end{equation}
Taking $ k = \sqrt{\log\frac{1}{p}} $ yields the desired lower bound.
\end{proof}

\begin{proof}[Proof of \prettyref{thm:broken-forest}]
Let $ (w_1, \dots, w_{k+1}) $ and $ (w'_1, \dots, w'_{k+1}) $ be two sequences of integers defined by  $w_x=\max\{\beta_x,0\}$ and $w_x'=\max\{-\beta_x,0\}$, where $ (\beta_1, \dots, \beta_{k+1}) $ is as in \prettyref{lmm:moments}. Let 
\begin{equation*} 
H \simeq \sum_{x=1}^{k+1} w_x S_{\ell x} \qquad \text{and} \qquad H' \simeq \sum_{x=1}^{k+1} w'_x S_{\ell x},
\end{equation*}
for integer $ \ell > 1 $.
Note, for example, that $ \sfe(H) = \ell\sum_{x=1}^{k+1} xw_x $, $ \sfv(H) = \sum_{x=1}^{k+1} (\ell x+1)w_x $, and $ \sfw(H) =\sum_{x=1}^{k+1} \binom{\ell x}{2}w_x $. This means that $ \sfw(H) \vee \sfw(H') \leq \ell^2 a^{2k} $ for some universal $ a > 0 $. Note that
\begin{equation*}
\sfw(H) - \sfw(H') = \frac{\ell^2}{2}\left(\sum_{x=1}^{k+1} x^2w_x - \sum_{x=1}^{k+1} x^2w'_x\right) \geq \frac{\ell^2}{2}.
\end{equation*}
By \prettyref{lmm:tv-broken-forest}, we have that $\TV(P_{\tH}, P_{\tH'}) = O(pA^k + (p\ell A^k)^2 + (p\ell A^k)^k) $ for some universal $ A > 0 $.
By \prettyref{thm:mainlb}, we have
\begin{equation*}
\inf_{\widehat{\sfw}}\sup_{\substack{G \in \calF: ~\sfd(G)\leq d\\ ~~~\sfw(G) \leq w}}\prob{ |\widehat{\sfw}-\sfw(G)| \geq \Delta_{\ell} } \geq c.
\end{equation*}
where
\begin{align*}
\Delta_{\ell} & \gtrsim |\sfw(H) - \sfw(H')|\left(\sqrt{ \frac{w}{\sfw(H) \vee \sfw(H') \TV(P_{\tH}, P_{\tH'})}}  \wedge \frac{w}{\sfw(H) \vee \sfw(H')}\right) \\
& \gtrsim \sqrt{ \frac{w\ell^2}{pc^k + (p\ell c^k)^2 + (p\ell c^k)^{k}}}  \wedge \frac{w}{c^k},
\end{align*}
for some universal constant $ c > 0 $.
Next, choose $ k = 2 $ and $ \ell = c(d \wedge w^{1/2}) $ when $ p \leq 1/d $ for some universal constant $ c > 0 $. For $ p > 1/d $ and $ w \geq d $, we use \prettyref{lmm:lower-star} with $ A = B = 1 $ and $ \Delta = cd $.  Then $ \sfw(H) \asymp \sfw(H') \asymp d^2 $ and $ |\sfw(H) - \sfw(H')| \asymp d\sqrt{\frac{d}{p}} $, and $ \TV(P_{\tH}, P_{\tH'}) < c < 1 $. By \prettyref{thm:mainlb}, we have $ \inf_{\widehat{\sfw}}\sup_{\substack{G \in \calF: ~\sfd(G)\leq d\\ ~~~\sfw(G) \leq w}}\Expect_G|\widehat{\sfw}-\sfw(G)|^2 \gtrsim \frac{wd}{p} $.
\end{proof}

\begin{proof}[Proof of \prettyref{thm:triangle-planar}]
Let $ R $ denote the minimax risk. The bound $ R \lesssim \frac{td}{p^2} $ follows immediately from \prettyref{thm:neighborhood_main} with $ \omega = 3 $. For the other regimes, we modify the estimator \prettyref{eq:neighborhood-est} from \prettyref{thm:neighborhood_clique}. To accomplish this, observe that $ \n(\Bowtie, G) $ is of order $ td $ for planar $ G $, since the number of triangles that share a common vertex is at most $ d $. Choosing $ \alpha = \frac{1}{2qp^2} $ so that, in the notation of the proof of \prettyref{thm:neighborhood_clique},  $ c_1-1 \asymp \frac{1}{p} $ and $ c_2 - 1 = p^{-2} \qth{2\alpha^2q p^5 +  \pth{ 1 - 2q \alpha p^2}^2} \asymp \frac{1}{p} $, we have $ \Var[\widehat{s}] \lesssim \frac{t}{p^3} \vee \frac{td}{p} $. This yields the bound $ R \lesssim \frac{t}{p^3} \vee \frac{td}{p} $. Thus, $ R \lesssim \left(\frac{t}{p^3} \vee \frac{td}{p}\right) \wedge \frac{td}{p^2} = \left( \frac{t}{p^3} \wedge \frac{td}{p^2} \right)\vee \frac{td}{p} $.
For the lower bound, consider two cases:
\paragraph{Case I: $ p \leq 1/d $.} By \prettyref{lmm:lower-planar}, there exists two planar graphs $ H $ and $ H' $ on order $ \ell $ vertices such that $ \TV(P_{\tH}, P_{\tH'}) = O(p^2 + p^3\ell^3) $ and $ \sft(H) \asymp \sft(H') \asymp |\sft(H) - \sft(H')| \asymp \ell $ provided $ p = O(1/\ell) $. We choose $ \ell = p^{-1/3} \wedge t $ if $ p > 1/d^3 $. Otherwise, if $ p \leq 1/d^3 $, we choose $ \ell = d \wedge t $. By \prettyref{thm:mainlb}, this produces a lower bound of $ R \gtrsim \left( \frac{t}{p^{7/3}} \wedge \frac{td}{p^2} \right) \wedge t^2 $. 

\paragraph{Case II: $ p > 1/d $.}  We use the second statement of \prettyref{lmm:lower-planar} which guarantees the existence of two planar graphs $ H $ and $ H' $ on order $ \ell $ vertices such that $ \TV(P_{\tH}, P_{\tH'}) = O(p) $ and $ \sft(H) \asymp |\sft(H) - \sft(H')| \asymp \ell $. Choosing $ \ell = d \wedge t $ yields the lower bound $ R \gtrsim \frac{td}{p} \wedge t^2 $. 
\end{proof}

\begin{proof}[Proof of \prettyref{thm:adaptive}]
To make $ \widehat{\sfe} $ unbiased, in view of \prettyref{eq:emean}, we set
\begin{equation*}
1 = \Expect [\mathcal{K}_{A}] = pq(f(d_{u}) + f(d_{v}))+ p^2g(d_{u},d_{v}).
\end{equation*}
This determines
\begin{equation*}
g(d_{u},d_{v}) = \frac{1-pq(f(d_{u}) + f(d_{v}))}{p^2}.
\end{equation*}
An easy calculation shows that
\begin{equation*}
\Var[\mathcal{K}_{A}] = \frac{(1-pq(f(d_{u}) + f(d_{v})))^2}{p^2} + pq(f^2(d_{u}) + f^2(d_{v})) - 1
\end{equation*}
and if $ A = \{u, w\} $ and $ A' = \{w, v\} $ in $ G $, then
\begin{equation*}
\Cov[\mathcal{K}_{A}, \mathcal{K}_{A'}] = \frac{q}{p}(1-pf(d_{u}))(1-pf(d_{v})).
\end{equation*}
Otherwise, $\Cov[\mathcal{K}_{A}, \mathcal{K}_{A'}] = 0$ if $ A $ and $ A' $ do not intersect.
Thus,
\begin{align}
\Var[\widehat{\sfe}] & = \frac{q}{p}\sum_{u \neq v}d_{u v}(1-pf(d_{u}))(1-pf(d_{v})) \nonumber \\ & \qquad  +\sum_{\{u, v\} \in E(G) } \left[\frac{(1-pq(f(d_{u}) + f(d_{v})))^2}{p^2} + pq(f^2(d_{u}) + f^2(d_{v})) - 1\right], \label{eq:risk}
\end{align}
where $ d_{u v} $ denotes the cardinality of $ N_G(u) \cap N_G(v) $.
To gain a better idea for how to choose $ f $, we first suppose that $ f \equiv \alpha $.
Thus, \prettyref{eq:risk} reduces to the mean square error of \prettyref{eq:edge-estimator} or
\begin{equation*}
\Var[\widehat{\sfe}] = \frac{2q}{p}\n(P_3, G)(1-p\alpha)^2+ \sfe(G)\frac{q}{p^2}(1+p(1-2\alpha((p-2)p\alpha+2)))
\end{equation*}
Next, let us minimize the above expression over all $ \alpha $. Doing so with 
\begin{equation*}
\alpha' = \left( \frac{1}{p}\right)\frac{p\sfe(G)+p\n(P_3, G)}{p\n(P_3, G)+\sfe(G)(2-p)} + \left( \frac{1}{2p}\right)\frac{2q\sfe(G)}{p\n(P_3, G)+\sfe(G)(2-p)}.
\end{equation*}
yields
\begin{equation} \label{eq:minimum}
\Var[\widehat{\sfe}] = \frac{q^2}{p}\frac{\sfe(G)(\sfe(G)+\n(P_3, G))}{(2-p)\sfe(G)+p\n(P_3, G)}.
\end{equation}
Note that $ \alpha' $ is a convex combination of $ \frac{1}{p} $ and $ \frac{1}{2p} $. These are the values that yield the risk bound for the non-adaptive estimator \prettyref{eq:edge-estimator} in \prettyref{thm:edge}, viz.,
\begin{equation*}
\alpha = \left( \frac{1}{p}\right)\Indc\left\{ d > \frac{1}{p} \right\} + \left( \frac{1}{2p}\right)\Indc \left\{ d \leq \frac{1}{p} \right\}.
\end{equation*}
Of course, this choice of $ \alpha' $ is not feasible since it depends on the unknown quantities $ \sfe(G) $ and $ \n(P_3, G) $. However, noting that $ \sfe(G) = \sum_{u} d_{u}/2 $ and $ \n(P_3, G) = \sum_{u} \binom{d_{u}}{2} $ inspires us to define
\begin{align*}
f(d_{u}) & = \left( \frac{1}{p}\right)\frac{p(\frac{d_{u}}{2})+p\binom{d_{u}}{2}}{p\binom{d_{u}}{2}+(\frac{d_{u}}{2})(2-p)} + \left( \frac{1}{2p}\right)\frac{2q(\frac{d_{u}}{2})}{p\binom{d_{u}}{2}+(\frac{d_{u}}{2})(2-p)} \\
& = \left( \frac{1}{2p}\right)\frac{2pd_{u}}{p(d_{u}-1) +(2-p)} + \left( \frac{1}{2p}\right)\frac{2q}{p(d_{u}-1) +(2-p)} \\
& = \frac{pd_{u}+q}{p(pd_{u}+2q)}.
\end{align*}
With this choice of $ f $, we will verify that the variance and covariance terms in \prettyref{eq:risk} also yield the rate \prettyref{eq:edge-var}. Note that
\begin{align*} 
\frac{q}{p}\sum_{u \neq v}d_{u v}\left[\frac{q}{pd_u+2q}\right]\left[\frac{q}{pd_v+2q}\right]
\leq \frac{q}{p}\sum_{u \neq v}\frac{d_{u v}}{pd_u+2q}
\leq \frac{dq}{p}\sum_{u}\frac{d_u}{pd_u+2q}
& \\ \leq \frac{Ndq}{p}\frac{\sfe(G)}{p\sfe(G)+qN} 
\leq \frac{Nd}{p^2} \wedge \frac{\sfe(G)d}{p},
\end{align*}
where the second last inequality follows from the concavity of $ x \mapsto \frac{x}{px + 2q} $ for $ x \geq 0 $.
The variance term has the bound
\begin{equation*}
\sum_{\{u, v\} \in E(G) } \left[\frac{(1-pq(f(d_{u}) + f(d_{v})))^2}{p^2} + pq(f^2(d_{u}) + f^2(d_{v})) - 1\right] \lesssim \sfe(G) \left(\left( \frac{1}{p^2} \wedge d^2 \right) \vee \frac{1}{p} \right),
\end{equation*}
which follows from $ \frac{(1-pq(f(d_{u}) + f(d_{v})))^2}{p^2}  \lesssim \frac{1}{p^2} \wedge (d^2_{u} + d^2_{v} ) $ and $ pq(f^2(d_{u}) + f^2(d_{v})) \lesssim \frac{1}{p} $.
\end{proof}

\section{Neighborhood sampling without colors}
\label{app:nocolor}
In this appendix we demonstrate the usefulness of the color information (namely, which vertices are sampled) in neighborhood sampling by showing that without observing the colors, the performance guarantees in \prettyref{thm:edge} are no longer unattainable in certain regimes.

\begin{theorem} \label{thm:unlabelled}
Let $ \calF $ denote the collection of all forests. Consider the neighborhood sampling model without observing the colors $ \{ b_v : v\in V \} $. Then
\begin{equation} \label{eq:nolabel}
\inf_{\widehat{\sfe}}\sup_{\substack{G \in \calF: ~\sfd(G)\leq d\\ ~~~\sfe(G) \leq m}}\Expect_G|\widehat{\sfe}-\sfe(G)|^2 \gtrsim mp( d\wedge m).
\end{equation}
\end{theorem}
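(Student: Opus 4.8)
The plan is to feed a pair of forests into the general reduction \prettyref{thm:mainlb}, taking the additive invariant to be $f=\sfe$, exactly as in the colored forest lower bounds; the only genuinely new ingredient is that the total variation must now be evaluated between the \emph{color-forgotten} sampled graphs. The structural fact that makes this model strictly harder is the following ambiguity: once colors are erased, a star whose center is sampled (so that its full degree is revealed) looks identical to a star of larger degree whose center is \emph{not} sampled but several of whose leaves happen to be sampled. Thus an observed star of size $j$ is simultaneously consistent with a genuine $S_j$ --- a ``spike'' occurring with probability $\asymp p$ --- and with a partial view $\Binom(\deg,p)$ of a much larger star --- the ``bulk''. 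I would exploit this by building $H$ and $H'$ from disjoint unions of stars $\sum_x w_x S_{\ell x}$ and $\sum_x w_x' S_{\ell x}$, whose multiplicities are the moment-matching weights of \prettyref{lmm:moments}, so that $\sfe(H)-\sfe(H')\asymp \ell$ while all low-order observed star statistics agree; these are forests of maximum degree $\asymp\ell$.

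First I would reduce the bookkeeping. Because color-erasure is a deterministic map, data processing gives $\TV(P_{\tH},P_{\tH'})\le$ (colored $\TV$), so the estimate $O(pA^k+(p\ell A^k)^k)$ from \prettyref{lmm:tv-edge-forest} is available for free, and the same weights make the \emph{uncolored} neighborhood counts $\N(\cdot)$ agree to the required order so that \prettyref{lmm:subgraph_bound} controls the disconnected contributions. I would then split into the usual regimes. When $p$ is small relative to $1/d$, the degree budget is binding, and a two-point argument already suffices: taking $H=S_n$ versus $H'=S_{n-\Gamma}$ (or $H'=(n{+}1)K_1$) with $n\le d$ and $\Gamma$ chosen so that $\TV\lesssim p+\Gamma\sqrt{p/n}\le 1/300$, one gets per-component edge count $m'\asymp n$ and, after amplifying to $M=m/m'$ copies, a lower bound of order $\tfrac{m}{p}\wedge m^2$, which dominates $mp(d\wedge m)$ throughout $p\lesssim 1/\sqrt d$. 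For intermediate $p$ the moment-matched star construction with $\ell\asymp d$ (constant $k$ small enough that $p\ell A^k<1$) gives gap and per-component size both $\asymp d$, constant $\TV$, and hence a bound of order $m(d\wedge m)$.

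The step I expect to be the crux is the high-degree portion of the range $p>1/\sqrt d$ --- which is exactly the range in which color information is most valuable and in which the gap to the bound of \prettyref{thm:edge} genuinely opens up. The obstacle is that a sampled center of a degree-$\asymp d$ vertex contributes a \emph{point mass} at its true degree with probability $\asymp p$, and this spike is not smoothed by the sampling. Matching the spikes across $H$ and $H'$ forces either equal degree multiplicities (which kills the edge gap) or a moment-matching order so large that the aggregate spike weight $pA^k$ in the $\TV$ bound blows up; at the same time the bulk $\Binom(\deg,p)$ only pins each degree down to resolution $\sqrt{d/p}$, so a single tree component seems unable to hide more than $\sqrt{d/p}$ edges while keeping $\TV$ below $1/300$. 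Overcoming this --- by isolating the spike events and bounding their total probability via a union bound of the $\prob{\Binom(v,p)\ge k+1}$ type, while engineering a forest whose observations are sufficiently correlated (within trees, in lieu of the complete-bipartite gadget of \prettyref{lmm:lower-star}) that the residual bulk laws coincide up to a constant --- is the technical heart of the argument and is what should ultimately pin the exponent at $mp(d\wedge m)$ rather than the stronger colored rate.
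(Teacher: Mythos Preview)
Your approach via the two-point reduction of \prettyref{thm:mainlb} differs fundamentally from the paper's, and there is a genuine gap that you have yourself identified but not closed. The regime $p > d^{-1/3}$ (with $m\ge d$) is precisely where the uncolored bound $mp\,d$ exceeds the colored bound $\frac{m}{p^2}\wedge\frac{md}{p}$ from \prettyref{thm:edge}; your data-processing step, which upper-bounds the uncolored $\TV$ by the colored one from \prettyref{lmm:tv-edge-forest}, therefore cannot yield anything stronger than the colored lower bound and is powerless in exactly this range. More structurally, the two-hypothesis route is blocked: for a disjoint union of stars, sampling any center (probability $\asymp p$) reveals its exact degree, so any pair $H,H'$ with differing edge counts must have differing degree multisets, and the resulting point masses force $\TV(P_{\tH},P_{\tH'}) \gtrsim p$. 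Hence for $p$ bounded away from zero the hypothesis $\TV\le 1/300$ of \prettyref{thm:mainlb} cannot be met, and no amount of moment matching on the binomial ``bulk'' repairs this. Your final paragraph acknowledges the obstruction without resolving it.

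The paper sidesteps the two-point bottleneck entirely. It restricts to the product family $G_{\underline\theta}=S_{\theta_1}+\cdots+S_{\theta_M}$ with $\theta_i\in[k]$, $k=d\wedge m$, $M=m/k$, so that $\sfe(G_{\underline\theta})=\|\underline\theta\|_1$ and the uncolored observation decouples into independent coordinates $X_i\sim p\,\delta_{\theta_i}+q\,\Binom(\theta_i,p)$. The minimax theorem (over product priors) then tensorizes the risk to $M$ times the one-dimensional minimax risk for estimating $\theta\in[k]$ from a single $X\sim p\,\delta_\theta + q\,\Binom(\theta,p)$. That scalar problem is handled by an elementary argument (\prettyref{lmm:mixbin}): writing $\Expect_\theta(\theta-g(X))^2 = p\,(\theta-g(\theta))^2 + q\,\Expect_{\Binom}(\theta-g(X))^2$, the first term forces $\|g-\id\|_\infty^2\lesssim R/p$, while the second term is at least $\tfrac12\Expect_{\Binom}(\theta-X)^2-\|g-\id\|_\infty^2 \asymp q^2\theta^2-\|g-\id\|_\infty^2$; combining gives $R\gtrsim p\,k^2$. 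Multiplying by $M=m/k$ yields $mp(d\wedge m)$ uniformly in $p$. The point is that the spike--bulk tension you diagnosed is exploited \emph{within a single coordinate} via a Bayes-risk argument, rather than across two fixed graphs where it only obstructs.
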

\begin{proof}

Let $ M = m/k $, where $ k = d \wedge m $ and set $ \mathcal{F}_0 = \{ G_{\underline{\theta}} : G_{\underline{\theta}} = S_{\theta_1} + \dots + S_{\theta_M}, \; \underline{\theta} = (\theta_1,\dots,\theta_M) \in [k]^M \} $. Note that for each $ \underline{\theta} \in [k]^M $, $ \sfe(G_{\underline{\theta}}) = \|\underline{\theta}\|_1 $. Thus, if $ \underline{X} = (X_1, \dots, X_M) $, where $ \{X_i\} $ are independent and $ X_i \sim p \delta_{\theta_i} + q \Binom(\theta_i,p) $ for $ i \in [M] $, then
\begin{equation*}
\inf_{\widehat{\sfe}}\sup_{\substack{G \in \calF: ~\sfd(G)\leq d\\ ~~~\sfe(G) \leq m}}\Expect_G|\widehat{\sfe}-\sfe(G)|^2 \geq \inf_{g}\sup_{ \underline{\theta} \in [d]^M }\Expect_{\underline{\theta}}|\|\underline{\theta}\|_1 - g(\underline{X})|^2.
\end{equation*}
By the minimax theorem,
\begin{align*}
\inf_{g}\sup_{ \underline{\theta} \in [k]^m }\Expect_{\underline{\theta}}|\|\underline{\theta}\|_1 - g(\underline{X})|^2 
& = \sup_{ \underline{\theta}\in \underline{\pi} }\inf_{g}\Expect_{\underline{\theta}}|\|\underline{\theta}\|_1 - g(\underline{X})|^2 = \sup_{ \underline{\theta} \in \underline{\pi} }\Expect_{\underline{X}}\Expect_{\underline{\theta}| \underline{X}}|\|\underline{\theta}\|_1 - \Expect_{\underline{\theta}|\underline{X}} \|\underline{\theta}\|_1|^2 \\
& \geq \sup_{ \underline{\theta} \in \pi^{\otimes M} }\Expect_{\underline{X}}\Expect_{\underline{\theta}| \underline{X}}|\|\underline{\theta}\|_1 - \Expect_{\underline{\theta}|\underline{X}} \|\underline{\theta}\|_1|^2
 = M\sup_{ \theta \in \pi } \Expect_X\Expect_{\theta|X}|\theta - \Expect_{\theta|X}\theta|^2 \\
& =  M\inf_{g}\sup_{ \theta \in [d \wedge m] }\Expect_\theta|\theta - g(X)|^2
\asymp m\left(p k\vee \pth{ \frac{1}{p} \wedge k } \right) \\
& \gtrsim mp(d \wedge m),
\end{align*}
where $ X \sim \delta_{\theta} + q \Binom(\theta, p) $ and the second to last line follows from \prettyref{lmm:mixbin} below. 
\end{proof}

\begin{remark}
Note that when $ p > (1/d)^{1/3} $ and $ m \geq d $, the minimax lower bound \prettyref{eq:nolabel} is strictly greater than the minimax risk in \prettyref{thm:edge-forest}, thus confirming the intuition that the knowledge of which vertices are sampled provide useful information.
On the other hand, the Horvitz-Thompson estimator \prettyref{eq:HTn-edge} can be implemented without the color information and achieve the error bound 
$O(\frac{md}{p})$ in \prettyref{eq:varHT-n}. Comparing with \prettyref{thm:subgraph-main}, we conclude that neighborhood sampling is at least as informative as subgraph sampling, even if the colors are not observed. This is intuitive because neighborhood sampling reveals more edges from the parent graph.
\end{remark}

\begin{lemma}
\label{lmm:mixbin}	
	Given $\theta\in [k]$, let $X$ be distributed according to $p \delta_\theta + q \Binom(\theta,p)$. Assume that $p\leq 1/2$. Then
\begin{equation}
\inf_g \sup_{\theta \in [k]} \Expect_\theta[|\theta-g(X)|^2] \asymp p k^2 \vee \pth{ \frac{k}{p} \wedge k^2 }.
\label{eq:mixbin}
\end{equation}	
Moreover, the minimax rate is achieved by the estimator $ \widehat{g}(X) = k\wedge \frac{X}{p} $.
\end{lemma}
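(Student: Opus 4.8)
The plan is to prove matching upper and lower bounds on the minimax risk in \eqref{eq:mixbin}. For the upper bound I would analyze the proposed estimator $\widehat{g}(X)=k\wedge X/p$ directly, splitting according to the two components of the mixture $p\delta_\theta+q\Binom(\theta,p)$. Writing $X=\theta$ with probability $p$ and $X=Y\sim\Binom(\theta,p)$ with probability $q$, note first that on the atom $k\wedge\theta/p\ge\theta$, so the error is $(k\wedge\theta/p)-\theta$, which equals $\theta q/p$ when $\theta\le pk$ and $k-\theta$ otherwise; in both cases it is at most $kq$, so the atom contributes at most $p(kq)^2\le pk^2$. On the binomial component I would use the pointwise bound $(\theta-(k\wedge Y/p))^2\le(\theta-Y/p)^2$ (valid since $\theta\le k$), whence this term is at most $q\,\Expect[(\theta-Y/p)^2]=q\cdot\theta q/p\le k/p$, and also trivially at most $k^2$. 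Together these give risk $\lesssim pk^2+(k/p\wedge k^2)\asymp pk^2\vee(k/p\wedge k^2)$, which is the claimed rate.

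For the lower bound I would combine two constructions. The ``estimation'' term $k/p\wedge k^2$ I would obtain by a two-point (Le Cam) argument: take $\theta_0=k$ and $\theta_1=k-\Delta$ with $\Delta\asymp\sqrt{k/p}\wedge k$, for which the two binomials satisfy $\TV(\Binom(\theta_0,p),\Binom(\theta_1,p))\le\frac14$ by the standard binomial two-point estimate. Since the two mixture atoms sit at distinct points and carry mass $p$ each, they inflate the total variation only by at most $p\le\frac12$, so $\TV(P_{\theta_0},P_{\theta_1})$ stays bounded away from one and Le Cam yields risk $\gtrsim\Delta^2\asymp k/p\wedge k^2$. (Equivalently, revealing the latent component indicator only makes the problem easier and reduces it to estimating $\theta\in[k]$ from $Y\sim\Binom(\theta,p)$, whose minimax risk is the same.)

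The interesting term is $pk^2$, dominant when $p\ge1/\sqrt{k}$, and here the mechanism is not estimation error but confusion between the atom and the bulk of a far-away binomial. I would lower bound the minimax risk by the Bayes risk $\Expect[\Var(\theta\mid X)]$ under the prior $\pi=\tfrac12\delta_k+\tfrac12\,\mathrm{Unif}(S)$, where $S$ is a block of $s_0:=\max\{1,\lceil p^{3/2}\sqrt{k}\rceil\}$ consecutive integers centred at the binomial mean $pk$. For $x\in S$ the posterior of $\theta$ given $X=x$ is, up to negligible terms, supported on $\{x,k\}$: the value $X=x$ arises either from $\theta=x$ through the atom (unnormalised weight $a_x\asymp p/s_0$) or from $\theta=k$ through the binomial bulk (weight $b_x\asymp q\,\Binom(k,p)(x)\asymp 1/\sqrt{kp}$, by a local limit estimate for the binomial near its mean). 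Hence $\Prob[X=x]\,\Var(\theta\mid X=x)\asymp\frac{a_xb_x}{a_x+b_x}(k-x)^2\asymp k^2\min(a_x,b_x)$, and the choice of $s_0$ is exactly what forces $a_x\le b_x$, so summing the $s_0$ nearly equal terms over $S$ gives Bayes risk $\gtrsim s_0\cdot\frac{p}{s_0}\cdot k^2\asymp pk^2$. Taking the maximum of the two lower bounds, and noting that for $p\le1/k$ the term $k/p\wedge k^2$ already equals $k^2$, yields the matching lower bound across all $p\le\frac12$.

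I expect the main obstacle to be this last step. A naive two-point or two-prior testing argument fails: any separation large enough to matter makes the relevant binomials statistically distinguishable (the separation is forced to be $\lesssim\sqrt{k/p}$), so pure testing only reproduces the $k/p$ term and never reaches $pk^2$ when $p\ge1/\sqrt{k}$. The $pk^2$ rate is a genuinely global, bias-type phenomenon created by the moving point mass, and it must be extracted through the posterior-variance computation above rather than through total-variation alone. The delicate points there are the local limit estimate $\Binom(k,p)(x)\asymp1/\sqrt{kpq}$ on the block $S$ (which requires $kp\to\infty$, guaranteed by $p\ge1/\sqrt{k}$), the verification that no third value of $\theta$ carries non-negligible posterior mass at $x\in S$, and checking that the single choice $s_0=\max\{1,\lceil p^{3/2}\sqrt{k}\rceil\}$ works uniformly across the two sub-regimes $p\lessgtr k^{-1/3}$.
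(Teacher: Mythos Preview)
Your proposal is correct. The upper bound via direct analysis of $\widehat g(X)=k\wedge X/p$ is fine (and in fact more explicit than the paper, which obtains the upper bound abstractly via the minimax theorem without verifying the specific estimator). The $k/p\wedge k^2$ lower bound via reduction to the pure binomial problem is essentially what the paper does as well.

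The genuine divergence is in the $pk^2$ lower bound. Your route is Bayesian: you build the prior $\tfrac12\delta_k+\tfrac12\mathrm{Unif}(S)$ with $|S|\asymp \max\{1,p^{3/2}\sqrt{k}\}$ around $pk$, invoke a local limit estimate for $\Binom(k,p)$ on $S$, and read off the posterior variance. The paper instead uses a short non-Bayesian trade-off. For any $g:\{0,\dots,k\}\to[0,k]$, set $L=\|\id-g\|_\infty$. The atom gives
\[
\sup_{\theta}\Expect_\theta[(\theta-g(X))^2]\ \ge\ p\,L^2,
\]
while from $(\theta-g(X))^2\ge \tfrac12(\theta-X)^2-(X-g(X))^2$ and $\theta=k$, the binomial part gives
\[
\sup_{\theta}\Expect_\theta[(\theta-g(X))^2]\ \ge\ q\Big(\tfrac12\Expect_{X\sim\Binom(k,p)}(k-X)^2-L^2\Big)=q\Big(\tfrac12(q^2k^2+pqk)-L^2\Big).
\]
Taking the convex combination with weights $q$ and $p$ eliminates $L^2$ and yields $R\ge \tfrac{pq}{2}(q^2k^2+pqk)\asymp pk^2$. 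So the paper reaches $pk^2$ in two lines, with no prior construction, no local CLT, and no case analysis in $p$. What your approach buys is an explicit least-favorable prior and a transparent mechanism (the atom of a small $\theta$ colliding with the bulk of $\Binom(k,p)$), at the cost of the technical overhead you already flagged (local limit bounds, ruling out third posterior modes, and the sub-regime split at $p\asymp k^{-1/3}$).
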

\begin{proof}
	Denote the minimax risk by $R$.
	Let $\id$ denote the identity map.
	Given  any estimator $g$, without loss of generality, we assume $g: \{0,\ldots,k\} \to [0,k]$.
	Since $\Expect_\theta[(\theta-g(X))^2] = p (\theta-g(\theta))^2 +  q \Expect_{X\sim\Binom(\theta,p)}[(\theta-g(X))^2]$, we have
	\begin{equation}
	\sup_{\theta \in [k]} \Expect_\theta[|\theta-g(X)|^2] \geq p \|\id-g\|_\infty^2.
	\label{eq:lb1}
	\end{equation}
	Also,
	$(\theta-g(X))^2
	\geq - (X-g(X))^2 + (\theta-X)^2/2$, and hence
	\[
	\Expect_{X\sim\Binom(\theta,p)}[(\theta-g(X))^2] 
	\geq   - \|\id-g\|_\infty^2 +  \frac{1}{2} (q^2\theta^2+pq\theta).
	\]
	Therefore
	\begin{equation}
	\sup_{\theta \in [k]} \Expect_\theta[|\theta-g(X)|^2] \geq - q \|\id-g\|_\infty^2 +  \frac{q}{2} (q^2k ^2+pqk).	
	\label{eq:lb2}
	\end{equation}
	Combining \prettyref{eq:lb1} and \prettyref{eq:lb2}, we get
	\begin{equation}
	R \geq \frac{pq}{2} (q^2k ^2+pqk) \asymp pk^2.
	\label{eq:lb3}
	\end{equation}
	Next by the minimax theorem, 
	\[
	R = \sup_\pi \inf_g \Expect_\pi[|\theta-g(X)|^2] 
	= \sup_\pi \inf_g  \pth{p \underbrace{\Expect_{\theta\in\pi}[|\theta-g(\theta)|^2]}_{\in [0,k^2]}  + q \Expect_{\theta\in \pi,X\sim\Binom(\theta,p)} [|\theta-g(X)|^2]  }.
	\]
	We also know that
	\[
\sup_\pi \inf_g   \Expect_{\theta\in \pi,X\sim\Binom(\theta,p)}[|\theta-g(X)|^2]  
=  \inf_g \sup_{\theta \in [k]} \Expect_{\theta\in \pi,X\sim\Binom(\theta,p)}[|\theta-g(X)|^2]  \asymp \frac{k}{p} \wedge k^2. 
	\]
	Therefore we have
	\[
\frac{k}{p} \wedge k^2
	\lesssim R \lesssim pk^2 + \frac{k}{p} \wedge k^2.
	\]
	Combining with \prettyref{eq:lb3} yields the characterization \prettyref{eq:mixbin}.
\end{proof}

\section{Lower bounds for other motifs}
\label{app:othermotif}

\begin{theorem}[Wedges] \label{thm:broken}
For neighborhood sampling with sampling ratio $ p $,
\begin{equation*}
\inf_{\widehat{\sfw}}\sup_{\substack{G: ~\sfd(G)\leq d\\ ~~~\sfw(G) \leq w}}\Expect_G|\widehat{\sfw}-\sfw(G)|^2\asymp \frac{wd}{p^2} \wedge w^2.
\end{equation*}
\end{theorem}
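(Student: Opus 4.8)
The upper bound is immediate: the wedge $\Wedge$ is a connected motif on $k=3$ vertices, so the estimator \prettyref{eq:hest} obeys \prettyref{eq:var-general-graph}, which for $k=3$ gives $\Var_G[\widehat{\sfw}]=O(\sfw(G)d/p^2)=O(wd/p^2)$; since the trivial estimator $\widehat{\sfw}\equiv0$ has risk $\sfw(G)^2\leq w^2$, taking the infimum over estimators yields the upper bound $O(\frac{wd}{p^2}\wedge w^2)$. All the work is in the matching lower bound, for which I would invoke \prettyref{thm:mainlb} exactly as in the clique proof of \prettyref{thm:neighborhood_main}. The guiding principle is the identity $\s(\Wedge,G)=\sum_v\binom{d_v}{2}-3\sft(G)$: a single sampled vertex already reveals its degree (the connected neighborhood subgraphs with one black vertex are exactly the stars, so matching $\N$ up to $\sfv_b\leq1$ is the same as matching the degree sequence), hence the term $\sum_v\binom{d_v}{2}$ is ``free'' and the difficulty of distinguishing wedge counts is precisely that of distinguishing triangle counts among graphs of equal degree sequence. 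Consequently the wedge rate coincides with the $\omega=3$ clique rate $\frac{sd}{p^2}\wedge s^2$, and I would reuse the two-regime architecture of \prettyref{thm:neighborhood_main}.

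For $p\leq1/d$ I would take the ad hoc pair in \prettyref{eq:adhocHH2} (equivalently the $\omega=3$ instance of \prettyref{lmm:general_clique_neighborhood}), which share the degree sequence $(3,2,2,2,1)$ but have triangle counts $1$ and $0$, hence $|\s(\Wedge,H)-\s(\Wedge,H')|=3$ while their neighborhood subgraph counts agree for all $h$ with $\sfv_b(h)\leq1$. Blowing up every vertex into an independent set of size $\ell$ produces $F,F'$ whose degree sequences still match (a degree-$j$ vertex becomes $\ell$ vertices of degree $\ell j$), so by the neighborhood version of \prettyref{lmm:counts} we get $\TV(P_{\tF},P_{\tF'})=O((\ell p)^2)$; moreover $\sft(F)=\ell^3\sft(H)$, so $|\s(\Wedge,F)-\s(\Wedge,F')|=3\ell^3$ and $\sfw(F)\vee\sfw(F')\asymp\ell^3$. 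Choosing $\ell=c(d\wedge w^{1/3})$ and running \prettyref{thm:mainlb} with $M=w/\ell^3$ gives $R\gtrsim\frac{w\ell}{p^2}\wedge w^2\asymp\frac{wd}{p^2}\wedge w^2$, where the spurious $\frac{w^{4/3}}{p^2}$ term is absorbed exactly as in \prettyref{thm:neighborhood_main} using $p\leq1/d$.

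The regime $p>1/d$ is where the wedge problem departs from the triangle problem, and this is the step I expect to be the main obstacle. The clique construction of \prettyref{thm:neighborhood_main} joins an independent set to $K_{1/p,1/p}$ versus $(2/p)K_1$; for triangles this inflates nothing, but for wedges it manufactures $\Theta(\ell^2/p)$ extra wedges (any two non-adjacent vertices of the independent part sharing a common neighbor), which swamps the gap and ruins the bound. The fix I would use is to make the \emph{shared} part a clique rather than an independent set: setting $v_1=\lceil c/p\rceil$, compare
\begin{equation*}
H=K_\ell\vee(v_1 K_1)\qquad\text{and}\qquad H'=K_\ell\vee K_{v_1}=K_{\ell+v_1}.
\end{equation*}
Since any two vertices of $K_\ell$ are adjacent, they can never be the non-adjacent endpoints of an induced wedge, so the only wedges in $H$ have both endpoints inside the independent block and center in $K_\ell$, giving $\s(\Wedge,H)=\ell\binom{v_1}{2}\asymp\ell/p^2$ with no inflation, while $\s(\Wedge,H')=0$. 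The graphs share $G=K_\ell$ and differ only inside a block of $v_1\leq1/p$ vertices, so \prettyref{lmm:lower-complete} yields $\TV(P_{\tH},P_{\tH'})\leq1-q^{v_1}$, which I would push below $1/300$ by taking the constant $c$ small; the maximum degree is $\ell+v_1-1\lesssim\ell+1/p\lesssim d$, so $\ell$ may be taken as large as $\asymp d$.

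With this gadget the estimates are identical to the clique case: $|f(H)-f(H')|\asymp m\asymp\ell/p^2$ gives, via \prettyref{thm:mainlb}, $\Delta^2\asymp\frac{w\ell}{p^2}\wedge w^2$, and optimizing $\ell=c(d\wedge wp^2)$ subject to the degree constraint produces $R\gtrsim\frac{wd}{p^2}\wedge w^2$; for the residual very small-$w$ range where $\ell$ would fall below a constant I would fall back on the blow-up gadget of the previous case, which already delivers $w^2$ whenever $w\lesssim p^{-3}$. The one genuinely wedge-specific verification is the no-inflation computation $\s(\Wedge,K_\ell\vee(v_1 K_1))=\ell\binom{v_1}{2}$, i.e.\ the observation that replacing the independent shared set by a clique kills exactly the wedges that would otherwise dominate $m$; everything else is inherited verbatim from the proof of \prettyref{thm:neighborhood_main}.
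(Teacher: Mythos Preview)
Your proposal is correct and follows essentially the same two-regime architecture as the paper's proof: a degree-sequence-matched pair blown up by $\ell$ for $p\leq 1/d$, and a join $K_\ell\vee(\cdot)$ construction controlled by \prettyref{lmm:lower-complete} for $p>1/d$. The only differences are cosmetic choices of gadget---the paper uses $P_5$ versus $K_3+K_2$ in Case I (in place of your pair from \prettyref{eq:adhocHH2}) and $K_\ell\vee(K_{1/p}+K_{1/p})$ versus $K_\ell\vee K_{2/p}$ in Case II (in place of your $K_\ell\vee(v_1K_1)$ versus $K_{\ell+v_1}$)---and your write-up is in fact more careful about choosing the constant $c$ so that $\TV\leq 1/300$ and about the boundary case where $\ell$ drops below a constant.
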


\begin{proof}
For the lower bound, consider two cases:

\paragraph{Case I: $ p \leq 1/d $.}
Let $ h = P_5 $ and $ h' = K_3 + K_2 $. For each node in the original graph, we associate $ \ell $ distinct isolated vertices and connect each pair of vertices by an edge if and only if they were connected in the original graph. Call these expanded graphs $ H $ and $ H' $. Note that $ H $ and $ H' $ that have matching degree sequences $(2,2,2,1,1)$ and hence $ \TV(P_{\tH}, P_{\tH'}) = O(\ell^2p^2) $. Furthermore, $ \s(P_3, H) \asymp \s(P_3, H') \asymp |\s(P_3, H) - \s(P_3, H')| \asymp \ell^3 $. If $ \ell = c( d \wedge w^{1/3}) $, then by \prettyref{thm:mainlb} with $ M = w/\ell^3 $, $ \inf_{\widehat{\sfw}}\sup_{G\in\calG(w, d) }\Expect_G|\widehat{\sfw}-\sfw(G)|^2 \gtrsim \frac{w\ell}{p^2} \wedge w^2 \asymp \frac{wd}{p^2} \wedge w^2 $.

\paragraph{Case II: $ p > 1/d $.}
We use \prettyref{lmm:lower-complete} with $ G = K_{\ell} $, $ H_1 = K_{1/p} + K_{1/p} $, and $ H_2 = K_{2/p} $. This gives us two graphs $ H $ and $ H' $ with $ \s(P_3, H) = |\s(P_3, H) - \s(P_3, H')| \asymp \ell/p^2 $. 
 By \prettyref{thm:mainlb} with $ M = w/(\ell/p^2) $, $ \inf_{\widehat{\sfw}}\sup_{G\in\calG(w, d) }\Expect_G|\widehat{\sfw}-\sfw(G)|^2 \gtrsim \frac{w\ell}{p^2} \wedge w^2 $. Let $ \ell = cd $ if $ \frac{d}{p^2} \leq w $ and $ \ell = cp^2w $ if $ \frac{d}{p^2} > w $, for some small constant $ c $. In either case, we find that $\sfw(H) \leq w $, $ \sfw(H') \leq w $, and $ \inf_{\widehat{\sfw}}\sup_{G\in\calG(w, d) }\Expect_G|\widehat{\sfw}-\sfw(G)|^2 \asymp \frac{wd}{p^2} \wedge w^2 $.
\end{proof}

\paragraph{Lower bound for motifs of size four}

It remains to show that holds the result in \prettyref{thm:neighborhood_main} that holds for $K_4$, namely,
\begin{equation}
\inf_{\widehat{\s}} \sup_{\substack{G: ~\sfd(G)\leq d\\ ~~~\s(h,G) \leq s}} 
\Expect_G|\widehat{\s}-\s(K_{\omega}, G)|^2 = \Theta \left( \frac{s d}{p^3} \wedge  \frac{s d^2}{p^2} \wedge s^2 \right)
\label{eq:allfour}
\end{equation}
continues to hold for $h =\Square, \Paw, \Diamond$ and $\Claw$. For the case of $p< 1/d$, the construction for $K_4$ in \prettyref{eq:HH-nhbd4} works simultaneously for all motifs, because each motif is contained in one of $H$ and $H'$ and not the other. Next we consider the case of $p > 1/d$. The construction is ad hoc and similar to those in \prettyref{thm:subgraph-main} and \prettyref{thm:neighborhood_clique}.
\begin{itemize}
	\item 
For $h=\Claw$, we use the clique construction:
label the root as $v_1$ and the leaves as $v_2,v_3,v_4$. Define the graph $H$ as follows: Expand $v_1$ into a clique $S_1$ of size $\ell$, and for $i=2,3,4$, expand each $v_i$ into a clique $S_i$ of size $1/p$. Connect each pair of vertices $u_i\in S_i$ and $u_j\in S_j$ for $i\neq j$ if and only if $v_i$ and $v_j$ are connected in the motif $h$. This defines a graph $H$ on $\ell+3/p$ vertices. Repeat the same construction with $h$ replaced by \Paw, where the degree-one vertex is $v_1$. Note that if we remove the edges between the set of vertices $T\triangleq S_2\cup S_3\cup S_4$, for $H$ and $H'$ the resulting graph is isomorphic. Thus by Lemma \prettyref{lmm:lower-complete}, we have $\TV(P_{\tH},P_{\tH'}) \leq 1 - (1-p)^{3/p} \leq 0.9$ if $p \leq 1/2$. Furthermore, note that $\s(\Claw,H')=0$ and $\s(\Claw,H)=\ell/p^3$. Finally, taking $\ell = c(d \wedge \frac{s}{\ell/p^3})$ for some small constant $c$ and invoking \prettyref{thm:mainlb}, we obtain the desired lower bound $ \frac{s d}{p^3}  \wedge s^2 $ in \prettyref{eq:allfour}.

\item For $h=\Paw$, use the same construction as above with $H$ and $H'$ swapped.

\item For $h=\Diamond$, we repeat the clique construction of $H$ with $v_1$ being any of the degree-three vertices in $h$, and of $H'$ with $h'=\Kfour$; in other words, we simply have $H'=K_{\ell+3/p}$.

\item For $h=\Square$, we repeat the clique construction of $H$ with $v_1$ being any vertex in $h$, and of $H'$ with $h'=
\tikz[scale=0.75,baseline=(zero.base)]{\draw (0,0) node (zero) [vertexdot] {} -- (0,0.5) node[vertexdot] {} -- (0.5,0.5) node[vertexdot] {}; \draw (0.5,0) node[vertexdot] {};}$, with $v_1$ being the degree-two vertices.

%
%
%

\end{itemize}


\end{document}